\newtheorem{theorem}{Theorem}[section]
\newtheorem{lemma}[theorem]{Lemma}
\newtheorem{problem}[theorem]{Problem}
\begin{document}

\begin{frontmatter}

\title{Extremal trees, unicyclic and bicyclic graphs with respect to $p$-Sombor spectral radii}

\author{ Ruiling Zheng, Tianlong Ma\footnote{Corresponding author}, Xian'an Jin\\
\small School of Mathematical Sciences\\[-0.8ex]
\small Xiamen University\\[-0.8ex]
\small Xiamen 361005, P. R. China\\
\small\tt Email: rlzheng2017@163.com, tianlongma@aliyun.com, xajin@xmu.edu.cn}

\begin{abstract}
For a graph $G=(V,E)$ and $v_{i}\in V$, denote by $d_{v_{i}}$ (or $d_{i}$ for short) the degree of vertex $v_{i}$.
The $p$-Sombor matrix $\textbf{S}_{\textbf{p}}(G)$ ($p\neq0$) of a graph $G$ is a square matrix, where the
$(i,j)$-entry is equal
to $\displaystyle (d_{i}^{p}+d_{j}^{p})^{\frac{1}{p}}$ if the vertices $v_{i}$ and $v_{j}$ are adjacent, and 0
otherwise. The $p$-Sombor spectral radius of $G$, denoted by $\displaystyle \rho(\textbf{S}_{\textbf{p}}(G))$,
is the largest eigenvalue of the
$p$-Sombor matrix $\textbf{S}_{\textbf{p}}(G)$. In this paper, we consider the extremal trees, unicyclic and bicyclic graphs with respect to the $p$-Sombor spectral radii. We characterize completely the extremal graphs with the first three maximum Sombor spectral radii, which answers partially a problem posed by Liu et al. in [MATCH Commun. Math. Comput. Chem. 87 (2022) 59-87].

\end{abstract}

\begin{keyword}
$p$-Sombor spectral radius; weighted adjacency matrix; tree; unicyclic graph; bicyclic graph
\MSC[2020] 05C09\sep 05C35\sep 05C50
\end{keyword}

\end{frontmatter}


\section{Introduction}
Graphs considered in this paper are all connected and simple, i.e., have no loops and parallel edges. Let $G=(V(G),E(G))$ be a graph with vertex set $V(G)=\{v_{1},v_{2},\ldots,v_{n}\}$ and edge set $E(G)$. An edge $e\in
E(G)$ with end vertices $v_{i}$ and $v_{j}$ is usually denoted by $v_{i}v_{j}$. For $i=1,2,\ldots,n$, we denote by
$d_{v_{i}}$ (or $d_{i}$ for short) the degree of the vertex $v_{i}$ in $G$, $N(v_{i})$ the set of neighbours
of vertex $v_{i}$ in $G$ and $N[v_{i}]=N(v_{i})\cup\{v_{i}\}$. As usual, let $P_{n}$, $S_{n}$ and $C_{n}$ be the path,
star and cycle of order $n\geq3$.
In molecular graph theory, the topological indices of molecular graphs are used to reflect chemical properties of chemical molecules. There are many topological indices and among them there is a family of vertex-degree-based indices.
The vertex-degree-based index $TI_{f}(G)$ of $G$ with positive symmetric function $f(x,y)$ is defined as
\begin{center}
	$ \displaystyle TI_{f}(G)=\sum\limits_{v_{i}v_{j}\in E(G)}f(d_{i}, d_{j}),$
\end{center}
such as Randi\'{c} index \cite{1}, Atom-Bond Connectivity index \cite{8} and Arithmetic-Geometric index \cite{15} and so on.

Gutman \cite{U4} introduced the Sombor index which is a novel vertex-degree-based topological index in chemical graph
theory. It is defined as
\begin{center}
	$\displaystyle SO(G)=\sum\limits_{v_{i}v_{j}\in E(G)}\sqrt{d_{i}^{2}+d_{j}^{2}}.$
\end{center}
Since the form of the Sombor index corresponds to a $2$-norm, R\'{e}ti et al. \cite{U5} defined a
more general form, i.e., $p$-Sombor index ($p\neq 0$), which was defined as
\begin{center}
	$\displaystyle SO_{p}(G)=\sum\limits_{v_{i}v_{j}\in E(G)}(d_{i}^{p}+d_{j}^{p})^{\frac{1}{p}}.$
\end{center}

Each index maps a molecular graph into a single number. Li \cite{U7} proposed that if we use a matrix
to represent the structure of a molecular graph with weights separately on its pairs of adjacent vertices, it will
completely keep the structural information of the graph. For example, the Randi\'{c} matrix \cite{555,666}, Atom-Bond
Connectivity matrix \cite{10} and Arithmetic-Geometric matrix \cite{16} were considered separately. Based on these examples, Das et al. \cite{U8} proposed the weighted adjacency matrix $A_{f}(G)$, and it is defined as

$$A_{f}(G)(i,j)=\left\{
\begin{aligned}
	&f(d_{i}, d_{j}), \ \ \ \  v_{i}v_{j}\in E(G);\\
	&0,\ \ \ \ \ \ \ \ \ \ \ \ \ \textrm{otherwise}.
\end{aligned}
\right.
$$

Since $G$ is a connected graph, the weighted adjacency matrix $A_{f}(G)$
is an $n\times n$ nonnegative and irreducible symmetric matrix. Thus $\rho(A_{f}(G))$ is exactly the largest eigenvalue of $A_{f}(G)$ and it has a positive eigenvector $\textbf{x}=(x_{1}, x_{2},\ldots, x_{n})^{\intercal}$. As a special case, the adjacency matrix $A(G)$ is $f(x,y)=1$ and the adjacency spectral radius of $G$ is denoted as $\rho(G)$.
If $\displaystyle f'_{x}(x, y)\geq0$ and $\displaystyle f''_{x}(x, y)\geq0$, then $\displaystyle f(x, y)$ is said to be increasing and convex in variable $x$. If $f(x, y)>0$ is increasing and convex in variable $x$ and for any $x_{1}+y_{1}=x_{2}+y_{2}$ and $\mid x_{1}-y_{1}\mid>\mid x_{2}-y_{2}\mid$, $f(x_{1},y_{1})\geq f(x_{2},y_{2})$, then $A_{f}(G)$ is called the weighted adjacency matrix with property $P^{\ast}$ of $G$ in \cite{107}.

Recently, corresponding to the $p$-Sombor index, Liu et al. \cite{U6} defined the $p$-Sombor matrix as $\displaystyle
\textbf{S}_{\textbf{p}}(G)=[s^{p}_{ij}]_{n\times n}$ $(p\neq0)$, where

$$s^{p}_{ij}=\left\{
\begin{aligned}
	&(d_{i}^{p}+d_{j}^{p})^{\frac{1}{p}}, \ \ \ \  v_{i}v_{j}\in E(G);\\
	&0,\ \ \ \ \ \ \ \ \ \ \ \ \ \textrm{otherwise}.
\end{aligned}
\right.
$$
For $p=2$, it is the Sombor matrix. Clearly, if $p\geq1$, then the $p$-Sombor matrix is a weighted adjacency matrix with property $P^{\ast}$. Throughout this paper, we denote $\rho(\textbf{S}_{\textbf{p}}(G))$ the largest eigenvalue of $\textbf{S}_{\textbf{p}}(G)$. Choose the positive eigenvector $\textbf{x}$ such that $\|\textbf{x}\|_{2}=1$ and $x_{i}$ corresponds to the vertex $v_{i}$ and we call the unique unit positive vector $\textbf{x}$ principal eigenvector of $G$.

Gutman \cite{SP1} studied many spectral properties of
the Sombor matrix. Gutman and Gowtham \cite{1181} obtained some results on the coefficients of the characteristic
polynomial and bounds for the energy of Sombor matrix. There is a wealthy literature about Sombor matrix and its related topics, see \cite{12121,12122,12123,12124,12125}. Moreover, Liu et al. \cite{U6} considered some bounds of $p$-Sombor spectral radius and $p$-Sombor spectral spread and the
Nordhaus-Gaddum-type results for $p$-Sombor spectral radius. They obtained that for a tree $T$ of order $n$ and
$p\geq1$,
\begin{center}
	$\displaystyle \rho(\textbf{S}_{\textbf{p}}(T))\leq\rho(\textbf{S}_{\textbf{p}}(S_{n}))$
\end{center}
with equality if and only if $T\cong S_{n}$. Meanwhile, they asked the following problem.
\begin{problem}\label{problem 1.1}\cite{U6}
	What are the structures of the first three maximum and minimum trees, unicyclic and bicyclic graphs for the Sombor spectral radius?
\end{problem}
In this paper, we consider the extremal trees, unicyclic and bicyclic graphs with respect to the $p$-Sombor spectral radii. We characterize completely the extremal graphs with the first three maximum Sombor spectral radii, which answers partially Problem \ref{problem 1.1}. Our main results can be summarized as follows, where the undefined extremal graphs will be characterized in corresponding sections.

\begin{itemize}
	\item[1.] Among all trees of order $n\geq6$, we obtain $S_{n}$, $S_{2,n-2}$, $S_{3,n-3}$ are, respectively, the unique trees with the first three maximum spectral radii for the weighted adjacency matrices with property $P^{\ast}$. In addition, we get
	$\rho(A_{f}(P_{4}))=\rho(A_{f}(S_{2,2}))<\rho(A_{f}(S_{4}))$ and $\rho(A_{f}(P_{5}))<\rho(A_{f}(S_{2,3}))<\rho(A_{f}(S_{5}))$. The result also holds for the $p$-Sombor matrix with $p\geq1$;

	\item[2.] Among all unicyclic graphs of order $n\geq7$, $S_{n}+e$, $U_{1}$, $U_{2}$, $U_{3}$ and $U_{4}$ are, respectively, the unique unicyclic graphs with the first five maximum $p$-Sombor spectral radii for $p\geq2$. For $n=6$, we
	get $S_{6}+e$, $U_{1}$ and $U_{2}$ are, respectively, the unique unicyclic
	graphs with the first three maximum $p$-Sombor spectral radii and the unicyclic
	graph with the fourthly largest $p$-Sombor spectral radius depends on the choice of parameter $p$.
	Furthermore, we obtain
	$\rho(\textbf{S}_{\textbf{p}}(C_{5}))<\rho(\textbf{S}_{\textbf{p}}(U_{3}))<\rho(\textbf{S}_{\textbf{p}}(U_{2}))<\rho(\textbf{S}_{\textbf{p}}(U_{1}))<\rho(\textbf{S}_{\textbf{p}}(S_{5}+e))$
	\noindent for $n=5$;
	\item[3.] Among all bicyclic graphs of order $n\geq6$, we obtain $B_{1}$ is the unique bicyclic graph with the largest spectral radius of the weighted adjacency matrix with property $P^{\ast}$. In addition, $B_{1}$, $B_{2}$ and $B_{3}$ are, respectively, the unique bicyclic graphs with the first three maximum $p$-Sombor spectral radii for $p\geq2$. For $n=5$, we have $B_{1}\cong B_{3}$ is the unique bicyclic graph with the largest $p$-Sombor spectral radius, the bicyclic graphs with the secondly and thirdly largest $p$-Sombor spectral radii depend on the choice of parameter $p$ and they are $B_{2}$ and $B_{4}$.
\end{itemize}

In addition, we give an algorithm to calculate the extremal graphs with the first three minimum Sombor spectral radii among all graphs with order $n$ and size $m$ in Appendix 1.

\section{Preliminary results}
\noindent

In this section, we provide the knowledge of matrix theory on nonnegative matrices, some results on spectra of graphs and a new lemma that will be used in the subsequent sections.

\subsection{Old results}
\noindent

We first recall several well-known results in matrix theory.

\begin{theorem}\label{theorem 2.1}\cite{19}
	Let $A$ and $B$ be both $n\times n$ nonnegative symmetric matrices. Then $\displaystyle \rho(A+B)\geq\rho(A)$. Furthermore, if $A$ is irreducible and $B\neq0$, then $\displaystyle \rho(A+B)>\rho(A)$.
\end{theorem}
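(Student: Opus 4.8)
The plan is to reduce everything to the Rayleigh--Ritz variational characterization of the largest eigenvalue of a symmetric matrix, combined with the Perron--Frobenius theory of nonnegative matrices. Since $A$, $B$, and $A+B$ are all symmetric, their eigenvalues are real; and since they are nonnegative, Perron--Frobenius guarantees that for each of them the spectral radius coincides with the largest eigenvalue and is attained by a nonnegative eigenvector. In particular, for any symmetric nonnegative matrix $M$ we may invoke the variational formula $\rho(M)=\max_{\|\mathbf{y}\|_{2}=1}\mathbf{y}^{\intercal}M\mathbf{y}$, which is the tool I would use throughout.

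First I would establish the weak inequality. Let $\mathbf{x}$ be a unit Perron eigenvector of $A$, so that $\mathbf{x}\geq 0$ and $\mathbf{x}^{\intercal}A\mathbf{x}=\rho(A)$. Applying the variational formula to the symmetric nonnegative matrix $A+B$ and using $\mathbf{x}$ as a test vector gives
\[
\rho(A+B)\;\geq\;\mathbf{x}^{\intercal}(A+B)\mathbf{x}\;=\;\mathbf{x}^{\intercal}A\mathbf{x}+\mathbf{x}^{\intercal}B\mathbf{x}\;=\;\rho(A)+\mathbf{x}^{\intercal}B\mathbf{x}.
\]
Because $B$ is nonnegative and $\mathbf{x}\geq 0$, the quadratic form $\mathbf{x}^{\intercal}B\mathbf{x}=\sum_{i,j}x_{i}B_{ij}x_{j}$ is a sum of nonnegative terms, hence $\mathbf{x}^{\intercal}B\mathbf{x}\geq 0$, and the inequality $\rho(A+B)\geq\rho(A)$ follows.

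For the strict inequality I would upgrade the sign information on both factors of that quadratic form. If $A$ is irreducible, then by the Perron--Frobenius theorem its Perron eigenvector is strictly positive, so $x_{i}>0$ for every $i$. Since $B\neq 0$ is nonnegative and symmetric, it has some entry $B_{ij}>0$; then $\mathbf{x}^{\intercal}B\mathbf{x}$ contains the strictly positive contribution $B_{ij}x_{i}x_{j}$ (together with its symmetric partner when $i\neq j$), whence $\mathbf{x}^{\intercal}B\mathbf{x}>0$. Combining this with the displayed estimate yields $\rho(A+B)>\rho(A)$, as required.

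The conceptual heart of the argument is the reduction to the Rayleigh quotient, and the only place demanding genuine care is the appeal to Perron--Frobenius: I must make sure that for a \emph{symmetric} nonnegative matrix the spectral radius really equals the largest eigenvalue (so that the variational formula applies), and that irreducibility is precisely what promotes the nonnegative Perron vector to a strictly positive one, since this positivity is exactly what converts the weak inequality into a strict one. These facts are standard, so once they are cited the remainder is a short computation.
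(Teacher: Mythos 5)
Your proof is correct. Note that the paper itself gives no proof of this statement: it is quoted as a known result from the reference cited as \cite{19}, so there is no in-paper argument to compare against. Your route---combining the Rayleigh--Ritz characterization $\rho(M)=\max_{\|\mathbf{y}\|_{2}=1}\mathbf{y}^{\intercal}M\mathbf{y}$ for symmetric nonnegative matrices with the Perron--Frobenius facts that $\rho(A)$ admits a nonnegative unit eigenvector in general and a strictly positive one when $A$ is irreducible---is the standard textbook argument, and each step checks out: the weak inequality needs only $\mathbf{x}^{\intercal}B\mathbf{x}\geq 0$ for the nonnegative Perron vector $\mathbf{x}$ of $A$, and the strict one follows because positivity of $\mathbf{x}$ together with $B\neq 0$ forces $\mathbf{x}^{\intercal}B\mathbf{x}>0$.
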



\begin{theorem}\label{theorem 2.3}\cite{U1}
	Let $A$ be an $n\times n$ nonnegative and symmetric matrix. Then $\rho(A)\geq\textbf{x}^{\top}A\textbf{x}$ for any unit vector $\textbf{x}$, with equality holds if and only if $\displaystyle A\textbf{x}=\rho(A)\textbf{x}$.
\end{theorem}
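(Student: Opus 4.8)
The plan is to combine the spectral theorem for real symmetric matrices with the Perron--Frobenius theorem for nonnegative matrices. Since $A$ is real and symmetric, it admits an orthonormal eigenbasis $\textbf{u}_{1},\ldots,\textbf{u}_{n}$ with real eigenvalues $\lambda_{1}\geq\lambda_{2}\geq\cdots\geq\lambda_{n}$. Because $A$ is moreover nonnegative, Perron--Frobenius guarantees that its spectral radius is attained by a nonnegative eigenvalue, so that $\rho(A)=\lambda_{1}$ is precisely the \emph{largest} eigenvalue. Securing this identification is the conceptual heart of the argument, and I would isolate it first.

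Having fixed the eigenbasis, I would expand an arbitrary unit vector $\textbf{x}$ as $\textbf{x}=\sum_{i=1}^{n}c_{i}\textbf{u}_{i}$, where orthonormality yields $\sum_{i=1}^{n}c_{i}^{2}=\|\textbf{x}\|_{2}^{2}=1$. A direct computation then gives
$$\textbf{x}^{\top}A\textbf{x}=\sum_{i=1}^{n}\lambda_{i}c_{i}^{2}\leq\lambda_{1}\sum_{i=1}^{n}c_{i}^{2}=\lambda_{1}=\rho(A),$$
which is the claimed inequality. The single estimate used here is $\lambda_{i}\leq\lambda_{1}$ for every $i$, weighted by the nonnegative coefficients $c_{i}^{2}$.

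For the equality case I would track exactly when the bound $\sum_{i}\lambda_{i}c_{i}^{2}\leq\lambda_{1}\sum_{i}c_{i}^{2}$ is tight. Since $\lambda_{1}-\lambda_{i}\geq0$ for all $i$, equality forces $c_{i}=0$ whenever $\lambda_{i}<\lambda_{1}$, so that $\textbf{x}$ lies entirely in the eigenspace of $\lambda_{1}$; this is equivalent to $A\textbf{x}=\lambda_{1}\textbf{x}=\rho(A)\textbf{x}$. The converse is immediate: if $A\textbf{x}=\rho(A)\textbf{x}$ then $\textbf{x}^{\top}A\textbf{x}=\rho(A)\,\textbf{x}^{\top}\textbf{x}=\rho(A)$.

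The only genuinely delicate point is the identification $\rho(A)=\lambda_{1}$. For a general symmetric matrix the Rayleigh quotient is maximized by the largest algebraic eigenvalue, which need not coincide with the eigenvalue of maximum modulus defining $\rho(A)$. The hypothesis that $A$ is nonnegative is exactly what closes this gap through Perron--Frobenius; once it is established, the remainder is a routine Rayleigh-quotient computation.
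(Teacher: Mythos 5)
Your proof is correct and complete. Note that the paper itself does not prove this statement---it is quoted as a known result from \cite{U1}---so there is no in-paper argument to compare against; your spectral-theorem plus Rayleigh-quotient argument is the standard proof, and you correctly isolate the one place where nonnegativity is essential, namely the identification $\rho(A)=\lambda_{1}$. As a small remark, that step can even be done without invoking Perron--Frobenius: if $\textbf{v}$ is a unit eigenvector for the smallest eigenvalue $\lambda_{n}$, then applying the Rayleigh bound to the entrywise absolute value $|\textbf{v}|$ gives
\[
\lambda_{1}\;\geq\;|\textbf{v}|^{\top}A\,|\textbf{v}|\;=\;\sum_{i,j}a_{ij}|v_{i}||v_{j}|\;\geq\;\Bigl|\sum_{i,j}a_{ij}v_{i}v_{j}\Bigr|\;=\;|\lambda_{n}|,
\]
using $a_{ij}\geq0$, so $\rho(A)=\max(|\lambda_{1}|,|\lambda_{n}|)=\lambda_{1}$; this is slightly more self-contained than citing Perron--Frobenius, but your route is equally valid.
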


\noindent $\textbf{Definition 2.1.}$  Let $A$ be an $n\times n$ real matrix whose rows and columns are indexed by
$X=\{1,2,...,n\}$. We partition $X$ into $\{X_{1},X_{2},...,X_{k}\}$ in order and rewrite $A$ according to
$\{X_{1},X_{2},...,X_{k}\}$ as follows:

\begin{equation*}
	A=\begin{pmatrix}
		A_{1,1} & \cdots & A_{1,k}\\
		\vdots & \ddots & \vdots \\
		A_{k,1} & \cdots & A_{k,k}\\
	\end{pmatrix},
\end{equation*}

\noindent where $A_{i,j}$ is the block of $A$ formed by rows in $X_{i}$ and the columns in $X_{j}$. Let $b_{i,j}$ denote
the average row sum of $A_{i,j}$. Then the matrix $B=[b_{i,j}]$ will be called the \textbf{quotient matrix} of the
partition of $A$. In particular, the partition is called an \textbf{equitable partition} when the row sum of each block
$A_{i,j}$ is constant.

A result on adjacency spectral radius of graphs.

\begin{theorem}\label{theorem 2.4}\cite{U1}
	Let $A\geq 0$ be an irreducible matrix, $B$ be the quotient matrix of an equitable partition of $A$. Then $\displaystyle
	\rho(A)=\rho(B)$.
\end{theorem}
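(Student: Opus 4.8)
The plan is to encode the partition as a single matrix identity and then let the Perron--Frobenius theorem do the work. Let $S = (s_{vi})$ be the $n \times k$ characteristic matrix of the partition, defined by $s_{vi} = 1$ if the index $v$ belongs to the part $X_i$ and $s_{vi} = 0$ otherwise. Since the parts are nonempty and disjoint, $S$ has full column rank, and since each row of $S$ has exactly one nonzero entry we have $S\mathbf{y} > 0$ whenever $\mathbf{y} > 0$, and $\mathbf{w}^{\top}S > 0$ whenever $\mathbf{w}^{\top} > 0$. The heart of the argument is the observation that the partition is equitable if and only if $AS = SB$.

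I would verify this identity entrywise. For $v \in X_j$, the $(v,i)$-entry of $AS$ is $\sum_{w \in X_i} a_{vw}$, which is the sum of the entries of row $v$ lying in the block $A_{j,i}$; equitability asserts exactly that this sum does not depend on the choice of $v \in X_j$ and equals the average row sum $b_{j,i}$. On the other hand the $(v,i)$-entry of $SB$ is $b_{j,i}$ as well, since row $v$ of $S$ selects only the $j$-th row of $B$. Thus $AS = SB$. This bookkeeping, and tracking where irreducibility is genuinely needed, is the only delicate point; everything after it is formal.

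From $AS = SB$ the equality of spectral radii follows by exhibiting two Perron vectors of $B$ carrying the two candidate radii and pairing them. Because $A \ge 0$ is irreducible, Perron--Frobenius provides a strictly positive left eigenvector $\mathbf{w}^{\top} > 0$ with $\mathbf{w}^{\top}A = \rho(A)\mathbf{w}^{\top}$. Putting $\mathbf{u}^{\top} = \mathbf{w}^{\top}S$, which is strictly positive by the property of $S$ noted above, the identity yields $\mathbf{u}^{\top}B = \mathbf{w}^{\top}SB = \mathbf{w}^{\top}AS = \rho(A)\mathbf{u}^{\top}$, so $\rho(A)$ is an eigenvalue of the nonnegative matrix $B$ with a positive left eigenvector $\mathbf{u}$. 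Separately, since $B \ge 0$, Perron--Frobenius gives a nonnegative nonzero right eigenvector $\mathbf{v} \ge 0$ with $B\mathbf{v} = \rho(B)\mathbf{v}$.

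Finally I would pair the two: associativity gives $\rho(A)\,\mathbf{u}^{\top}\mathbf{v} = (\mathbf{u}^{\top}B)\mathbf{v} = \mathbf{u}^{\top}(B\mathbf{v}) = \rho(B)\,\mathbf{u}^{\top}\mathbf{v}$, and $\mathbf{u}^{\top}\mathbf{v} > 0$ because $\mathbf{u} > 0$ while $\mathbf{v}$ is nonnegative and nonzero; cancelling the positive scalar $\mathbf{u}^{\top}\mathbf{v}$ forces $\rho(A) = \rho(B)$. As a consistency check, the inequality $\rho(B) \le \rho(A)$ can also be seen directly, since $B\mathbf{y} = \mu\mathbf{y}$ together with the injectivity of $S$ gives $A(S\mathbf{y}) = \mu(S\mathbf{y})$ with $S\mathbf{y} \ne 0$, so every eigenvalue of $B$ is one of $A$. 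I expect the main obstacle to be purely the translation step $AS = SB$: once equitability is recast as this intertwining relation, the result reduces to two standard applications of Perron--Frobenius, with the irreducibility of $A$ entering precisely to guarantee that $\mathbf{u}$ is strictly positive in the final cancellation.
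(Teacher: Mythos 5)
Your proof is correct, and it is worth noting that the paper itself offers no proof of this statement at all: Theorem 2.4 is quoted from the reference [U1] (a standard spectral graph theory text) without argument, so there is no internal proof to compare against. Your argument is a complete and self-contained derivation. The key intertwining identity $AS=SB$ is exactly the standard encoding of equitability, and your entrywise verification of it is right. Where you diverge slightly from the usual textbook route is in how you finish: the classical argument takes the right Perron vector $\mathbf{v}$ of $B$ (noting that $B$ inherits irreducibility from $A$, so $\mathbf{v}>0$), lifts it to the positive vector $S\mathbf{v}$ satisfying $A(S\mathbf{v})=\rho(B)S\mathbf{v}$, and invokes the Perron--Frobenius uniqueness statement that $\rho(A)$ is the only eigenvalue of an irreducible nonnegative matrix admitting a positive eigenvector. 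Your version instead pushes the left Perron vector of $A$ down to $\mathbf{u}^{\top}=\mathbf{w}^{\top}S$ and closes with the pairing $\rho(A)\,\mathbf{u}^{\top}\mathbf{v}=\rho(B)\,\mathbf{u}^{\top}\mathbf{v}$; this has the small advantage that you never need to argue that $B$ is irreducible (a nonnegative nonzero Perron vector of $B$ suffices), while the textbook lifting argument has the advantage of showing directly that the full spectrum of $B$ sits inside that of $A$ with explicit eigenvector correspondence. Your closing consistency check ($A(S\mathbf{y})=\mu(S\mathbf{y})$, giving $\operatorname{spec}(B)\subseteq\operatorname{spec}(A)$) in fact recovers that stronger containment, so the two routes together give a fuller picture than either alone.
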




\begin{theorem}\label{theorem 2.7}\cite{U2}
	Let $U(n,n_{1},n_{2})$ be the unicyclic graph of order $n$ as shown in Figure \ref{fig1056}. If $\displaystyle max\{n_{1},n_{2}\}+3\geq \frac{(1+\sqrt{6n+10})^{2}}{9}$, then
	$\displaystyle \rho(U(n,n_{1},n_{2}))\leq \sqrt{max\{n_{1},n_{2}\}+3}$.
\end{theorem}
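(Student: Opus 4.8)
The plan is to exploit the symmetry of $U(n,n_1,n_2)$ to reduce the computation of $\rho:=\rho(U(n,n_1,n_2))$ (the adjacency spectral radius) to a single polynomial equation, and then to recognise the hypothesis as precisely the condition that $\sqrt{\max\{n_1,n_2\}+3}$ dominates the relevant root. Assume without loss of generality that $n_1\ge n_2$, so $\max\{n_1,n_2\}=n_1$; reading off the figure, $U(n,n_1,n_2)$ is a triangle $uvw$ carrying $n_1$ pendant vertices at $u$ and $n_2$ at $v$, so $n=n_1+n_2+3$. The partition of $V$ into $\{u\},\{v\},\{w\}$ together with the two pendant classes is equitable, so by Theorem~\ref{theorem 2.4} the value $\rho$ equals the spectral radius of the associated quotient matrix. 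Equivalently, taking the principal eigenvector $\mathbf{x}$ (which is constant, say $s$ and $t$, on the two pendant classes by Perron--Frobenius), I would write the eigenequations at $u,v,w$ and at a pendant of each type, substitute $s=x_u/\rho$ and $t=x_v/\rho$, and eliminate $x_w$ via $\rho x_w=x_u+x_v$.

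This elimination yields the pair of relations $(\rho^2-n_1-1)x_u=(\rho+1)x_v$ and $(\rho^2-n_2-1)x_v=(\rho+1)x_u$; multiplying them and cancelling $x_ux_v>0$ gives the characteristic equation
\[
(\rho^2-n_1-1)(\rho^2-n_2-1)=(\rho+1)^2 .
\]
Since $x_u,x_v>0$, the first relation forces $\rho^2>n_1+1$, so $\rho$ is a root of $F(\rho):=(\rho^2-n_1-1)(\rho^2-n_2-1)-(\rho+1)^2$ lying in $I:=(\sqrt{n_1+1},\infty)$, on which both factors $\rho^2-n_1-1$ and $\rho^2-n_2-1$ are positive. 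The goal becomes showing that $\rho$ does not exceed $\lambda_0:=\sqrt{n_1+3}\in I$.

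The decisive step is to locate $\rho$ relative to $\lambda_0$. On $I$ I would factor $F(\rho)=(\rho^2-n_2-1)\,\phi(\rho)$ with $\phi(\rho)=(\rho^2-n_1-1)-(\rho+1)^2/(\rho^2-n_2-1)$; as $\rho^2-n_2-1>0$ on $I$, $F$ and $\phi$ share both sign and zeros there. A short derivative computation shows the rational term $(\rho+1)^2/(\rho^2-n_2-1)$ is strictly decreasing on $I$, so $\phi$ is strictly increasing and has $\rho$ as its unique zero in $I$; hence $\rho\le\lambda_0$ if and only if $\phi(\lambda_0)\ge0$, i.e. $F(\lambda_0)\ge0$. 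Finally I would evaluate, using $n_2=n-n_1-3$,
\[
F(\lambda_0)=2(n_1-n_2+2)-\bigl(n_1+4+2\sqrt{n_1+3}\bigr)=3n_1-2n+6-2\sqrt{n_1+3},
\]
and check by isolating the radical and squaring (all quantities involved are nonnegative, so the steps are reversible) that $F(\lambda_0)\ge0$ is equivalent to $3\sqrt{n_1+3}\ge 1+\sqrt{6n+10}$, that is, to the hypothesis $\max\{n_1,n_2\}+3\ge(1+\sqrt{6n+10})^2/9$. The main obstacle is exactly this root-location argument: passing from the pointwise inequality $F(\lambda_0)\ge0$ to the spectral bound $\rho\le\lambda_0$ requires the monotonicity of $\phi$ on $I$ (equivalently, that $\lambda_0$ lies to the right of every root of $F$ that could interfere), which the factorisation and the sign of the derivative supply.
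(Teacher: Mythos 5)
The paper does not actually prove this statement: Theorem~\ref{theorem 2.7} is imported from \cite{U2} as a preliminary tool, so there is no in-paper proof to compare yours against, and your proposal must be judged as a self-contained derivation. Judged that way, it is correct. Your reading of Figure~\ref{fig1056} (a triangle $uvw$ with $n_{1}$ pendant vertices at $u$ and $n_{2}$ at $v$, $n=n_{1}+n_{2}+3$) is the only one consistent with how the paper uses the bound in Lemma~\ref{lemma 4.2}, where $\max\{n_{1},n_{2}\}=n-5$ yields $\rho\le\sqrt{n-2}$. The eigen-equations and the elimination to $(\rho^{2}-n_{1}-1)(\rho^{2}-n_{2}-1)=(\rho+1)^{2}$ are right; the localization $\rho^{2}>n_{1}+1$ follows correctly from positivity of the Perron vector; and the monotonicity you need is genuine, since on $I$ one computes $\frac{d}{d\rho}\,\frac{(\rho+1)^{2}}{\rho^{2}-n_{2}-1}=\frac{-2(\rho+1)(\rho+n_{2}+1)}{(\rho^{2}-n_{2}-1)^{2}}<0$, so $\phi$ is strictly increasing there and $\rho$ is its unique zero. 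Your closing computation is also exact: with $a=\sqrt{n_{1}+3}$ the condition $F(\lambda_{0})\ge 0$ reads $3a^{2}-2a-(2n+3)\ge 0$, whose positive root is $a=\bigl(1+\sqrt{6n+10}\bigr)/3$, i.e.\ precisely the stated hypothesis $\max\{n_{1},n_{2}\}+3\ge(1+\sqrt{6n+10})^{2}/9$. Indeed your argument establishes the stronger ``if and only if'' version of the theorem, and it correctly isolates the one step a careless proof would miss, namely that a sign check at $\lambda_{0}$ only bounds $\rho$ after one knows $\lambda_{0}$ and $\rho$ lie in a region where the characteristic function changes sign exactly once.
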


\begin{figure}[H]
	\centering
	\includegraphics[width=4cm]{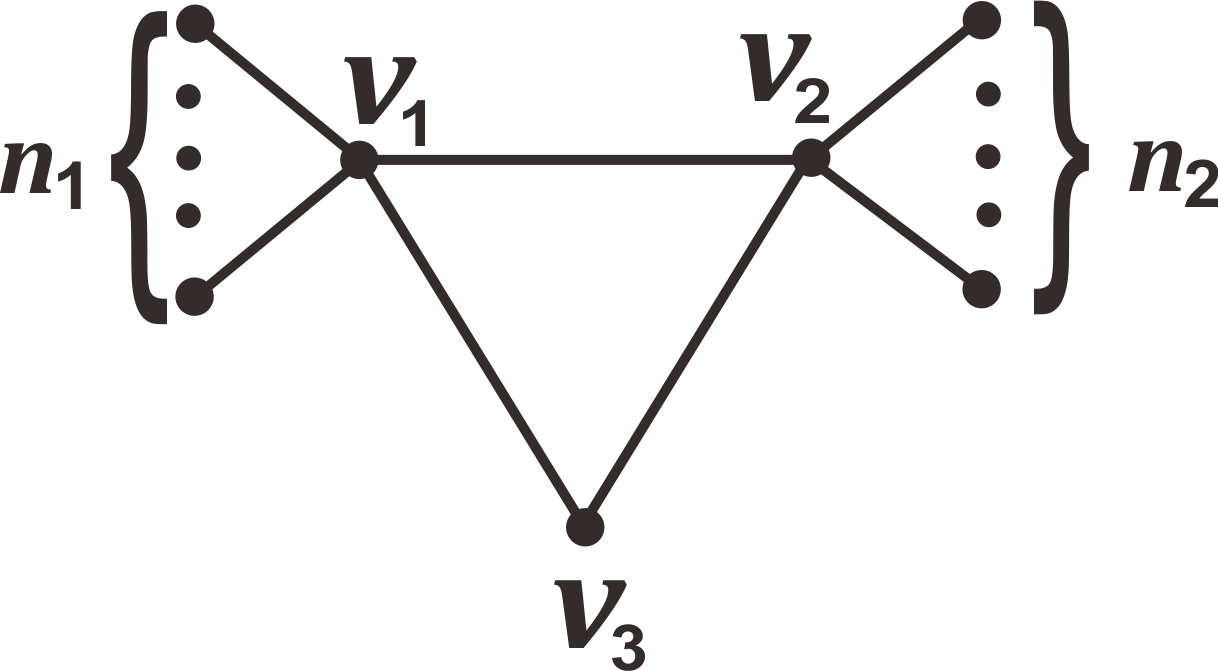}
	\caption{\small  The unicyclic graph $U(n,n_{1},n_{2})$.}
	\label{fig1056}
\end{figure}

\subsection{New Results}

In this subsection, we will list some results about the relation between edge moving and spectral radius of the weighted adjacency matrix with property $P^{\ast}$. These operations will be used repeatedly in the following proofs.

Kelmans \cite{U3} introduced a simple local operation of a graph to describe the relation between edge moving and spectral radius as follows.
\vskip0.2cm
\noindent $\textbf{Definition 2.2.}$ Let $v_{1},v_{2}$ be two vertices of the graph $G$. And we denote: $N_{1}=N(v_{1})-N[v_{2}]$, $N_{2}=N(v_{2})-N[v_{1}]$. We use the Kelmans operation of $G$ as follows: Replace the edge $v_{1}v_{w}$ by a new edge $v_{2}v_{w}$ for all vertices $v_{w}\in N_{1}$ (as shown in Figure \ref{fig2}). In general, we will denote the obtained graph by $G'$.
\begin{figure}[H]
	\centering
	\includegraphics[width=7cm]{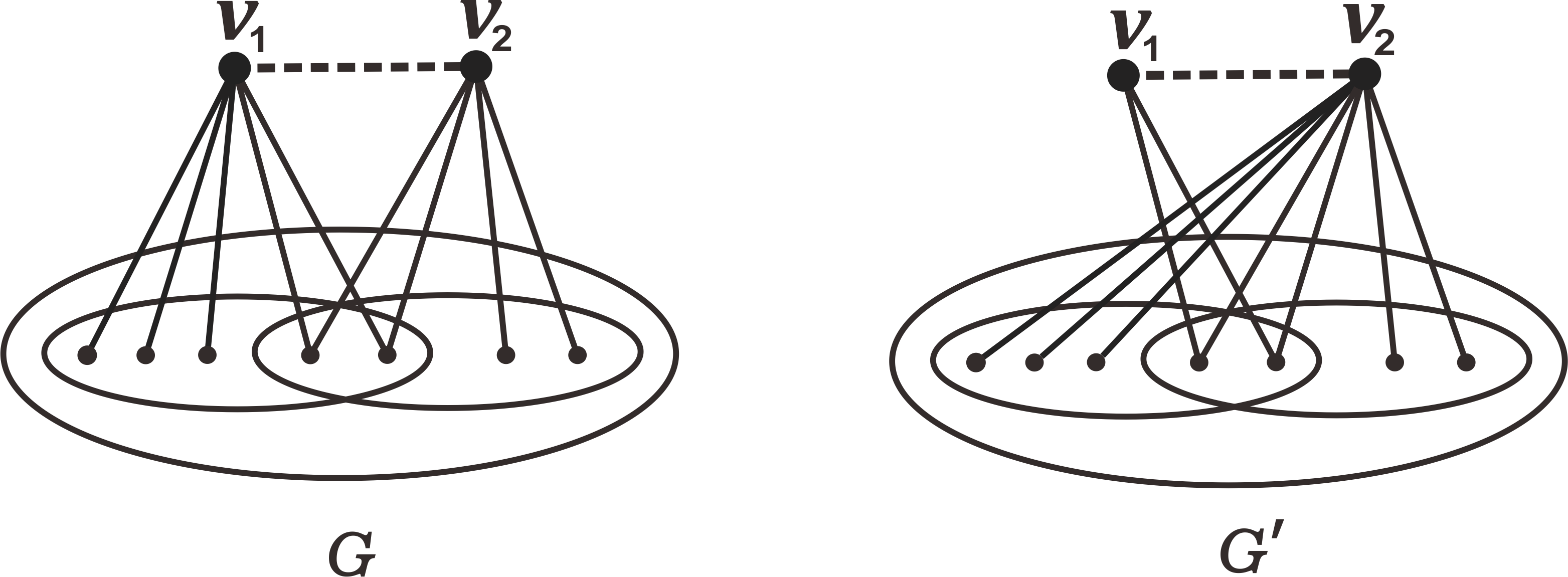}
	\caption{\small  The Kelmans operation.}
	\label{fig2}
\end{figure}

\begin{theorem}\label{theorem 2.8}\cite{107}
	Let $G$ be a connected graph and $G'$ be the graph after a Kelmans operation on any two vertices $v_{1}$ and $v_{2}$ of $G$ as shown in Figure \ref{fig2}. If $G\ncong G'$, then $\displaystyle \rho(A_{f}(G))<\rho(A_{f}(G'))$.
\end{theorem}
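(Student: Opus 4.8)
The plan is to estimate $\rho(A_f(G'))$ from below by the Rayleigh quotient of the principal eigenvector of the \emph{original} graph, and then to force strictness through the equality clause of Theorem~\ref{theorem 2.3}. Let $\mathbf{x}=(x_1,\dots,x_n)^{\top}>0$ be the unit principal eigenvector of $A_f(G)$, so that $\mathbf{x}^{\top}A_f(G)\mathbf{x}=\rho(A_f(G))$ and, by Theorem~\ref{theorem 2.3}, $\rho(A_f(G'))\ge \mathbf{x}^{\top}A_f(G')\mathbf{x}$. Hence it suffices to show $\Delta:=\mathbf{x}^{\top}\big(A_f(G')-A_f(G)\big)\mathbf{x}\ge 0$ and that this inequality cannot be tight together with $\rho(A_f(G'))=\rho(A_f(G))$. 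Write $s=|N_1|\ge 1$ (if $N_1=\varnothing$ then $G'=G$). The transfer lowers $\deg v_1$ from $d_{v_1}$ to $d_{v_1}-s$, raises $\deg v_2$ from $d_{v_2}$ to $d_{v_2}+s$, and alters no other degree; thus $d_{v_1}+d_{v_2}$ is preserved, and a direct count gives $d_{v_2}-(d_{v_1}-s)=|N_2|\ge 0$, i.e. $d_{v_2}+s\ge d_{v_1}$. The sign that drives the argument is $x_{v_1}\le x_{v_2}$, that is, the edges are transferred toward the vertex of larger eigenweight, which I assume throughout.

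Since the weights of all edges avoiding $\{v_1,v_2\}$ are unchanged, $\Delta$ collects only four groups of terms (writing $[v_1\sim v_2]=1$ if $v_1v_2\in E(G)$ and $0$ otherwise):
\[
\begin{aligned}
\Delta=\ & 2\sum_{w\in N_1}x_w\big[f(d_{v_2}+s,d_w)\,x_{v_2}-f(d_{v_1},d_w)\,x_{v_1}\big]\\
&+2\!\!\sum_{u\in N(v_1)\cap N(v_2)}\!\!x_u\Big[x_{v_1}\big(f(d_{v_1}-s,d_u)-f(d_{v_1},d_u)\big)+x_{v_2}\big(f(d_{v_2}+s,d_u)-f(d_{v_2},d_u)\big)\Big]\\
&+2\sum_{w\in N_2}x_w\,x_{v_2}\big[f(d_{v_2}+s,d_w)-f(d_{v_2},d_w)\big]\\
&+2\,[v_1\sim v_2]\,x_{v_1}x_{v_2}\big[f(d_{v_1}-s,d_{v_2}+s)-f(d_{v_1},d_{v_2})\big],
\end{aligned}
\]
accounting respectively for the transferred edges, the edges to the common neighbourhood, the edges $v_2$ already had outside the common part, and the edge $v_1v_2$.

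I would then sign these groups one at a time. The first and third are nonnegative because $f$ is increasing, so (using $d_{v_2}+s\ge d_{v_1}$ and $x_{v_2}\ge x_{v_1}\ge 0$) one has $f(d_{v_2}+s,d_w)\,x_{v_2}\ge f(d_{v_1},d_w)\,x_{v_1}$ and $f(d_{v_2}+s,d_w)\ge f(d_{v_2},d_w)$. The fourth is nonnegative by property $P^{\ast}$: the endpoints of $v_1v_2$ keep the same degree sum while their degree difference does not shrink, $|(d_{v_1}-s)-(d_{v_2}+s)|\ge|d_{v_1}-d_{v_2}|$ (again from $d_{v_1}\le d_{v_2}+s$), so its weight does not decrease. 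The delicate group, and the main obstacle, is the common-neighbour one, where the $v_1$-side weights fall. Here I invoke convexity of $f$ in its first variable: since $f'_x$ is nondecreasing and, by $d_{v_2}\ge d_{v_1}-s$, the length-$s$ interval $[d_{v_2},d_{v_2}+s]$ lies to the right of $[d_{v_1}-s,d_{v_1}]$, the gain $f(d_{v_2}+s,d_u)-f(d_{v_2},d_u)$ dominates the loss $f(d_{v_1},d_u)-f(d_{v_1}-s,d_u)\ge 0$; combined with $x_{v_2}\ge x_{v_1}\ge 0$ this renders each summand nonnegative. Thus $\Delta\ge 0$ and $\rho(A_f(G'))\ge\rho(A_f(G))$.

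For strictness, suppose $\rho(A_f(G'))=\rho(A_f(G))=:\rho$. The squeeze $\rho\ge\mathbf{x}^{\top}A_f(G')\mathbf{x}\ge\mathbf{x}^{\top}A_f(G)\mathbf{x}=\rho$ forces $\mathbf{x}^{\top}A_f(G')\mathbf{x}=\rho(A_f(G'))$, so the equality clause of Theorem~\ref{theorem 2.3} gives $A_f(G')\mathbf{x}=\rho\mathbf{x}$; hence the strictly positive vector $\mathbf{x}$ is a principal eigenvector of $G'$ as well. Now compare the $v_2$-coordinates of $A_f(G)\mathbf{x}=\rho\mathbf{x}$ and $A_f(G')\mathbf{x}=\rho\mathbf{x}$: their difference must vanish, yet it equals the sum of the weight changes among the edges already at $v_2$ (over $N_2$, the common neighbours, and $v_1v_2$), each nonnegative by the monotonicity and $P^{\ast}$ arguments above, plus the contribution $\sum_{w\in N_1}f(d_{v_2}+s,d_w)\,x_w>0$ of the edges newly created from $v_2$ to $N_1$ (here $N_1\neq\varnothing$, $f>0$ and $\mathbf{x}>0$). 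This sum is strictly positive, a contradiction. Therefore $\rho(A_f(G'))>\rho(A_f(G))$, as claimed.
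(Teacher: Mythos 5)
The paper itself does not prove this statement (it is imported from \cite{107}), so your argument has to stand on its own; in outline it is the standard Rayleigh-quotient approach, and most of it is sound. Your four-group decomposition of $\mathbf{x}^{\top}\big(A_f(G')-A_f(G)\big)\mathbf{x}$ is exhaustive (only edges meeting $\{v_1,v_2\}$ change weight, since no other degree changes), the signing of the $N_2$-group (monotonicity), of the $v_1v_2$-group (property $P^{\ast}$, degree sum preserved and degree gap not shrinking), and of the common-neighbour group (convexity, equal-length increments with the right interval to the right) are all valid, and finishing strictness through the equality clause of Theorem \ref{theorem 2.3} applied to the $v_2$-row is exactly the right mechanism. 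The genuine gap is the phrase ``which I assume throughout'': you never discharge the hypothesis $x_{v_1}\le x_{v_2}$. The theorem fixes the direction of the transfer (from $v_1$ to $v_2$), while the principal eigenvector of $G$ is whatever it is, and the omitted case really occurs and really breaks your inequalities. Take $G=S_{d,n-d}$ with $v_1$ the centre of degree $d>n-d$, $v_2$ the other centre, so $N(v_1)\cap N(v_2)=\varnothing$, $s=d-1$; for the choice $f\equiv 1$ (which has property $P^{\ast}$, and $A_f=A$) one computes $x_{v_1}>x_{v_2}$, and your first group becomes $2\sum_{w\in N_1}x_w\,(x_{v_2}-x_{v_1})<0$, so $\Delta<0$ and the Rayleigh bound in the prescribed direction gives nothing --- even though the conclusion of the theorem is true there, since $G'\cong S_n$.

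What is missing is precisely the observation at the heart of Kelmans' argument: the operation is symmetric up to isomorphism. Transferring $N_1$ from $v_1$ to $v_2$ and transferring $N_2$ from $v_2$ to $v_1$ produce isomorphic graphs --- swap the labels of $v_1$ and $v_2$; in either graph one of the two vertices is adjacent to $N_1\cup N_2\cup\big(N(v_1)\cap N(v_2)\big)$ and the other only to the common neighbours, the possible edge $v_1v_2$ being kept. In particular $G\ncong G'$ forces \emph{both} $N_1\neq\varnothing$ and $N_2\neq\varnothing$ (if $N_2=\varnothing$, the same swap exhibits $G\cong G'$). Hence you may argue: if $x_{v_1}\le x_{v_2}$, run your proof as written; if $x_{v_1}>x_{v_2}$, run the identical proof on the reverse transfer, which moves the nonempty set $N_2$ toward the vertex of larger eigenweight and produces a graph $G''\cong G'$, so that $\rho(A_f(G'))=\rho(A_f(G''))>\rho(A_f(G))$. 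With this without-loss-of-generality step inserted your proof is complete; without it, it covers only half of the cases.
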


The following theorem is a consequence of Theorem \ref{theorem 2.8}.

\begin{theorem}\label{theorem 2.9}\cite{107}
	Let $G$ be a connected graph which consists of a proper induced subgraph $H$ and a tree $T$ of order $m+1$ $(m\geq 2)$ such that $H$ and $T$ has a unique common vertex $u$ and $T$ is not a star with center $u$. Let $G^{\ast}=H+uv_{1}+\cdots+uv_{m}$, where $v_{1},v_{2},\ldots,v_{m}\in V(T)$ are distinct pendent vertices of $G^{\ast}$. Then $\displaystyle \rho(A_{f}(G))<\rho(A_{f}(G^{\ast}))$.
\end{theorem}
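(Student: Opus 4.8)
The plan is to realise the passage from $G$ to $G^{\ast}$ as a finite sequence of Kelmans operations carried out inside the tree $T$, and then to apply Theorem~\ref{theorem 2.8} at every step. Throughout I keep the common vertex $u$ fixed and regard $T$ as rooted at $u$. The guiding quantity is the total distance $\displaystyle \sigma(G)=\sum_{v\in V(T)}\mathrm{dist}_{G}(u,v)$ from $u$ to the vertices of the attached tree, which I will force to decrease strictly at each operation while the graph moves toward the star.

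So long as the tree part is not yet a star centered at $u$, I would pick a vertex $v_{1}$ adjacent to $u$ that is not a leaf of $T$; such a $v_{1}$ exists exactly when some vertex of $T$ lies at distance at least $2$ from $u$, i.e.\ when $T$ is not this star. Applying the Kelmans operation of Definition~2.2 with this $v_{1}$ and $v_{2}=u$, one checks that $N_{1}=N(v_{1})-N[u]$ is precisely the set $\{w_{1},\dots,w_{k}\}$ of children of $v_{1}$ in $T$ (with $k\ge 1$), since the parent $u$ lies in $N[u]$ while no child of $v_{1}$ is adjacent to $u$ in the tree. The operation deletes each $v_{1}w_{i}$ and inserts $uw_{i}$. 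Because $v_{1}\in V(T)\setminus\{u\}$ and $u$ is the only vertex common to $H$ and $T$, every neighbour of $v_{1}$ lies in $T$, so all altered edges stay inside $V(T)$; hence $H$ persists unchanged as an induced subgraph, the graph remains connected ($v_{1}$ keeps its edge to $u$ and each $w_{i}$ is reattached to $u$), and it stays simple (each $w_{i}\notin N[u]$ forbids parallel edges). Every subtree hanging at a $w_{i}$ moves one level nearer to $u$, so $\sigma$ drops by at least $k\ge 1$; the procedure therefore halts after finitely many steps, and it can only halt when every vertex of $V(T)\setminus\{u\}$ is a leaf adjacent to $u$ --- that is, when the tree part has become the star $H+uv_{1}+\cdots+uv_{m}=G^{\ast}$.

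This yields a chain $G=G_{0},G_{1},\dots,G_{t}=G^{\ast}$. For each $i$, either $G_{i}\cong G_{i+1}$, whence $\rho(A_{f}(G_{i}))=\rho(A_{f}(G_{i+1}))$ because $A_{f}$ is determined by degrees and adjacencies, or $G_{i}\ncong G_{i+1}$, whence $\rho(A_{f}(G_{i}))<\rho(A_{f}(G_{i+1}))$ by Theorem~\ref{theorem 2.8}; in all cases $\rho(A_{f}(G_{i}))\le\rho(A_{f}(G_{i+1}))$. Since $T$ is not a star centered at $u$ whereas the tree part of $G^{\ast}$ is, the endpoints satisfy $G\ncong G^{\ast}$, so not every consecutive pair can be isomorphic and at least one step is strict; chaining the inequalities gives $\rho(A_{f}(G))<\rho(A_{f}(G^{\ast}))$.

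The step I expect to demand the most care is the bookkeeping that certifies each operation: confirming that $N_{1}$ equals the child set, that no disconnection or parallel edge is ever produced, and that $H$ is genuinely untouched. The other delicate point is the endpoint non-isomorphism $G\ncong G^{\ast}$ used for strictness; I would pin this down by noting that $T$ failing to be a star at $u$ forces an edge inside $V(T)\setminus\{u\}$ (the edge joining a distance-$2$ vertex to its parent), whereas in $G^{\ast}$ the set $V(T)\setminus\{u\}$ is independent. With these verifications in hand, the conclusion is immediate from the monotonicity provided by Theorem~\ref{theorem 2.8}.
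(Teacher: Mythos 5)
Your strategy --- realizing $G\to G^{\ast}$ as a finite chain of Kelmans operations that pull the tree $T$ in toward $u$, and applying Theorem~\ref{theorem 2.8} at each step --- is exactly the derivation the paper intends: Theorem~\ref{theorem 2.9} is quoted from \cite{107} precisely as ``a consequence of Theorem~\ref{theorem 2.8}''. Your bookkeeping is also fine: $N_{1}$ is indeed the child set of $v_{1}$, the graph stays simple and connected, $H$ is untouched, and the process terminates at $G^{\ast}$. The genuine gap is in the strictness step. Theorem~\ref{theorem 2.8} demands $G_{i}\ncong G_{i+1}$ as \emph{abstract} graphs (it must be read so: a Kelmans operation can change the labeled graph yet return an isomorphic one; on the path $a\,b\,c\,d$ with $v_{1}=b$, $v_{2}=d$ it returns the path $b\,c\,d\,a$). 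But your certificate for the endpoint non-isomorphism $G\ncong G^{\ast}$ --- an edge inside $V(T)\setminus\{u\}$ in $G$ versus that set being independent in $G^{\ast}$ --- only shows that the two \emph{edge sets} differ; an isomorphism has no obligation to map $V(T)\setminus\{u\}$ onto itself. The distinction is not pedantic: if $H$ is a single vertex and $T$ is the path $u\,v_{1}\,w$, then $G=P_{3}\cong K_{1,2}=G^{\ast}$, although your criterion would declare them non-isomorphic. (This degenerate case also shows the theorem tacitly requires $H$ to contain a vertex besides $u$, hence $d_{H}(u)\geq1$; your argument never uses any property of $H$, which is a warning sign.)

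The repair is short, so make it explicit. Since $G$ is connected and $H$ has a vertex other than $u$, we have $d_{H}(u)\geq1$. In each of your operations the degree of $v_{1}$ drops from $k+1\geq2$ to $1$, the degree of $u$ grows but stays at least $2$, and every other degree is unchanged (each $w_{i}$ trades the edge $v_{1}w_{i}$ for $uw_{i}$; vertices of $H$ and of the deeper subtrees are untouched). So the number of degree-one vertices increases by exactly one at every step; consecutive graphs therefore have different degree sequences, hence are non-isomorphic, and Theorem~\ref{theorem 2.8} yields a strict inequality at \emph{every} step --- in particular $G\ncong G^{\ast}$ comes for free (equivalently: $G^{\ast}$ has $m$ pendant vertices inside $V(T)\setminus\{u\}$, while $G$ has at most $m-1$ there because $T$ is not a star centered at $u$). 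With this degree-sequence invariant replacing your labeled-edge argument, your proof is complete and coincides with the paper's intended derivation.
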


For another simple local operation which is described in the following, the Theorem \ref{theorem 2.10} is obtained in \cite{107}.

The graph $F$ is shown in Figure \ref{fig115}. $v_{1},v_{2}$ are adjacent vertices of $F$ and the vertices belong to $N(v_{1})-N[v_{2}]$ (resp. $N(v_{2})-N[v_{1}]$) are all pendent vertices and denoted as $v_{w}$ (resp. $v_{y}$). And $v_{z_{1}},\ldots,v_{z_{m}}$ ($m\geq0$) are the common neighbours of $v_{1}$ and $v_{2}$. In addition, we denote $|N(v_{1})-N[v_{2}]|=n_{1}$ and $|N(v_{2})-N[v_{1}]|=n_{2}$. Without loss of generality, we assume that $1\leq n_{1}\leq n_{2}$. We replace edge $v_{1}v_{w}$ in $F$ by a new edge $v_{2}v_{w}$ for one
pendent vertex $v_{w}\in N(v_{1})-N[v_{2}]$ to obtain a new graph $F''$ as shown in Figure \ref{fig115}. Then we have the following result.
\begin{figure}[H]
	\centering
	\includegraphics[width=9cm]{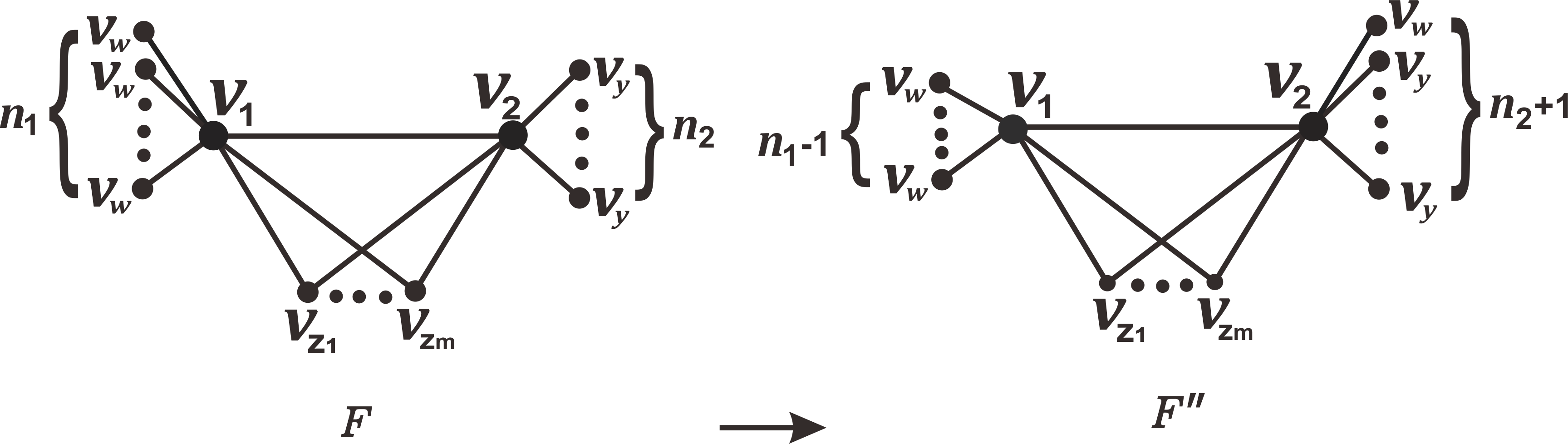}
	\caption{\small  The graphs $F$ and $F''$.}
	\label{fig115}
\end{figure}
\begin{theorem}\label{theorem 2.10}\cite{107}
	Let $G$ be a connected graph of order $n$ which consists of $F$ and a proper induced subgraph $H$ such that $F$ and $H$ have common vertices $v_{z_{1}},\ldots,v_{z_{m}} (0\leq m\leq n-4)$. Replace $F$ by $F''$ to obtain a new connected graph $G''$ (as shown in Figure \ref{fig5}).
	Then $\displaystyle \rho(A_{f}(G))<\rho(A_{f}(G''))$.
\end{theorem}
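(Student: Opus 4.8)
The plan is to fix the Perron eigenvector $\mathbf{x}$ of $A_f(G)$ with eigenvalue $\rho=\rho(A_f(G))$ and to bound $\rho(A_f(G''))$ from below by the Rayleigh quotient $\mathbf{x}^{\top}A_f(G'')\mathbf{x}$ via Theorem~\ref{theorem 2.3}. Writing $d_1=d_{v_1}=1+m+n_1$ and $d_2=d_{v_2}=1+m+n_2$ (so $d_1\le d_2$ since $n_1\le n_2$), the operation sends $d_1\mapsto d_1-1$ and $d_2\mapsto d_2+1$ and leaves every other degree, in particular each $d_{v_{z_i}}$, unchanged. The eigenvalue equations at the pendent vertices give the common value $a=f(d_1,1)x_{v_1}/\rho$ at each pendent on $v_1$ and $b=f(d_2,1)x_{v_2}/\rho$ at each pendent on $v_2$; these are substituted wherever pendent entries occur.

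The first key step is the inequality $x_{v_1}\le x_{v_2}$. I would eliminate the pendents from the eigenvalue equations at $v_1$ and $v_2$ to get $\alpha x_{v_1}=\gamma x_{v_2}+\sum_i f(d_1,d_{z_i})x_{z_i}$ and $\beta x_{v_2}=\gamma x_{v_1}+\sum_i f(d_2,d_{z_i})x_{z_i}$, where $\gamma=f(d_1,d_2)$, $\alpha=\rho-n_1 f(d_1,1)^2/\rho$ and $\beta=\rho-n_2 f(d_2,1)^2/\rho$. Since $n_2\ge n_1$ and $f$ is increasing, $\beta\le\alpha$ and $f(d_2,d_{z_i})\ge f(d_1,d_{z_i})$; substituting one relation into the other yields $(\beta+\gamma)x_{v_2}\ge(\alpha+\gamma)x_{v_1}$. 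As $\alpha+\gamma>0$ (indeed $\alpha\ge 0$: applying Theorem~\ref{theorem 2.1} to the star formed by $v_1$ and its $n_1$ pendents gives $\rho\ge f(d_1,1)\sqrt{n_1}$, and $\gamma>0$), this forces $x_{v_1}\le x_{v_2}$.

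With $x_{v_1}\le x_{v_2}$ in hand, the second step is to expand $\Delta:=\mathbf{x}^{\top}A_f(G'')\mathbf{x}-\mathbf{x}^{\top}A_f(G)\mathbf{x}$ over the edges whose weight changes: $v_1v_2$, the edges $v_1v_{z_i}$ and $v_2v_{z_i}$, the $n_1-1$ surviving pendent edges at $v_1$, the $n_2$ pendent edges at $v_2$, and the relocated edge $v_1v_{w^{\ast}}\mapsto v_2v_{w^{\ast}}$. The $v_1v_2$ term is nonnegative by property $P^{\ast}$, as $(d_1-1)+(d_2+1)=d_1+d_2$ while $|(d_1-1)-(d_2+1)|>|d_1-d_2|$. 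Pairing $v_1v_{z_i}$ with $v_2v_{z_i}$, the decrement at $v_1$ is dominated by the increment at $v_2$ because convexity gives $f(d_2+1,d_{z_i})-f(d_2,d_{z_i})\ge f(d_1,d_{z_i})-f(d_1-1,d_{z_i})$ and $x_{v_2}\ge x_{v_1}$. After substituting $a$ and $b$, the same convexity together with the counting inequality $n_2\ge n_1-1$, monotonicity $f(d_2,1)\ge f(d_1,1)$ and $x_{v_2}\ge x_{v_1}$ shows the gain on the $n_2$ pendents at $v_2$ outweighs the loss on the $n_1-1$ pendents at $v_1$; finally the relocated edge contributes $[f(d_2+1,1)x_{v_2}-f(d_1,1)x_{v_1}]\,a\ge 0$. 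Summing, $\Delta\ge 0$, so $\rho(A_f(G''))\ge\rho(A_f(G))$.

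For the strict inequality I would argue that equality throughout forces $\mathbf{x}$ to be a Perron eigenvector of $A_f(G'')$ as well (the equality case of Theorem~\ref{theorem 2.3}); comparing the eigenvalue equation at the relocated pendent in $G$ and in $G''$ then yields $f(d_1,1)x_{v_1}=f(d_2+1,1)x_{v_2}$, impossible since $f$ is strictly increasing (so $f(d_2+1,1)>f(d_1,1)$) while $x_{v_2}\ge x_{v_1}>0$. I expect the main obstacle to be the inequality $x_{v_1}\le x_{v_2}$—especially signing $\alpha$ through the star-subgraph bound—and the careful grouping in the third step, where convexity of $f$ and the single counting inequality $n_2\ge n_1$ must be arranged to absorb both negative contributions at once.
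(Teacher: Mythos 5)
First, a caveat on the comparison itself: this paper never proves Theorem~\ref{theorem 2.10} --- it is imported from \cite{107} with only the surrounding setup stated --- so there is no in-paper proof to measure your argument against, and I judge it on its own terms. Your first two steps are correct and follow the natural route for such edge-shifting lemmas. The elimination of the pendent entries, the sign $\alpha\ge 0$ obtained by applying Theorem~\ref{theorem 2.1} to the weighted star centred at $v_{1}$, the resulting inequality $x_{v_{1}}\le x_{v_{2}}$, and the term-by-term estimate of $\mathbf{x}^{\top}A_{f}(G'')\mathbf{x}-\mathbf{x}^{\top}A_{f}(G)\mathbf{x}$ (the $v_{1}v_{2}$ edge handled by the defining inequality of property $P^{\ast}$, the $v_{1}v_{z_{i}}$/$v_{2}v_{z_{i}}$ pairs and the pendent edges handled by convexity, monotonicity, $n_{2}\ge n_{1}$ and $x_{v_{2}}\ge x_{v_{1}}$) all hold under exactly the hypotheses of property $P^{\ast}$, and give $\rho(A_{f}(G''))\ge\rho(A_{f}(G))$.

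The gap is in the strictness step. Property $P^{\ast}$, as defined in this paper, requires only $f'_{x}(x,y)\ge 0$; $f$ need not be strictly increasing, and indeed the constant function $f\equiv 1$ (so that $A_{f}(G)=A(G)$, the ordinary adjacency matrix) has property $P^{\ast}$. For such an $f$ your final contradiction collapses: the equality case gives $f(d_{1},1)x_{v_{1}}=f(d_{2}+1,1)x_{v_{2}}$, which for constant $f$ merely says $x_{v_{1}}=x_{v_{2}}$ --- and that is precisely the situation in which your Rayleigh-quotient difference can vanish, so nothing is contradicted. Since the paper applies Theorem~\ref{theorem 2.10} to arbitrary matrices with property $P^{\ast}$ (Theorems~\ref{theorem 4.1} and~\ref{theorem 5.1}), not only to $p$-Sombor matrices, this case cannot be waved away. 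The repair is local and standard: in the equality case $\mathbf{x}$ satisfies both $A_{f}(G)\mathbf{x}=\rho\mathbf{x}$ and $A_{f}(G'')\mathbf{x}=\rho\mathbf{x}$, and one should compare the two eigen-equations at $v_{2}$ rather than at the relocated pendent $v_{w^{\ast}}$. Passing from $G$ to $G''$, every weight on an edge incident with $v_{2}$ weakly increases, since $f(d_{1}-1,d_{2}+1)\ge f(d_{1},d_{2})$ by the $P^{\ast}$ inequality and $f(d_{2}+1,d_{z_{i}})\ge f(d_{2},d_{z_{i}})$, $f(d_{2}+1,1)\ge f(d_{2},1)$ by monotonicity, while $v_{2}$ also acquires the new edge $v_{2}v_{w^{\ast}}$ contributing the strictly positive amount $f(d_{2}+1,1)x_{w^{\ast}}$. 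Hence the equation at $v_{2}$ in $G''$ yields $\rho x_{v_{2}}\ge \rho x_{v_{2}}+f(d_{2}+1,1)x_{w^{\ast}}>\rho x_{v_{2}}$, a contradiction that uses only positivity of $f$ and of $\mathbf{x}$. With this one substitution your proof is complete for all matrices with property $P^{\ast}$; as written, it establishes the theorem only under the extra assumption that $f$ is strictly increasing (which does cover the $p$-Sombor matrices with $p\ge 1$ used elsewhere in the paper).
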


\begin{figure}[H]
	\centering
	\includegraphics[width=9cm]{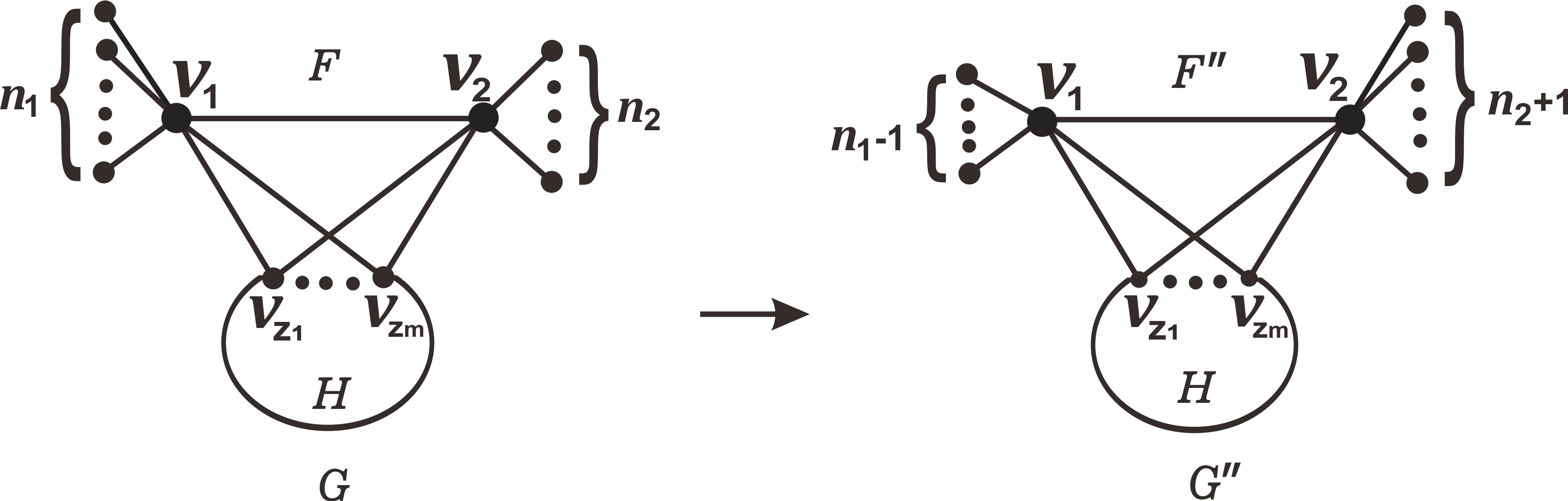}
	\caption{\small  The graphs $G$ and $G''$.}
	\label{fig5}
\end{figure}

\subsection{A lemma}

In this subsection, we give a lemma which will be used to consider the extremal unicyclic and bicyclic graphs in the subsequent sections.

\begin{lemma}\label{lemma 2.1}
	$f(x,y,p)=(x^{p}+y^{p})^{\frac{1}{p}}$ is decreasing in variable $p\geq2$ for $x,y\geq1$.
\end{lemma}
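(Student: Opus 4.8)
The plan is to prove the logarithmic version of the statement, namely that $h(p) := \ln f(x,y,p) = \frac{1}{p}\ln\!\big(x^{p}+y^{p}\big)$ is strictly decreasing in $p$; since $\exp$ is increasing and $f>0$, this is equivalent to the claim. First I would differentiate $h$ with respect to $p$ by the quotient rule, using $\frac{d}{dp}\ln(x^{p}+y^{p})=\frac{x^{p}\ln x+y^{p}\ln y}{x^{p}+y^{p}}$, to obtain
\[
h'(p)=\frac{1}{p^{2}}\left[\,p\cdot\frac{x^{p}\ln x+y^{p}\ln y}{x^{p}+y^{p}}-\ln\!\big(x^{p}+y^{p}\big)\right].
\]
Because $p^{2}>0$, the sign of $h'(p)$ is governed entirely by the bracketed term, so it suffices to show that the bracket is negative.

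Next I would simplify the bracket through the substitution $a=x^{p}$ and $b=y^{p}$, which are strictly positive since $x,y\ge 1$, and which satisfy $p\ln x=\ln a$ and $p\ln y=\ln b$. This collapses all explicit $p$-dependence: $p\big(x^{p}\ln x+y^{p}\ln y\big)=a\ln a+b\ln b$ and $x^{p}+y^{p}=a+b$, so the bracket becomes
\[
\frac{a\ln a+b\ln b}{a+b}-\ln(a+b)=\frac{a\ln\frac{a}{a+b}+b\ln\frac{b}{a+b}}{a+b}.
\]
The problem therefore reduces to the elementary inequality $a\ln a+b\ln b\le (a+b)\ln(a+b)$ for $a,b>0$.

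Then I would establish this reduced inequality directly. Since $0<a<a+b$ and $0<b<a+b$ and $\ln$ is strictly increasing, both $\ln\frac{a}{a+b}$ and $\ln\frac{b}{a+b}$ are strictly negative; multiplying by the positive weights $a$ and $b$ and summing gives a strictly negative numerator, and dividing by $a+b>0$ keeps it negative. Hence the bracket is negative, so $h'(p)<0$ for every $p$ in the stated range $p\ge 2$ (and in fact for every $p>0$), giving strict monotonicity.

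This is essentially the classical fact that $\ell^{p}$-norms of a fixed vector decrease in $p$, so I do not expect any serious obstacle. The only points requiring care are the sign bookkeeping after differentiation and the verification that the substitution $a=x^{p}$, $b=y^{p}$ legitimately removes the $p$-dependence from the bracket; once that is done, the decisive inequality follows from nothing beyond the monotonicity of the logarithm. The hypothesis $x,y\ge 1$ is used only to guarantee $a,b>0$, which is precisely what the reduced inequality needs.
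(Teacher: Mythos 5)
Your proof is correct and takes essentially the same route as the paper's: the paper differentiates $f$ directly, obtaining $f'_{p}=\big[\tfrac{x^{p}\ln x+y^{p}\ln y}{p(x^{p}+y^{p})}-\tfrac{\ln(x^{p}+y^{p})}{p^{2}}\big](x^{p}+y^{p})^{\frac{1}{p}}$, which is exactly $h'(p)\,f$ for your $h=\ln f$, and then reduces to the identical key inequality $x^{p}\ln x^{p}+y^{p}\ln y^{p}<(x^{p}+y^{p})\ln(x^{p}+y^{p})$, i.e.\ your $a\ln a+b\ln b<(a+b)\ln(a+b)$ with $a=x^{p}$, $b=y^{p}$. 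The only difference is cosmetic: you work with the logarithm and spell out the elementary justification of that inequality (monotonicity of $\ln$ with positive weights), which the paper asserts without proof.
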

\begin{proof}
	As
	\begin{center}
		$\displaystyle f'_{p}(x,y,p)=\big[\frac{x^{p}\ln x+y^{p}\ln y}{p(x^{p}+y^{p})}-\frac{\ln
			(x^{p}+y^{p})}{p^{2}}\big]\cdot(x^{p}+y^{p})^{\frac{1}{p}}$
	\end{center}
	\noindent  and
	\begin{center}
		$\displaystyle  p\big[x^{p}\ln x+y^{p}\ln y\big]=x^{p}\ln x^{p}+y^{p}\ln y^{p}<(x^{p}+y^{p})\ln (x^{p}+y^{p})$
	\end{center}
	\noindent for $x,y\geq1$ and $p\geq2$, we have $\displaystyle  f'_{p}(x,y,p)<0$. Hence $f(x,y,p)$ is decreasing in
	variable $p\geq2$ for $x,y\geq1$.
\end{proof}

\section{Extremal Trees}

In this section, we will think over the extremal trees with respect to the spectral radius of weighted adjacency matrices with property $P^{\ast}$.

Let $S_{d,n-d}$ be the double star of order $n\geq4$ with two centers $v_{1}$, $v_{2}$
such that $d_{1}=d$ and $d_{2}=n-d$ where $\displaystyle 2\leq d\leq \lfloor\frac{n}{2}\rfloor$ as shown in Figure \ref{fig1}. Then we have the following theorem.

\begin{figure}[H]
	\centering
	\includegraphics[width=5cm]{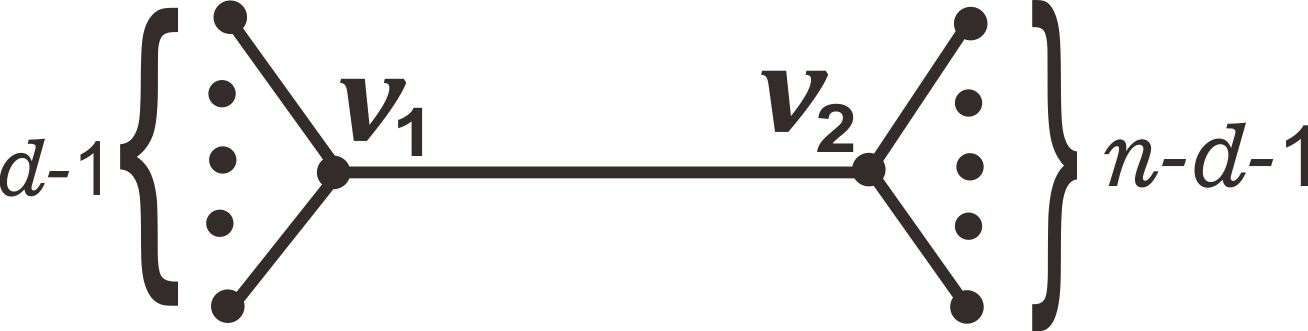}
	\caption{\small  The double star $S_{d,n-d}$ of order $n\geq6$.}
	\label{fig1}
\end{figure}

\begin{theorem}\label{theorem 4.1}
	Among all trees of order $n\geq6$, $S_{n}$, $S_{2,n-2}$, $S_{3,n-3}$ are, respectively, the unique trees with the first three maximum spectral radii of the weighted adjacency matrices with property $P^{\ast}$.
\end{theorem}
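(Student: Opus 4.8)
The plan is to combine the three edge-relocating operations of Theorems \ref{theorem 2.8}--\ref{theorem 2.10} with the equitable-partition tool of Theorem \ref{theorem 2.4}. I first record the behaviour of $\rho(A_f(\cdot))$ on the double stars. For $2\le d\le\lfloor n/2\rfloor$ the graph $S_{d,n-d}$ has centres of degrees $d\le n-d$ carrying $d-1\le n-d-1$ pendants, so it is exactly a graph $F$ of Theorem \ref{theorem 2.10} with $n_1=d-1$, $n_2=n-d-1$ and no common neighbours. Relocating one pendant from the degree-$d$ centre to the degree-$(n-d)$ centre turns $S_{d,n-d}$ into $S_{d-1,n-d+1}$, so Theorem \ref{theorem 2.10} gives $\rho(A_f(S_{d,n-d}))<\rho(A_f(S_{d-1,n-d+1}))$. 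Hence $\rho(A_f(S_{d,n-d}))$ strictly decreases in $d$, the ranking among double stars is $S_{2,n-2}>S_{3,n-3}>\cdots$, and the same relocation at $d=2$ produces $S_n$ and recovers $\rho(A_f(S_{2,n-2}))<\rho(A_f(S_n))$. Since for $6\le n\le7$ the only double stars are $S_{2,n-2},S_{3,n-3}$, while for $n\ge8$ every $S_{d,n-d}$ with $d\ge4$ already sits below $S_{3,n-3}$, it remains only to control trees of diameter at least $4$.

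Next I show every non-star tree can be pushed, by strictly $\rho$-increasing moves, to a double star. If $\mathrm{diam}(T)\ge4$, take a longest path $v_0v_1\cdots v_D$; the branch hanging at $v_2$ through $v_1$ is a tree on at least three vertices that is not a star centred at $v_2$ (since $v_1$ retains its own leaf $v_0$), so Theorem \ref{theorem 2.9} flattens it into pendants at $v_2$, strictly increasing $\rho(A_f)$ and lowering the diameter by at least one. Iterating reaches a diameter-$3$ tree, i.e. a double star. Consequently any non-star $T$ satisfies $\rho(A_f(T))\le\rho(A_f(S_{2,n-2}))$, with equality iff $T\cong S_{2,n-2}$: if $T$ is already a double star this is the ranking above, and otherwise $T$ reduces with a strict increase to some double star. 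Combined with $S_n$ being the unique maximum (a repeated application of Theorem \ref{theorem 2.9} flattening onto the largest-degree vertex, as already noted in the Introduction), this pins down $S_n$ and $S_{2,n-2}$ as the unique first and second trees.

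The delicate point is the third place, and I expect it to be the main obstacle: a diameter-$\ge4$ tree must be shown to lie \emph{strictly} below $S_{3,n-3}$, yet the naive flattening of Theorem \ref{theorem 2.9} can over-concentrate the tree, reducing it only to $S_{2,n-2}$ (or even to $S_n$) and so giving the useless bound $\rho(A_f(T))<\rho(A_f(S_{2,n-2}))$. The remedy is to steer the reduction to a double star whose smaller centre still has degree at least $3$. If some branch at $v_2$ has length at least $3$, I flatten its outer part by applying Theorem \ref{theorem 2.9} deeper in that branch, which creates a second centre of degree $\ge3$ and lands on an $S_{d,n-d}$ with $d\ge3$; if instead $v_2$ carries two branches of length exactly $2$, a Kelmans operation (Theorem \ref{theorem 2.8}) on the two branch-internal neighbours of $v_2$ merges the branches and again yields an $S_{d,n-d}$ with $d\ge3$. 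In either case the strict increase gives $\rho(A_f(T))<\rho(A_f(S_{d,n-d}))\le\rho(A_f(S_{3,n-3}))$, and the same bound holds a fortiori for double stars with $d\ge4$. The crux is therefore to verify that for every non-double-star tree at least one of these two routes is genuinely available.

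For the finitely many most concentrated configurations (for instance, a centre of degree $n-3$ carrying two length-$2$ branches, which is the extremal shape when $n=6$), should a routing argument feel unsafe I would instead compare spectral radii directly: both $T$ and $S_{3,n-3}$ admit an equitable partition coming from their branch symmetry, so Theorem \ref{theorem 2.4} reduces each $\rho(A_f)$ to the largest root of an explicit low-order characteristic polynomial, and the monotonicity/convexity inequalities defining property $P^{\ast}$ control the edge weights in the resulting comparison. Assembling these pieces shows every tree other than $S_n$, $S_{2,n-2}$, $S_{3,n-3}$ has spectral radius strictly below $\rho(A_f(S_{3,n-3}))$, so the three named trees are the unique realisers of the first three values; the conclusion for the $p$-Sombor matrix with $p\ge1$ follows because it has property $P^{\ast}$.
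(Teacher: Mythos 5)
Your overall strategy (order the double stars by Theorem \ref{theorem 2.10}, then push every non-double-star tree up to a double star by strictly $\rho$-increasing operations) is the same as the paper's, and your treatment of the first two places is sound. The gap sits exactly at the point you flag as the crux, and your proposed resolution fails there. Consider the double brooms $D(a,b)$ with $a,b\ge 1$, $a+b=n-3$: a centre $c$ of degree $2$ adjacent to two vertices $u_{1},u_{2}$, where $u_{1}$ carries $a$ pendants and $u_{2}$ carries $b$ pendants. These trees have diameter $4$ and every branch at $c$ has depth exactly $2$, so they fall under your second route; but the Kelmans operation on the two branch-internal neighbours $u_{1},u_{2}$ (their common neighbour $c$ is retained by both) turns $D(a,b)$ into the double star with centres $u_{2}$ and $c$, and $c$ still has degree $2$ --- that is, it yields $S_{2,n-2}$, not an $S_{d,n-d}$ with $d\ge 3$. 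For this family your argument therefore only gives $\rho(A_{f}(D(a,b)))<\rho(A_{f}(S_{2,n-2}))$, which says nothing about third place. Moreover these are not ``finitely many most concentrated configurations'': there are about $\lfloor (n-3)/2\rfloor$ such trees for each $n$, so the exceptional set grows with $n$, and your fallback via Theorem \ref{theorem 2.4} would require comparing largest roots of quotient polynomials uniformly in $a,b,n$ and for \emph{every} $f$ with property $P^{\ast}$ --- precisely the nontrivial content that is being deferred.

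The fix, which is what the paper does (Subcase 2.1 of its proof), is to choose the Kelmans pair differently here: since $a+b=n-3\ge 3$, assume $a\ge 2$ and apply the Kelmans operation to $\{c,u_{2}\}$ (the centre and one branch-top) instead of $\{u_{1},u_{2}\}$. This replaces the edge $u_{1}c$ by $u_{1}u_{2}$ and produces the double star with adjacent centres $u_{1}$ (degree $a+1\ge 3$) and $u_{2}$ (degree $b+2\ge 3$); Theorem \ref{theorem 2.8} gives the strict increase, and the chain of Theorem \ref{theorem 2.10} moves this double star up to $S_{3,n-3}$, so $\rho(A_{f}(D(a,b)))<\rho(A_{f}(S_{3,n-3}))$. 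More generally, the paper organizes the non-double-star case around a $P_{5}=v_{1}v_{2}v_{3}v_{4}v_{5}$ contained in $T$ and always performs the Kelmans operation on a pair chosen so that the resulting tree retains two vertices of degree at least $3$ (on $v_{3},v_{4}$ when $d_{2}\ge 3$; on $v_{2},v_{4}$ when $d_{2}=d_{4}=2$ and $d_{3}>2$; on $v_{3},v_{4}$ again when $d_{2}=d_{3}=d_{4}=2$), which is exactly the invariant your routing loses on the double brooms. With that one correction your argument goes through and coincides with the paper's.
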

\begin{proof} We consider the following two cases to prove the theorem.
	
	$\textbf{Case 1.}$ $T$ is a double star and $T\ncong S_{n}, S_{2,n-2}, S_{3,n-3}$.
	
	According to Theorem \ref{theorem 2.10}, we obtain
	\begin{center}
		$\rho(A_{f}(T))<\rho(A_{f}(S_{3,n-3}))<\rho(A_{f}(S_{2,n-2}))<\rho(A_{f}(S_{n}))$.
	\end{center}

	$\textbf{Case 2.}$ $T$ is not a double star.
	
	Since $T$ is not a double star, $T$ contains $P_{5}$ as its subgraph as shown in Figure \ref{fig12112}. Without loss of generality, we assume that $d_{2}\geq d_{4}\geq2$.
	\begin{figure}[H]
		\centering
		\includegraphics[width=5cm]{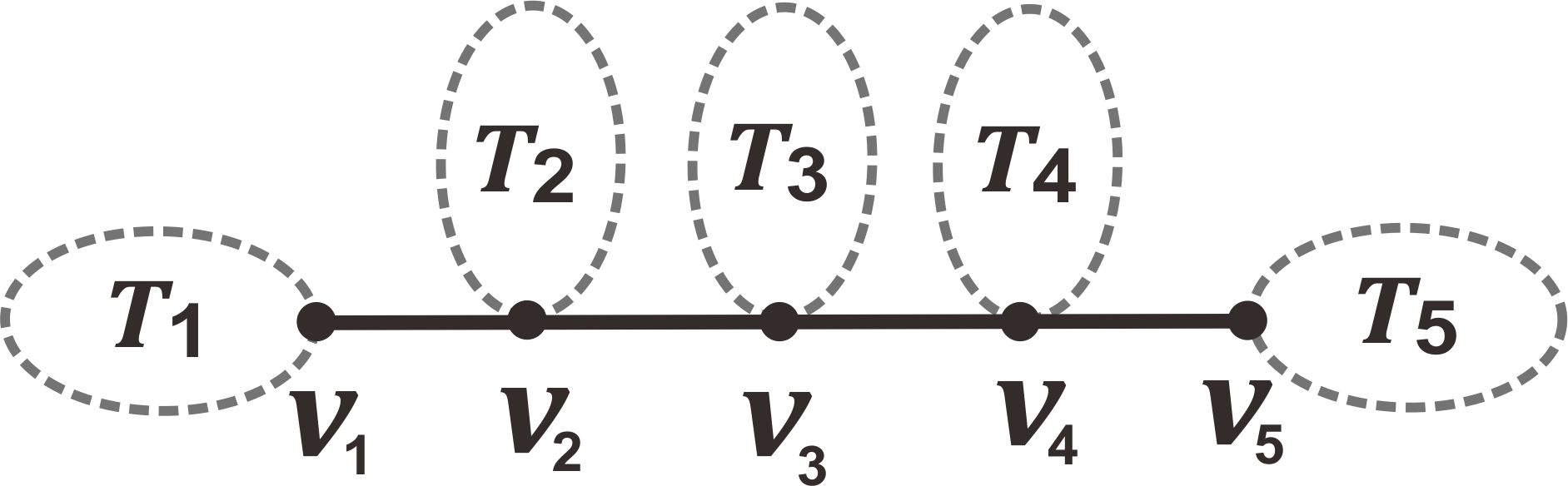}
		\caption{\small  The tree $T$ for Subcase 2.2.}
		\label{fig12112}
	\end{figure}
	$\textbf{Subcase 2.1.}$ $d_{2}\geq3$.
	
	We use Kelmans operation on the adjacent vertices $v_{3}$ and $v_{4}$ as shown in Figure \ref{fig161}. By Theorems \ref{theorem 2.8}, \ref{theorem 2.9} and \ref{theorem 2.10}, we obtain $\rho(A_{f}(T))<\rho(A_{f}(S_{3,n-3}))$.
	
	\begin{figure}[H]
		\centering
		\includegraphics[width=10cm]{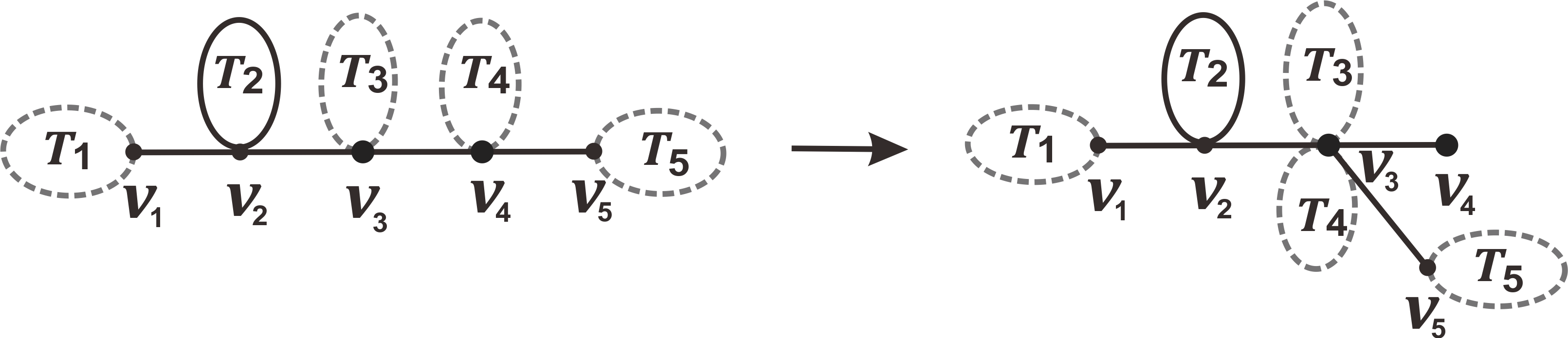}
		\caption{\small  The Kelmans operation for Subcase 2.1.}
		\label{fig161}
	\end{figure}

	$\textbf{Subcase 2.2.}$  $d_{2}=d_{4}=2$.
	
	Without loss of generality, we assume that $d_{1}\geq d_{5}\geq1$.  If $d_{3}=2$, then $d_{1}\geq2$. We use Kelmans operation on the adjacent vertices $v_{3}$ and $v_{4}$ as shown in Figure \ref{fig162}. Analogically, we have $\rho(A_{f}(T))<\rho(A_{f}(S_{3,n-3}))$.
	
	\begin{figure}[H]
		\centering
		\includegraphics[width=10cm]{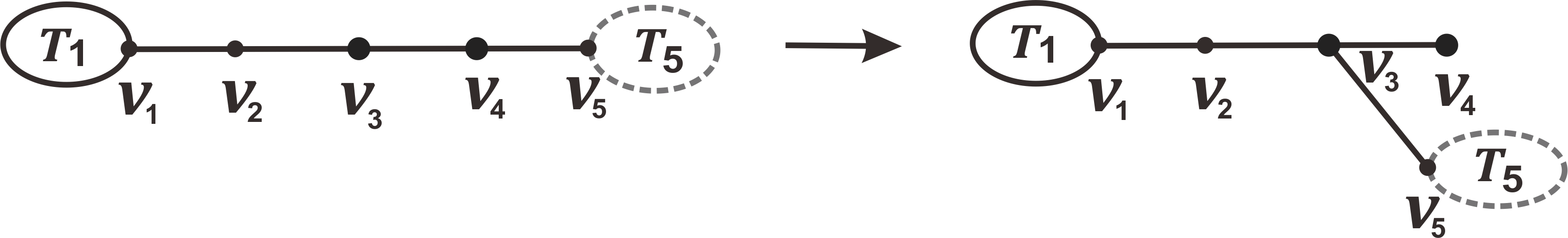}
		\caption{\small  The Kelmans operation on the adjacent vertices $v_{3}$ and $v_{4}$.}
		\label{fig162}
	\end{figure}
	
	Otherwise, $d_{3}>2$. We use Kelmans operation on the vertices $v_{2}$ and $v_{4}$ as shown in Figure \ref{fig163}. We also can obtain $\rho(A_{f}(T))<\rho(A_{f}(S_{3,n-3}))$.
	\begin{figure}[H]
		\centering
		\includegraphics[width=10cm]{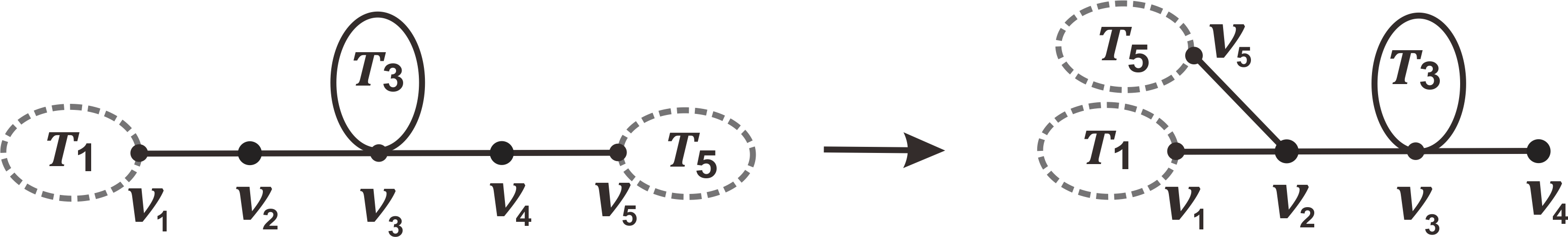}
		\caption{\small  The Kelmans operation on the vertices $v_{2}$ and $v_{4}$.}
		\label{fig163}
	\end{figure}
	Hence, we have $S_{n}$, $S_{2,n-2}$, $S_{3,n-3}$
	are, respectively, the unique trees with the first three maximum spectral radii of the weighted adjacency matrices with property $P^{\ast}$.\textbf{}\end{proof}

The result also holds for the $p$-Sombor matrix with $p\geq1$. Thus we have the following theorem.

\begin{theorem}\label{theorem 4.2}
	Among all trees of order $n\geq6$, $S_{n}$, $S_{2,n-2}$, $S_{3,n-3}$ are, respectively, the unique trees with the first three maximum $p$-Sombor spectral radii for $p\geq1$.
\end{theorem}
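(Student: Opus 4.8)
The plan is to obtain Theorem~\ref{theorem 4.2} as an immediate consequence of Theorem~\ref{theorem 4.1}. The bridge is the fact already recorded in the introduction: for $p\geq1$ the $p$-Sombor matrix $\textbf{S}_{\textbf{p}}(G)$ is exactly the weighted adjacency matrix $A_{f}(G)$ associated with the symmetric function $f(x,y)=(x^{p}+y^{p})^{\frac{1}{p}}$. Hence it suffices to verify that this particular $f$ enjoys property $P^{\ast}$, after which Theorem~\ref{theorem 4.1} applies word for word.

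First I would check that $f(x,y)=(x^{p}+y^{p})^{\frac{1}{p}}$ is positive, increasing and convex in $x$ on the domain $x,y\geq1$. Positivity is clear. Differentiating gives $f'_{x}=x^{p-1}(x^{p}+y^{p})^{\frac{1-p}{p}}>0$, so $f$ is increasing in $x$. A second differentiation and a factorization yield
\begin{equation*}
	f''_{x}=(p-1)\,x^{p-2}\,y^{p}\,(x^{p}+y^{p})^{\frac{1-2p}{p}},
\end{equation*}
which is nonnegative precisely because $p\geq1$; this gives convexity in $x$.

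Next I would verify the spread condition: whenever $x_{1}+y_{1}=x_{2}+y_{2}$ and $|x_{1}-y_{1}|>|x_{2}-y_{2}|$ one has $f(x_{1},y_{1})\geq f(x_{2},y_{2})$. Fixing the sum $s=x+y$ and writing $x=\frac{s+t}{2}$, $y=\frac{s-t}{2}$ with $t=x-y\geq0$, the claim reduces to showing that $t\mapsto\big((s+t)^{p}+(s-t)^{p}\big)^{\frac{1}{p}}$ is nondecreasing for $t\geq0$. Its derivative is a positive multiple of $(s+t)^{p-1}-(s-t)^{p-1}$, which is nonnegative for $p\geq1$ because $u\mapsto u^{p-1}$ is nondecreasing. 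This completes the verification that $f$ has property $P^{\ast}$, and Theorem~\ref{theorem 4.1} then delivers the stated ordering of spectral radii.

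Since the argument is a routine verification followed by citing the earlier theorem, I do not expect a genuine obstacle. The only mildly delicate point is the sign analysis of $f''_{x}$, where one must factor the expression and track the sign through the coefficient $(p-1)$; this is exactly the place where the hypothesis $p\geq1$ is essential, since for $0<p<1$ the function would be concave in $x$ and property $P^{\ast}$ would fail.
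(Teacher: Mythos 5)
Your proposal is correct and follows exactly the paper's route: the paper deduces Theorem~\ref{theorem 4.2} from Theorem~\ref{theorem 4.1} via the observation (stated as ``clearly'' in the introduction) that for $p\geq1$ the $p$-Sombor matrix is a weighted adjacency matrix with property $P^{\ast}$. Your only addition is to spell out that verification --- the computation of $f'_{x}$, the factorization $f''_{x}=(p-1)x^{p-2}y^{p}(x^{p}+y^{p})^{\frac{1-2p}{p}}$, and the monotonicity in the spread $|x-y|$ at fixed sum --- all of which are accurate.
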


\noindent  \textbf{Remark 3.1.} For the weighted adjacency matrix with property $P^{\ast}$ and $n=4,5$, we have
\begin{center}
	$\rho(A_{f}(P_{4}))=\rho(A_{f}(S_{2,2}))<\rho(A_{f}(S_{4}))$
\end{center}
\noindent and
\begin{center}
	$\rho(A_{f}(P_{5}))<\rho(A_{f}(S_{2,3}))<\rho(A_{f}(S_{5}))$.
\end{center}
It is a direct consequence of Theorem \ref{theorem 2.8} and also holds for the $p$-Sombor matrix with $p\geq1$.

\noindent  \textbf{Remark 3.2.} Liu et al. \cite{U6} has obtained $S_{n}$ is the unique tree with the largest $p$-Sombor spectral radius for $p\geq1$. We consider the trees with the first three maximum $p$-Sombor spectral radii with $p\geq1$. For the largest case, we obtain the same result.

\section{Extremal unicyclic graphs}

From now on, we assume that $\displaystyle f(x,y,p)=(x^{p}+y^{p})^{\frac{1}{p}}$ with $x,y\geq1$ and $p\geq2$, then $\displaystyle f(x,y,p)>0$ is strictly increasing and convex in variable $x$. And we consider the $p$-Sombor matrix $\displaystyle
\textbf{S}_{\textbf{p}}(G)=[f(d_{i},d_{j},p)]_{n\times n}$ with $p\geq2$ which is the weighted adjacency matrix with property $P^{\ast}$ of $G$.

In this section, we will think about the extremal unicyclic graphs with respect to the $p$-Sombor spectral radii.

The unicyclic graph $S_{n}+e$ with $n\geq4$ is obtained from $S_{n}$ by adding an edge as shown in Figure \ref{fig102} (the first one). The unicyclic graphs $U_{1}$, $U_{2}$, $U_{3}$, $U_{4}$ and $U_{5}$ that will be considered in this section are also shown in Figure \ref{fig102}.

\begin{figure}[H]
	\centering
	\includegraphics[width=10cm]{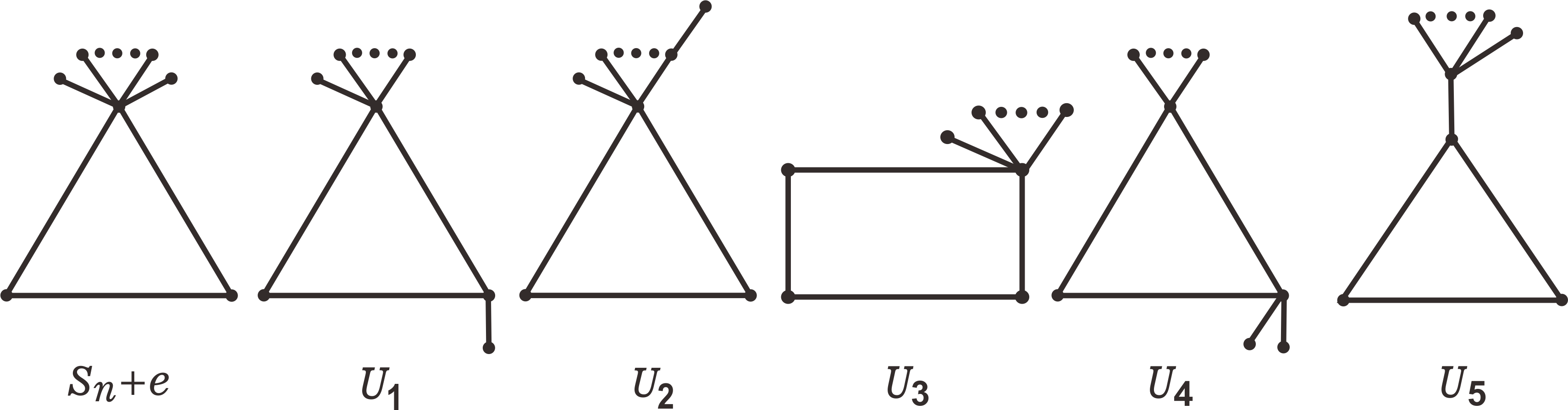}
	\caption{\small  The unicyclic graphs $S_{n}+e$, $U_{1}$, $U_{2}$, $U_{3}$, $U_{4}$ and $U_{5}$ of order $n\geq7$.}
	\label{fig102}
\end{figure}

We first recall a result of the unicyclic graphs with respect to spectral radius of weighted adjacency matrices with property $P^{\ast}$.

\begin{theorem}\label{theorem 2.11}\cite{107} If $G$ is a unicyclic graph of order $n\geq7$ and is not isomorphic to the graphs $S_{n}+e$, $U_{1}$, $U_{2}$, $U_{3}$, $U_{4}$ and $U_{5}$ (as shown in Figure \ref{fig102}), then $\displaystyle \rho(A_{f}(G))<\rho(A_{f}(U_{4}))$. In addition, $\displaystyle \rho(A_{f}(U_{3}))<\rho(A_{f}(U_{2}))<\rho(A_{f}(U_{1}))<\rho(A_{f}(S_{n}+e))$.
\end{theorem}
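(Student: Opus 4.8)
The plan is to derive both assertions from the three monotonicity operations established above---the Kelmans operation (Theorem~\ref{theorem 2.8}), the tree-compression operation (Theorem~\ref{theorem 2.9}), and the pendant-moving operation (Theorem~\ref{theorem 2.10})---each of which strictly increases $\rho(A_{f})$ whenever it actually changes the graph. Throughout, I would write the unicyclic graph $G$ as its unique cycle $C_{g}=v_{1}v_{2}\cdots v_{g}v_{1}$ together with the rooted trees $T_{1},\ldots,T_{g}$ hanging at the cycle vertices. The governing intuition is that, for a weight with property $P^{\ast}$, the spectral radius grows as the degrees are concentrated on a single short cycle, so the global maximizer should be $S_{n}+e$, a triangle carrying all of the remaining $n-3$ vertices as pendants on one of its three vertices.

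For the ordering $\rho(A_{f}(U_{3}))<\rho(A_{f}(U_{2}))<\rho(A_{f}(U_{1}))<\rho(A_{f}(S_{n}+e))$ I expect a short direct argument. I anticipate that these four graphs form a single ``pendant-shifting'' chain: each is obtained from its predecessor by moving exactly one pendant from the lighter endpoint of a triangle edge toward the heavier endpoint, which is precisely the configuration of Theorem~\ref{theorem 2.10}. Applying that theorem once to each consecutive pair would then yield the three strict inequalities at a stroke, with no eigenvalue computation required. Should the figures not line up quite so neatly, the fallback is to write down the equitable partition of each graph (cycle vertices grouped by pendant load, together with the pendant classes), pass to the small quotient matrix by Theorem~\ref{theorem 2.4}, and compare the largest roots of the resulting characteristic polynomials using the property-$P^{\ast}$ inequalities on $f$.

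For the first assertion I would run a reduction in three stages. First, whenever some $T_{i}$ is not a star centred at $v_{i}$, Theorem~\ref{theorem 2.9} compresses it to pendant edges at $v_{i}$ and strictly raises $\rho(A_{f})$; after finitely many such steps $G$ becomes a ``sun'', namely the cycle $C_{g}$ bearing $a_{i}\ge 0$ pendants at each $v_{i}$. Second, if $g\ge 4$ I would choose two cycle vertices and apply a Kelmans operation (Theorem~\ref{theorem 2.8}) so that the girth drops, the graph remains unicyclic, and $\rho(A_{f})$ strictly increases, iterating down to $g=3$. Third, on the resulting triangle $v_{1}v_{2}v_{3}$ with pendant counts $a_{1},a_{2},a_{3}$ I would use the pendant-moving operation repeatedly to concentrate pendants onto fewer triangle vertices. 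Since $G$ is assumed not to be one of the six listed graphs, at least one of these steps is genuine, so $\rho(A_{f}(G))$ is strictly smaller than the spectral radius of the graph it funnels into; a final explicit comparison of the finitely many near-extremal suns should then certify that every such $G$ lands strictly below $\rho(A_{f}(U_{4}))$.

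The hardest part will lie in this first assertion rather than in the chain. The cycle-shrinking step must be arranged so that the chosen Kelmans operation truly lowers the girth without destroying the single cycle and with $G\ncong G'$, which forces a careful choice of the two operation vertices and separate attention to the smallest girths. Even more delicate is the bookkeeping that closes the reduction: one must verify that the six listed graphs are precisely those whose weighted spectral radius can meet or exceed the threshold, and then correctly locate $U_{4}$ (and $U_{5}$) in the ranking so that $\rho(A_{f}(U_{4}))$ is a valid uniform bound for all remaining $G$. For a general property-$P^{\ast}$ weight the mutual order of $U_{3}$, $U_{4}$, $U_{5}$ need not be forced by the operations alone---this is presumably why both $U_{4}$ and $U_{5}$ are kept as candidates and why the bound is stated as $\rho(A_{f}(U_{4}))$---so this last step is likely to require the explicit quotient-matrix estimates described above, possibly supplemented by the crude bound of Theorem~\ref{theorem 2.7} to dispatch the graphs with longer cycles.
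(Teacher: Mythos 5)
You should first be aware that the paper does not prove this statement at all: Theorem~\ref{theorem 2.11} is recalled verbatim from reference \cite{107} (the same source as Theorems~\ref{theorem 2.8}--\ref{theorem 2.10}), so your attempt has to stand on its own, and as it stands it has two genuine gaps. The first concerns the chain $\rho(A_{f}(U_{3}))<\rho(A_{f}(U_{2}))<\rho(A_{f}(U_{1}))<\rho(A_{f}(S_{n}+e))$. Your plan rests on these four graphs forming a single pendant-shifting chain under Theorem~\ref{theorem 2.10}, but that operation only relocates pendant vertices between two adjacent vertices and therefore never changes the cycle length. From the quotient matrix in Lemma~\ref{lemma 4.2} one sees that $U_{3}$ is the $C_{4}$ with $n-4$ pendants at one vertex (girth $4$), while $U_{2}$, $U_{1}$ and $S_{n}+e$ all have girth $3$; hence no application of Theorem~\ref{theorem 2.10} can link $U_{3}$ to $U_{2}$. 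Worse, the only nontrivial Kelmans move on $U_{3}$ produces $U_{1}$ directly, overshooting $U_{2}$, so the inequality $\rho(A_{f}(U_{3}))<\rho(A_{f}(U_{2}))$ cannot be obtained from the three operations at all; it requires a direct spectral estimate (of the quotient-matrix type used in Lemmas~\ref{lemma 4.2} and~\ref{lemma 5.2}) valid for every weight with property $P^{\ast}$, which is exactly the part your fallback sentence leaves unproved.

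The second and more serious gap is in the main assertion. Your funnelling scheme (compress hanging trees, shrink the cycle, concentrate pendants) shows that every non-exceptional $G$ strictly increases into some graph higher in the poset, but the graph it lands on is typically $U_{1}$, $U_{3}$ or $S_{n}+e$, and an inequality such as $\rho(A_{f}(G))<\rho(A_{f}(S_{n}+e))$ or $\rho(A_{f}(G))<\rho(A_{f}(U_{3}))$ is useless for the stated bound by $\rho(A_{f}(U_{4}))$: the theorem does not order $U_{3},U_{4},U_{5}$ among themselves, and for the $p$-Sombor case this paper proves $\rho(\textbf{S}_{\textbf{p}}(U_{4}))<\rho(\textbf{S}_{\textbf{p}}(U_{3}))$, so landing at $U_{3}$ gives a bound in the wrong direction. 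A concrete failure case: let $G$ be the $C_{4}$ with $n-5$ pendants at one vertex and one pendant at an adjacent vertex ($n\geq7$), which is none of the six exceptional graphs. The pendant move of Theorem~\ref{theorem 2.10} sends $G$ to $U_{3}$, and every nontrivial Kelmans move sends it to $S_{n}+e$ or to another non-exceptional graph, so the operations alone never certify $\rho(A_{f}(G))<\rho(A_{f}(U_{4}))$. The same obstruction occurs for whole families (longer-cycle suns, subdivided stars, $U_{5}$-like graphs with the heavy vertex off the cycle), so the closing step is not ``a final explicit comparison of finitely many near-extremal suns''; it is the bulk of the proof, requiring a case analysis in which each case either terminates exactly at $U_{4}$ (or $U_{5}$) or is handled by explicit eigenvalue estimates such as those in Lemma~\ref{lemma 4.2} and Theorem~\ref{theorem 2.7}. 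This is precisely the content carried out in \cite{107} and only gestured at in your proposal.
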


Next, we consider the unique unicyclic graphs with the first five maximum $p$-Sombor spectral radii.

\begin{lemma}\label{lemma 4.1}
	$U_{4}$ and $U_{5}$ are the unicyclic graphs of order $n\geq7$ as shown in Figure \ref{fig102}. Then we have $\displaystyle \rho(\textbf{S}_{\textbf{p}}(U_{5}))<\rho(\textbf{S}_{\textbf{p}}(U_{4}))$.
\end{lemma}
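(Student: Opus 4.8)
The reason this comparison needs a separate lemma is instructive: Theorem \ref{theorem 2.11} orders $S_{n}+e,U_{1},U_{2},U_{3}$ for \emph{every} weighted adjacency matrix with property $P^{\ast}$, but it does not separate $U_{4}$ from $U_{5}$. This signals that no single Kelmans-type move (Theorems \ref{theorem 2.8}--\ref{theorem 2.10}) sends $U_{5}$ to $U_{4}$, so a monotone edge-moving argument cannot decide the inequality; the ordering must instead be extracted from the explicit $p$-Sombor weights. My plan is therefore to reduce each spectral radius to the Perron root of a small matrix and then compare those matrices directly. Both $U_{4}$ and $U_{5}$ consist of a short cycle carrying blocks of pendant vertices, so their vertex sets split into a bounded number of orbits (the cycle/center vertices, and one class of pendants at each center). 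All pendants hanging at a common vertex are interchangeable and hence carry a common principal-eigenvector value, so these partitions are equitable for $\textbf{S}_{\textbf{p}}(U_{4})$ and $\textbf{S}_{\textbf{p}}(U_{5})$; by Theorem \ref{theorem 2.4} we get $\rho(\textbf{S}_{\textbf{p}}(U_{4}))=\rho(B_{4})$ and $\rho(\textbf{S}_{\textbf{p}}(U_{5}))=\rho(B_{5})$ for the associated quotient matrices $B_{4},B_{5}$.

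First I would write down $B_{4}$ and $B_{5}$ explicitly, recording the relevant degrees (several of which are $n-2$, $n-3$, $3$, $2$) and filling the off-diagonal blocks with the $p$-Sombor weights $f(d_{i},d_{j},p)=(d_{i}^{p}+d_{j}^{p})^{1/p}$ scaled by the block sizes. This produces two matrices of fixed small order, independent of $n$, whose characteristic polynomials $\phi_{4}(\lambda)$ and $\phi_{5}(\lambda)$ are low-degree in $\lambda$ with coefficients depending on $n$ and $p$. Since each $\textbf{S}_{\textbf{p}}(U_{i})$ is nonnegative and irreducible, the Perron--Frobenius theorem makes $\rho(B_{i})$ the largest real root of $\phi_{i}$ and forces $\phi_{i}(\lambda)>0$ for all $\lambda>\rho(B_{i})$. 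Hence it suffices to prove $\phi_{5}(\rho(B_{4}))>0$, i.e. that $\rho(B_{4})$ already lies to the right of the largest root of $\phi_{5}$, which yields $\rho(B_{4})>\rho(B_{5})$. As a fallback if the polynomial bookkeeping is unwieldy, I would instead use Theorem \ref{theorem 2.3}: take the unit principal eigenvector $\textbf{x}$ of $\textbf{S}_{\textbf{p}}(U_{5})$, transfer it to a vector on $V(U_{4})$ through the natural vertex correspondence, and show the quadratic form $\textbf{x}^{\top}\textbf{S}_{\textbf{p}}(U_{4})\textbf{x}$ strictly exceeds $\rho(\textbf{S}_{\textbf{p}}(U_{5}))$ by controlling the few edges where the two graphs differ.

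The main obstacle is that this is a two-parameter sign analysis in $n$ and $p$: the weights $f(d_{i},d_{j},p)$ are not elementary, and their relative sizes shift as $p$ varies. I would tame the $p$-dependence using Lemma \ref{lemma 2.1}, which for $p\ge 2$ and degrees at least $1$ gives the uniform sandwich $\max\{d_{i},d_{j}\}<f(d_{i},d_{j},p)\le\sqrt{d_{i}^{2}+d_{j}^{2}}$ (the lower bound is the $p\to\infty$ limit, the upper bound the Sombor value at $p=2$). These bounds replace the transcendental weights by algebraic quantities pinned between the $\ell^{\infty}$ and $\ell^{2}$ cases. For the $n$-dependence I would split off the leading-order behaviour for large $n$, where the dominant weights involving degrees $n-2$ and $n-3$ control the sign of the decisive coefficient of $\phi_{5}-\phi_{4}$, and then check the finitely many residual cases $n=7,8,\dots$ directly from the explicit quotient matrices. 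I expect the crux to be showing that this decisive coefficient retains its sign for \emph{all} $p\ge 2$ and \emph{all} $n\ge 7$ at once, rather than verifying any single evaluation.
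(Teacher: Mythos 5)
Your primary route---equitable partitions, Theorem \ref{theorem 2.4}, and a comparison of the characteristic polynomials $\phi_{4},\phi_{5}$ of the two quotient matrices (your $B_{4},B_{5}$, not the bicyclic graphs of Section 5)---is a legitimately different strategy from the paper's proof of this lemma, and the partitions you describe are indeed equitable; the paper itself uses this device, but only in Lemma \ref{lemma 4.2} for the pair $U_{3},U_{4}$. However, your central deduction is logically invalid as stated: from Perron--Frobenius you correctly get that $\phi_{5}(\lambda)>0$ for all $\lambda>\rho(B_{5})$, but the converse implication fails, so $\phi_{5}(\rho(B_{4}))>0$ does \emph{not} mean that ``$\rho(B_{4})$ already lies to the right of the largest root of $\phi_{5}$''---a monic polynomial is also positive below its smallest real root and between suitable pairs of roots. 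The usable implication is the contrapositive one in the other polynomial: if $\phi_{4}\bigl(\rho(B_{5})\bigr)<0$ then $\rho(B_{5})<\rho(B_{4})$. This is exactly the direction the paper exploits in Lemma \ref{lemma 4.2}, where an upper bound on the quotient polynomial of $U_{3}$ produces a point of negativity and hence a \emph{lower} bound on $\rho(\textbf{S}_{\textbf{p}}(U_{3}))^{2}$, which is then compared against an \emph{upper} bound on $\rho(\textbf{S}_{\textbf{p}}(U_{4}))^{2}$ coming from Theorems \ref{theorem 2.1} and \ref{theorem 2.7}. Your plan can be repaired (evaluate $\phi_{4}$ at $\rho(B_{5})$, or verify that all leading principal minors of $\rho(B_{4})I-B_{5}$ are positive), but as written the key step is wrong.

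Beyond this, the proposal defers exactly the part that constitutes the proof: you state that the crux is a sign analysis uniform in $(n,p)$ and do not carry it out. Note that the paper never achieves such uniformity either---it proves the inequality analytically only for $n\geq10$, and for $7\leq n\leq 9$ it combines Lemma \ref{lemma 2.1} (monotonicity of the weights, hence of the spectral radius, in $p$) with a MATLAB check that $\rho(\textbf{S}_{\textbf{p=2}}(U_{5}))<\lim_{p\to\infty}\rho(\textbf{S}_{\textbf{p}}(U_{4}))$; your sandwich $\max\{d_{i},d_{j}\}<f(d_{i},d_{j},p)\leq\sqrt{d_{i}^{2}+d_{j}^{2}}$ is the right tool here, but you never specify how the finitely many small cases are actually verified for all $p\geq2$ simultaneously. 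Your fallback via Theorem \ref{theorem 2.3} is in fact the paper's actual method, yet the one-sentence sketch omits its real content: one must first establish orderings among the principal-eigenvector entries of $U_{5}$ (the paper needs $x_{1}>x_{2}>x_{w}>x_{3}$, which relies on $f(1,n-3,p)>f(2,3,p)+f(2,2,p)$ and hence on $n\geq10$), and in the complementary case $x_{1}\leq x_{2}$ the transferred quadratic form does not beat $\rho(\textbf{S}_{\textbf{p}}(U_{5}))$ on $U_{4}$ directly---the paper must pass through an intermediate graph $U_{6}$ and then invoke the Kelmans result, Theorem \ref{theorem 2.8}, to get from $U_{6}$ to $U_{4}$. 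Without these ingredients neither of your two routes closes the argument.
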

\begin{proof} We consider the following two cases.
	
	$\textbf{Case 1.}$ $n\geq10$.
	
	Suppose that the vertices of $U_{5}$ are denoted as Figure \ref{fig10} and $\textbf{x}$ be the principal eigenvector of $U_{5}$. We assume that the pendent vertices belonging to $N(v_{1})$ are denoted as $v_{w}$. As $\textbf{S}_{\textbf{p}}(G)\textbf{x}=\rho(\textbf{S}_{\textbf{p}}(G))\textbf{x}$, the entries corresponding to these pendent vertices in $\textbf{x}$ are equal.
	
	\begin{figure}[H]
		\centering
		\includegraphics[width=7cm]{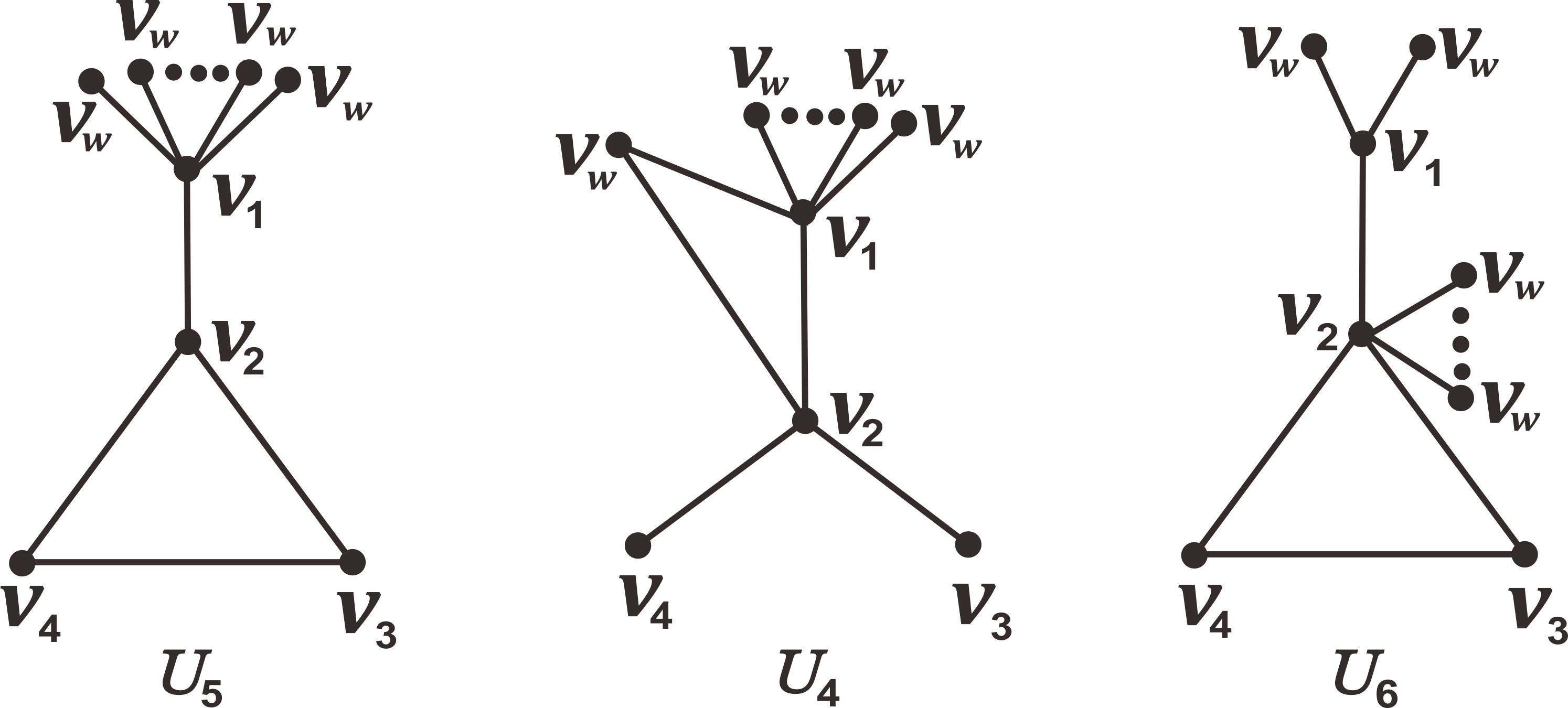}
		\caption{\small  The unicyclic graphs $U_{4}$, $U_{5}$ and $U_{6}$ of order $n$.}
		\label{fig10}
	\end{figure}
	
	Moreover, because $\textbf{S}_{\textbf{p}}(U_{5})\textbf{x}=\rho(\textbf{S}_{\textbf{p}}(U_{5}))\textbf{x}$, we get
	\begin{center}
		\noindent $\displaystyle \rho(\textbf{S}_{\textbf{p}}(U_{5}))x_{1}=(n-4)f(1,n-3,p)x_{w}+f(3,n-3,p)x_{2},$
	\end{center}
	\noindent and
	\begin{center}
		\noindent $\displaystyle \rho(\textbf{S}_{\textbf{p}}(U_{5}))x_{2}=f(3,n-3,p)x_{1}+2f(2,3,p)x_{3}.$
	\end{center}
	Hence if $x_{1}\leq x_{2}$, then $x_{w}<x_{3}$. Meanwhile,
	\begin{center}
		\noindent $\displaystyle \rho(\textbf{S}_{\textbf{p}}(U_{5}))x_{3}=f(2,3,p)x_{2}+f(2,2,p)x_{4}$
	\end{center}
	and
	\begin{center}
		$\displaystyle \rho(\textbf{S}_{\textbf{p}}(U_{5}))x_{w}=f(1,n-3,p)x_{1}.$
	\end{center}
	We obtain if $x_{1}>x_{2}$, then $x_{2}>x_{3}$ for $n\geq10$. Thus we have $x_{1}>x_{2}>x_{3}$. According to Lemma \ref{lemma 2.1}, we obtain $\displaystyle f(1,n-3,p)>f(2,3,p)+f(2,2,p)$ for $n\geq10$. It is easy to see $\displaystyle x_{1}>x_{2}>x_{w}>x_{3}$. Then we consider the following two cases.
	
	\textbf{Subcase 1.1.} $\displaystyle x_{1}>x_{2}$.
	
	Replace edge $v_{3}v_{4}$ in $U_{5}$ by a new edge
	$v_{2}v_{w}$ to obtain $U_{4}$ as shown in Figure \ref{fig10}. Then we have
	$$\begin{aligned}
		&\quad
		\textbf{x}^{\top}\textbf{S}_{\textbf{p}}(U_{4})\textbf{x}-\textbf{x}^{\top}\textbf{S}_{\textbf{p}}(U_{5})\textbf{x}=\\
		& 2\big[f(4,n-3,p)-f(3,n-3,p)\big]x_{1}x_{2}+4\big[f(1,4,p)-f(2,3,p)\big]x_{2}x_{3}+\\
		& 2\big[f(2,n-3,p)-f(1,n-3,p)\big]x_{1}x_{w}+2\big[f(2,4,p)x_{2}x_{w}-f(2,2,p)x_{3}x_{4}\big]>0.
	\end{aligned}$$
	
	\noindent  Thus by Theorem \ref{theorem 2.3}, we obtain $\displaystyle \rho(\textbf{S}_{\textbf{p}}(U_{5}))<\rho(\textbf{S}_{\textbf{p}}(U_{4}))$.
	
	\textbf{Case 1.2.} $\displaystyle x_{1}\leq x_{2}$.
	
	Replace edge $v_{1}v_{w}$ in $U_{5}$ by a new edge $v_{2}v_{w}$ for $n-6$ pendent vertices $v_{w}\in N(v_{1})$ to obtain a new
	graph $U_{6}$ as shown in Figure \ref{fig10}. We get
	$$\begin{aligned}
		&\quad
		\textbf{x}^{\top}\textbf{S}_{\textbf{p}}(U_{6})\textbf{x}-\textbf{x}^{\top}\textbf{S}_{\textbf{p}}(U_{4})\textbf{x}=\\
		& 4\big[f(1,3,p)-f(1,n-3,p)\big]x_{1}x_{w}+4\big[f(2,n-3,p)-f(2,3,p)\big]x_{2}x_{3}\\
		+&2(n-6)\big[f(1,n-3,p)x_{2}x_{w}-f(1,n-3,p)x_{1}x_{w}\big].
	\end{aligned}$$
	
	\noindent  Because $\displaystyle x_{1}\leq x_{2}$, we have
	
	$(n-4)f(1,n-3,p)x_{w}+f(3,n-3,p)x_{2}\leq f(3,n-3,p)x_{1}+2f(2,3,p)x_{3}$,
	
	\noindent it follows that
	
	$(n-4)f(1,n-3,p)x_{w}\leq 2f(2,3,p)x_{3}$, i.e. $\displaystyle x_{w}\leq \frac{2f(2,3,p)}{(n-4)f(1,n-3,p)}x_{3}$.
	
	\noindent Thus for $n\geq10$, we have
	$$\begin{aligned}
		&\big[f(1,3,p)-f(1,n-3,p)\big]x_{1}x_{w}+\big[f(2,n-3,p)-f(2,3,p)\big]x_{2}x_{3}\\
		>&\big\{\big[f(1,3,p)-f(1,n-3,p)\big]\frac{2f(2,3,p)}{(n-4)f(1,n-3,p)}+f(2,n-3,p)-f(2,3,p)\big\}x_{2}x_{3}\\
		=&\frac{1}{(n-4)f(1,n-3,p)}\big\{2f(2,3,p)\big[f(1,3,p)-f(1,n-3,p)\big]\\
		&+(n-4)f(1,n-3,p)\big[f(2,n-3,p)-f(2,3,p)\big]\big\}x_{2}x_{3}\\
		=&\frac{1}{(n-4)f(1,n-3,p)}\big\{2f(2,3,p)f(1,3,p)+f(1,n-3,p)\big[(n-4)f(2,n-3,p)\\
		-&2f(2,3,p)-(n-4)f(2,3,p)\big]\big\}x_{2}x_{3}\\
		>&\frac{2f(2,3,p)f(1,3,p)+f(1,n-3,p)\big[n^{2}-(7+\sqrt{13})n+2\sqrt{13}+12\big]}{(n-4)f(1,n-3,p)}x_{2}x_{3} \ (\textrm{by} \ \textrm{Lemma} \ \ref{lemma 2.1})\\
		>&0.
	\end{aligned}$$
	Thus we get
	$\textbf{x}^{\top}\textbf{S}_{\textbf{p}}(U_{6})\textbf{x}-\textbf{x}^{\top}\textbf{S}_{\textbf{p}}(U_{5})\textbf{x}>0$.
	By Theorem \ref{theorem 2.3}, we have $\displaystyle \rho(\textbf{S}_{\textbf{p}}(U_{5}))<\rho(\textbf{S}_{\textbf{p}}(U_{6}))$.
	According to Theorem \ref{theorem 2.8}, we obtain $\displaystyle \rho(\textbf{S}_{\textbf{p}}(U_{6}))<
	\rho(\textbf{S}_{\textbf{p}}(U_{4}))$. Thus we get $\displaystyle \rho(\textbf{S}_{\textbf{p}}(U_{5}))<
	\rho(\textbf{S}_{\textbf{p}}(U_{4}))$ for $n\geq10$.
	
	$\textbf{Subcase 2.}$ $7\leq n\leq9$.
	
	With the aid of MATLAB, we obtain $\displaystyle
	\rho(\textbf{S}_{\textbf{p=2}}(U_{5}))<\lim_{p \to \infty}\rho(\textbf{S}_{\textbf{p}}(U_{4}))$ for $7\leq n\leq9$. Then by Lemma \ref{lemma 2.1}, we obtain $\displaystyle \rho(\textbf{S}_{\textbf{p}}(U_{5}))<\rho(\textbf{S}_{\textbf{p}}(U_{4}))$ for $7\leq n\leq9$.
	
	This completes the proof.
\end{proof}

\begin{lemma}\label{lemma 4.2}
	$U_{3}$ and $U_{4}$ are the unicyclic graphs of order $n\geq7$ as shown in Figure \ref{fig102}.
	Then we have $\displaystyle
	\rho(\textbf{S}_{\textbf{p}}(U_{4}))<\rho(\textbf{S}_{\textbf{p}}(U_{3}))$.
\end{lemma}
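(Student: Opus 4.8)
The plan is to compare $U_3$ and $U_4$ by the same Rayleigh-quotient technique used in Lemma~\ref{lemma 4.1}, exhibiting an explicit local edge modification that transforms $U_4$ into $U_3$ (or into an intermediate graph closer to $U_3$) and estimating the resulting change in $\textbf{x}^{\top}\textbf{S}_{\textbf{p}}(\cdot)\textbf{x}$, where $\textbf{x}$ is the principal eigenvector of $U_4$. First I would fix the vertex labelling of $U_4$ from Figure~\ref{fig102}/Figure~\ref{fig10} and write out the eigenvalue equations $\rho(\textbf{S}_{\textbf{p}}(U_4))x_i=\sum_j f(d_i,d_j,p)x_j$ at the relevant high-degree and cycle vertices, exactly as was done for $U_5$. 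From these relations I would derive the qualitative ordering of the eigenvector entries (which entry dominates, where the pendent-vertex entries sit relative to the cycle-vertex entries), since this ordering is what controls the sign of the cross terms in the Rayleigh-quotient difference.

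Next I would identify the precise edge move: $U_3$ and $U_4$ differ by relocating an edge so that one of the two near-maximal-degree vertices absorbs pendents (or a cycle edge is shifted), and I would write the difference $\textbf{x}^{\top}\textbf{S}_{\textbf{p}}(U_3)\textbf{x}-\textbf{x}^{\top}\textbf{S}_{\textbf{p}}(U_4)\textbf{x}$ as a sum of terms of the form $2[f(a,b,p)-f(c,d,p)]x_ix_j$ plus terms mixing the relocated weight. I would then group these into a positive part (coming from the increased degrees of the gaining vertex, using strict monotonicity of $f$ in each variable) and a possibly-negative part (coming from the lost cycle weight $f(2,2,p)$ or the demoted vertex), and bound the negative part using the eigenvector-ordering inequalities obtained above, precisely the strategy of Subcase~1.2 where $x_w$ was bounded via the eigenvalue equation.

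The main obstacle I anticipate is the same one that forced a case split in Lemma~\ref{lemma 4.1}: whether one can conclude directly from a single Rayleigh-quotient comparison that $\rho(\textbf{S}_{\textbf{p}}(U_4))<\rho(\textbf{S}_{\textbf{p}}(U_3))$, or whether one must first move to an auxiliary graph (an analogue of $U_6$) and then invoke the Kelmans-type Theorem~\ref{theorem 2.8} or Theorem~\ref{theorem 2.10} to finish. Because $U_3$ and $U_4$ are close in structure, the weight differences $f(a,n-3,p)-f(c,n-3,p)$ for the large degree $n-3$ will dominate the bounded $O(1)$ cycle terms once $n$ is large, so I expect the inequality to hold cleanly for $n$ beyond some small threshold. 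For the remaining small orders (likely $7\le n\le 9$, matching the split in the previous lemma), I would fall back on the monotonicity Lemma~\ref{lemma 2.1}, checking $\rho(\textbf{S}_{\textbf{p=2}}(U_4))<\lim_{p\to\infty}\rho(\textbf{S}_{\textbf{p}}(U_3))$ numerically via the quotient-matrix reduction of Theorem~\ref{theorem 2.4} and then extending to all $p\ge2$ by the fact that $f$ is decreasing in $p$. The delicate point throughout is ensuring every $f$-difference is assigned the correct sign before the eigenvector bounds are applied, since an incorrectly grouped term would break the estimate.
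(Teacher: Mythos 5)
Your plan takes a genuinely different route from the paper, and as written it has a gap at its decisive step. The paper never compares $U_{3}$ and $U_{4}$ by a Rayleigh quotient or an edge move at all. Instead it proves two one-sided bounds and plays them against each other: from the characteristic polynomial of the quotient matrix of the equitable partition $\{\{v_{1},v_{3}\},\{v_{2}\},\{v_{4}\},\{v_{5},\dots,v_{n}\}\}$ of $\textbf{S}_{\textbf{p}}(U_{3})$ (Theorem \ref{theorem 2.4}) it extracts the lower bound $\rho(\textbf{S}_{\textbf{p}}(U_{3}))^{2}>2f^{2}(2,n-2,p)+(n-4)f^{2}(1,n-2,p)$, while from the entrywise domination $\textbf{S}_{\textbf{p}}(U_{4})\le f(4,n-3,p)A(U_{4})$ (Theorem \ref{theorem 2.1}) combined with the adjacency bound of Theorem \ref{theorem 2.7} it gets $\rho(\textbf{S}_{\textbf{p}}(U_{4}))^{2}<(n-2)f^{2}(4,n-3,p)$; an elementary two-variable monotonicity argument then shows the upper bound lies below the lower bound for $n\ge 17$ (the threshold is forced by the hypothesis of Theorem \ref{theorem 2.7}), and MATLAB together with Lemma \ref{lemma 2.1} covers $7\le n\le 16$. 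Your small-$n$ fallback, namely checking $\rho(\textbf{S}_{\textbf{p}=2}(U_{4}))<\lim_{p\to\infty}\rho(\textbf{S}_{\textbf{p}}(U_{3}))$ and invoking monotonicity in $p$, is exactly the paper's device and is sound.

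The gap is structural: $U_{3}$ and $U_{4}$ do not differ by relocating a single edge, so the template of Lemma \ref{lemma 4.1} does not transfer the way you describe. Here $U_{3}$ is the quadrilateral with $n-4$ pendents at one vertex (girth $4$, maximum degree $n-2$), whereas $U_{4}$ is the triangle $U(n,2,n-5)$ with $n-5$ pendents at a cycle vertex $a$ and two pendents $q_{1},q_{2}$ at the adjacent cycle vertex $b$ (girth $3$, maximum degree $n-3$); since both girth and maximum degree change, any passage from $U_{4}$ to $U_{3}$ deletes two edges and inserts two. Your hedge of stopping at an intermediate graph and finishing with Theorem \ref{theorem 2.8} or \ref{theorem 2.10} fails here: the only one-move intermediate that a Kelmans operation carries to $U_{3}$ is the graph obtained by deleting $ab$ and adding an edge from $a$ to $q_{1}$, and for that move the Rayleigh difference is dominated by $-f(n-3,4,p)x_{a}x_{b}$ (the replacement term $f(n-3,2,p)x_{a}x_{q_{1}}$ is of order $x_{a}x_{b}f(n-3,2,p)f(1,4,p)/\rho$, hence far smaller), so the comparison comes out negative. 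One is therefore forced into the direct two-edge comparison, whose difference contains the genuinely negative terms $-f(4,2,p)x_{b}x_{c}$ and $[f(2,2,p)-f(4,1,p)]x_{b}x_{q_{1}}$, plus $[f(n-2,2,p)-f(n-3,4,p)]x_{a}x_{b}$, which is itself negative for $n\le 8$ at $p=2$. These must be beaten by the pendent gain $(n-5)[f(1,n-2,p)-f(1,n-3,p)]x_{a}x_{p}$; this does dominate for large $n$, but only after sharp upper bounds on $x_{b}$, $x_{c}$ and a good lower bound on $\rho(\textbf{S}_{\textbf{p}}(U_{4}))$ are in place, uniformly in $p\ge2$ (crude estimates already fail near $n\approx 10$). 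That quantitative bookkeeping is the entire content of the lemma and is absent from your proposal; the paper's two-sided-bound argument exists precisely to avoid it.
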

\begin{proof}
	Let the vertices of $U_{3}$ be denoted as Figure \ref{fig23120}.
	\begin{figure}[H]
		\centering
		\includegraphics[width=3cm]{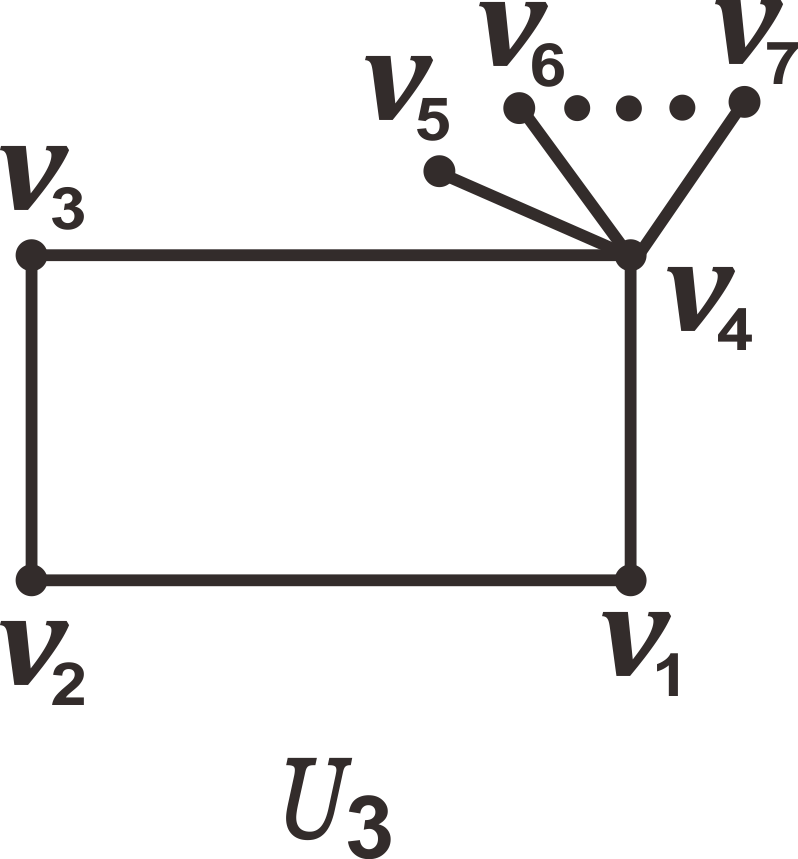}
		\caption{\small  The unicyclic graph $U_{3}$ of order $n$.}
		\label{fig23120}
	\end{figure}
	The quotient matrix of the equitable partition $\{\{v_{1}, v_{3}\},\{v_{2}\},\{v_{4}\},\{v_{5},\ldots,v_{n}\}\}$ of $\textbf{S}_{\textbf{p}}(U_{3})$ is
	\begin{equation*}
		\begin{pmatrix}
			0 & f(2,2) & f(2,n-2) & 0\\
			2f(2,2)& 0 & 0 & 0 \\
			2f(2,n-2)& 0 & 0 & (n-4)f(1,n-2) \\
			0 & 0 & f(1,n-2) & 0\\
		\end{pmatrix}.
	\end{equation*}
	We assume that $\displaystyle P(x, U_{3})$ is the characteristic polynomial of the quotient matrix. By calculation, we obtain
	\begin{align}
		P(x, U_{3})&=x^{4}-\big[2f^{2}(2,2,p)+2f^{2}(2,n-2,p)+(n-4)f^{2}(1,n-2,p)\big]x^{2}\notag\\
		&+2(n-4)f^{2}(2,2,p)f^{2}(1,n-2,p)\notag.
	\end{align}
	\noindent  Since for any $x$, we have
	\begin{eqnarray*}
		P(x, U_{3})&<&x^{4}-\big[2f^{2}(2,2,p)+2f^{2}(2,n-2,p)+(n-4)f^{2}(1,n-2,p)\big]x^{2}\\
		&+&2f^{2}(2,2,p)\big[2f^{2}(2,n-2,p)+(n-4)f^{2}(1,n-2,p)\big],
	\end{eqnarray*}
	\noindent it follows that
	\begin{center}
		$\displaystyle 2f^{2}(2,n-2,p)+(n-4)f^{2}(1,n-2,p)<\rho(\textbf{S}_{\textbf{p}}(U_{3}))^{2}.$
	\end{center}
	
	We consider the following two cases.
	
	$\textbf{Case 1.}$ $n\geq17$.
	
	According to Theorems \ref{theorem 2.1} and \ref{theorem 2.7}, it follows that
	\begin{center}
		$\displaystyle \rho(\textbf{S}_{\textbf{p}}(U_{4}))^{2}<(n-2)f^{2}(4,n-3,p)$.
	\end{center}
	
	\noindent Then we only need to prove
	\begin{center}
		$\displaystyle (n-2)f^{2}(4,n-3,p)<2f^{2}(2,n-2,p)+(n-4)f^{2}(1,n-2,p).$
	\end{center}
	\noindent Let
	\begin{eqnarray*}
		Q(n,p)&=&\displaystyle  2f^{2}(2,n-2,p)+(n-4)f^{2}(1,n-2,p)-(n-2)f^{2}(4,n-3,p)\\
		&=&2\big[2^{p}+(n-2)^{p}\big]^{\frac{2}{p}}+(n-4)\big[1+(n-2)^{p}\big]^{\frac{2}{p}}-(n-2)\big[4^{p}+(n-3)^{p}\big]^{\frac{2}{p}}.
	\end{eqnarray*}
	\noindent  We consider
	\begin{center}
		$\displaystyle h(x,y)=1+(x-2)^{y}-\big[4^{y}+(x-3)^{y}\big]$.
	\end{center}
	\noindent  Since
	\begin{center}
		$\displaystyle h'_{x}(x,y)=y(x-2)^{y-1}-y(x-3)^{y-1}>0$
	\end{center}
	\noindent  for $y\geq2$ and $x\geq17$, we get
	\begin{center}
		$\displaystyle h(x,y)\geq h(17,y)=1+15^{y}-4^{y}-14^{y}$.
	\end{center}
	As
	\begin{eqnarray*}
		h'_{y}(17,y)&=&15^{y}\ln 15-4^{y}\ln 4-14^{y}\ln 14\\
		&\geq&15^{y-2}\big[15^{2}\ln 15-4^{2}\ln 4-14^{2}\ln 14\big]>0
	\end{eqnarray*}
	\noindent for $y\geq 2$, we obtain $\displaystyle h(x,y)\geq h(17,2)>0$. It is easy to show $\displaystyle Q(n,p)>0$.
	
	Consequently, we infer that $\displaystyle
	\rho(\textbf{S}_{\textbf{p}}(U_{4}))<\rho(\textbf{S}_{\textbf{p}}(U_{3}))$ for $n\geq17$.
	
	$\textbf{Case 2.}$ $7\leq n\leq16$.
	
	Similarly, by Lemma \ref{lemma 2.1} and with the aid of MATLAB, we obtain $\displaystyle \rho(\textbf{S}_{\textbf{p}}(U_{4}))<\rho(\textbf{S}_{\textbf{p}}(U_{3}))$. The proof of this result is quite similar to that given for Case 2 of Lemma \ref{lemma 4.1}. It is not difficult but is too long to give here.
	
	The proof of the lemma is now complete.
\end{proof}

\begin{theorem}\label{theorem 3.2}
	Among all unicyclic graphs of order $n\geq7$, $S_{n}+e$, $U_{1}$, $U_{2}$, $U_{3}$ and $U_{4}$ (see in Figure
	\ref{fig102}) are, respectively, the unique unicyclic graphs with the first five maximum $p$-Sombor spectral radii.
\end{theorem}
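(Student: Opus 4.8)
The plan is to assemble the result directly from Theorem~\ref{theorem 2.11} together with the two comparisons established in Lemmas~\ref{lemma 4.1} and~\ref{lemma 4.2}, so that essentially no fresh computation is required. The crucial preliminary observation, already recorded at the beginning of this section, is that for $p\geq2$ the $p$-Sombor matrix $\textbf{S}_{\textbf{p}}(G)$ is a weighted adjacency matrix with property $P^{\ast}$. Hence Theorem~\ref{theorem 2.11}, which is stated for $A_{f}$ with property $P^{\ast}$, applies verbatim with $A_{f}$ replaced by $\textbf{S}_{\textbf{p}}$.

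First I would extract the two pieces of information that Theorem~\ref{theorem 2.11} supplies: (i) every unicyclic graph $G$ of order $n\geq7$ that is \emph{not} isomorphic to one of the six graphs $S_{n}+e,U_{1},U_{2},U_{3},U_{4},U_{5}$ satisfies $\rho(\textbf{S}_{\textbf{p}}(G))<\rho(\textbf{S}_{\textbf{p}}(U_{4}))$; and (ii) the chain of strict inequalities
\begin{center}
$\rho(\textbf{S}_{\textbf{p}}(U_{3}))<\rho(\textbf{S}_{\textbf{p}}(U_{2}))<\rho(\textbf{S}_{\textbf{p}}(U_{1}))<\rho(\textbf{S}_{\textbf{p}}(S_{n}+e))$.
\end{center}
What Theorem~\ref{theorem 2.11} leaves undetermined is the position of $U_{4}$ and $U_{5}$ relative to $U_{3}$; these are exactly what Lemma~\ref{lemma 4.1} ($\rho(\textbf{S}_{\textbf{p}}(U_{5}))<\rho(\textbf{S}_{\textbf{p}}(U_{4}))$) and Lemma~\ref{lemma 4.2} ($\rho(\textbf{S}_{\textbf{p}}(U_{4}))<\rho(\textbf{S}_{\textbf{p}}(U_{3}))$) provide.

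Chaining these inequalities yields the complete ordering
\begin{center}
$\rho(\textbf{S}_{\textbf{p}}(U_{5}))<\rho(\textbf{S}_{\textbf{p}}(U_{4}))<\rho(\textbf{S}_{\textbf{p}}(U_{3}))<\rho(\textbf{S}_{\textbf{p}}(U_{2}))<\rho(\textbf{S}_{\textbf{p}}(U_{1}))<\rho(\textbf{S}_{\textbf{p}}(S_{n}+e))$.
\end{center}
Combining this with item (i), I would conclude that every unicyclic graph other than $S_{n}+e,U_{1},U_{2},U_{3},U_{4}$ --- including $U_{5}$, by Lemma~\ref{lemma 4.1} --- has $p$-Sombor spectral radius strictly smaller than $\rho(\textbf{S}_{\textbf{p}}(U_{4}))$, while these five graphs realize, in the displayed decreasing order, the five largest values. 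Because every comparison is strict, each of the first five maxima is attained by a unique graph, which is precisely the assertion.

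The theorem statement is therefore a short synthesis; the genuine labour resides in the supporting lemmas. The main obstacle is really Lemma~\ref{lemma 4.1} (and, to a lesser extent, Lemma~\ref{lemma 4.2}), whose proofs demand delicate principal-eigenvector comparisons, the monotonicity of $f(x,y,p)$ in $p$ from Lemma~\ref{lemma 2.1}, and a separate numerical verification for the small orders $7\leq n\leq9$. For the theorem proper, the only point requiring care is checking that the list of six graphs in Theorem~\ref{theorem 2.11} is exhaustive and that the strictness of every inequality rules out ties, so that the claimed uniqueness is genuine.
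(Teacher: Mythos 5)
Your proposal is correct and follows essentially the same route as the paper: the paper's proof of this theorem is exactly the one-line synthesis of Theorem~\ref{theorem 2.11} (applicable since $\textbf{S}_{\textbf{p}}$ with $p\geq2$ has property $P^{\ast}$) with Lemmas~\ref{lemma 4.1} and~\ref{lemma 4.2}, yielding the same strict chain of inequalities and hence uniqueness at each of the first five positions.
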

\begin{proof} According to Theorem \ref{theorem 2.11} and Lemmas \ref{lemma 4.1} and \ref{lemma 4.2}, we
	get $S_{n}+e$, $U_{1}$, $U_{2}$, $U_{3}$ and $U_{4}$ as shown in Figure \ref{fig102} are, respectively, the unique unicyclic
	graphs with the first five maximum $p$-Sombor spectral radii.
\end{proof}

\noindent  \textbf{Remark 4.1.}
For $n=5$, we have $S_{5}+e\cong U_{4}$ and $U_{2}\cong U_{5}$.
Under the help of MATLAB and by Theorem \ref{theorem 2.8}, we obtain
\begin{center}
	$\rho(\textbf{S}_{\textbf{p}}(S_{5}+e))>\rho(\textbf{S}_{\textbf{p}}(U_{1}))>\rho(\textbf{S}_{\textbf{p}}(U_{2}))>\rho(\textbf{S}_{\textbf{p}}(U_{3}))>\rho(\textbf{S}_{\textbf{p}}(C_{5}))$.
\end{center}
And for $n=6$, we
get $S_{6}+e$, $U_{1}$ and $U_{2}$ are, respectively, the unique unicyclic
graphs with the first three maximum $p$-Sombor spectral radii. Moreover, we find the unicyclic
graph with the fourthly largest $p$-Sombor spectral radius depends on the choice of parameter $p$.

\section{Extremal bicyclic graphs}

In this section, we will meditate the extremal bicyclic graphs with respect to the spectral radius of weighted adjacency matrices with property $P^{\ast}$ and the $p$-Sombor spectral radius.

The bicyclic graph obtained from two cycles $C_{q}$ and $C_{t}$ ($q\geq t\geq3$) with a common
path $P_{l}$ such that $2\leq l\leq t-l+2$ is denoted by $P(q,l,t)$ as shown in Figure \ref{fig1051} (left). Set $\mathcal{P}=\{P(q,l,t)\mid q\geq t\geq3, 2\leq l\leq t-l+2\}$.

In addition, suppose that $C_{q}$ and $C_{t}$ are two vertex-disjoint cycles ($q\geq t\geq3$), $z_{1}$ is a vertex of $C_{q}$ and $z_{l}$ is a vertex of $C_{t}$. We join $z_{1}$ and $z_{l}$ by a path $z_{1}z_{2}\ldots z_{l}$ of length $l-1$, where $l\geq1$ and $l=1$ means identifying $z_{1}$ with $z_{l}$. The resulting graph is denoted by $B(q,l,t)$ as shown in Figure \ref{fig1051} (right). Set $\mathcal{B}=\{B(q,l,t)\mid q\geq t\geq3, l\geq1\}$.
\begin{figure}[H]
	\centering
	\includegraphics[width=10cm]{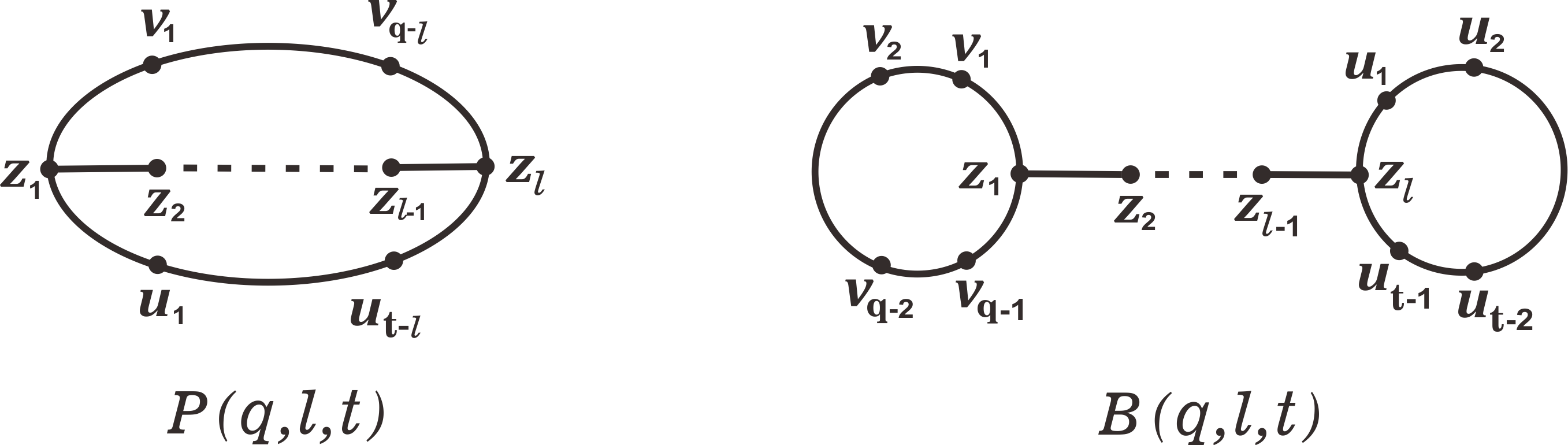}
	\caption{\small  The bicyclic graphs $P(q,l,t)$ and $B(q,l,t)$.}
	\label{fig1051}
\end{figure}

For a bicyclic graph $G$, the base of $G$, denoted by $\hat{G}$, is the unique minimal bicyclic subgraph of $G$.
Suppose that $\mathcal{B}(n)$ is the set of all bicyclic graphs of order $n$. We can define the following two classes of bicyclic graphs with order $n$:
\begin{center}
	$\mathcal{B}_{1}(n)=\{G\in\mathcal{B}(n)\mid \hat{G}\in\mathcal{P}\}$ and $\mathcal{B}_{2}(n)=\{G\in\mathcal{B}(n)\mid \hat{G}\in\mathcal{B}\}$.
\end{center}
\noindent  It is easy to see that $\mathcal{B}(n)=\mathcal{B}_{1}(n)\bigcup\mathcal{B}_{2}(n)$ and $\mathcal{B}_{1}(n)\cap\mathcal{B}_{2}(n)=\emptyset$. In addition, the bicyclic graphs $B_{1}$, $B_{2}$, $B_{3}$ and $B_{4}$ that will be considered in this section are shown in Figure \ref{fig3}. And it is obviously that $B_{1}$, $B_{3}$, $B_{4}\in\mathcal{B}_{1}(n)$ and $B_{2}\in\mathcal{B}_{2}(n)$.

\begin{figure}[H]
	\centering
	\includegraphics[width=8cm]{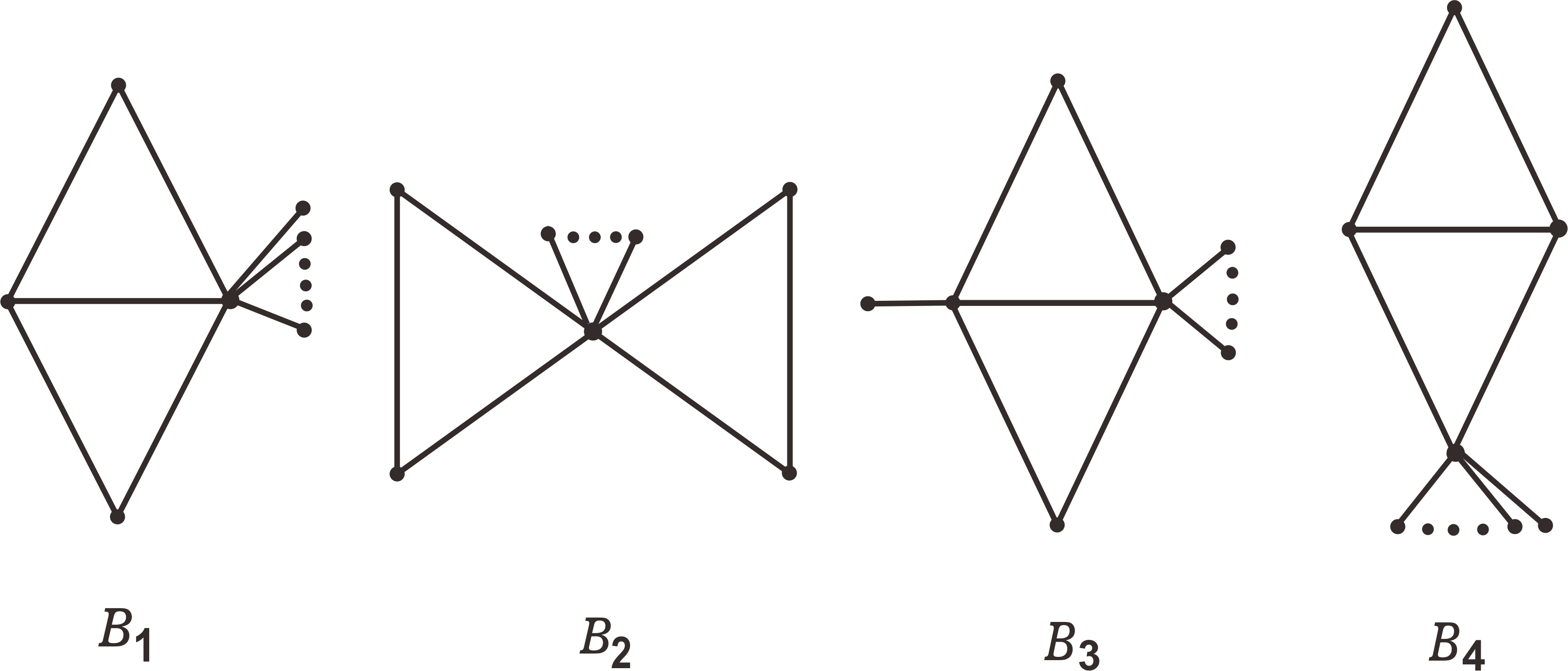}
	\caption{\small  The bicyclic graphs $B_{1}$, $B_{2}$, $B_{3}$ and $B_{4}$ of order $n\geq7$.}
	\label{fig3}
\end{figure}

In the following, we firstly consider the unique bicyclic graph with the largest spectral radius of weighted adjacency matrices with property $P^{\ast}$

\begin{theorem}\label{theorem 5.1} If $G$ is a bicyclic graph of order $n\geq6$ and is not isomorphic to the bicyclic
	graphs $B_{1}$, $B_{2}$, $B_{3}$ and $B_{4}$ as shown in Figure \ref{fig3}, then we have $\displaystyle
	\rho(A_{f}(G))<\rho(A_{f}(B_{3}))$.
\end{theorem}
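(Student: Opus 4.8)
The plan is to reduce the problem to a sequence of structure-simplifying operations, each of which strictly increases the $p$-Sombor spectral radius, thereby driving an arbitrary bicyclic graph $G\not\cong B_{1},B_{2},B_{3},B_{4}$ toward one of the four extremal graphs and establishing that $B_{3}$ dominates. Since every bicyclic graph $G$ has $\mathcal{B}(n)=\mathcal{B}_{1}(n)\cup\mathcal{B}_{2}(n)$, I would split the argument according to whether the base $\hat{G}$ lies in $\mathcal{P}$ (two cycles sharing a path) or in $\mathcal{B}$ (two disjoint cycles joined by a path). In each case the base has at most a bounded number of "branch" vertices, and all trees hanging off the base can be collapsed.

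First I would handle the pendant trees. By Theorem~\ref{theorem 2.9}, if $G$ contains, attached at a base vertex $u$, a tree $T$ of order $m+1\ (m\ge2)$ that is not a star centered at $u$, then moving all the pendant material to $u$ strictly increases $\rho(A_{f})$; iterating, I may assume every attached tree is a star of pendant edges at a single base vertex. Then, using Theorem~\ref{theorem 2.10} (the one-pendant-edge shift between two adjacent vertices), I would concentrate all pendant edges at a single high-degree vertex of the base, since shifting a pendant from the lower-degree to the higher-degree endpoint of an edge raises the spectral radius. This leaves only finitely many "skeleton" bicyclic graphs to compare: those whose base is a small fixed configuration with all $n-(\text{base size})$ pendants clustered at one vertex.

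Next I would reduce the base itself. For the $\mathcal{B}_{1}$ family $P(q,l,t)$ and the $\mathcal{B}_{2}$ family $B(q,l,t)$, a long shared path or a long connecting path can be shortened by Kelmans operations (Theorem~\ref{theorem 2.8}): any two vertices along such a path admit a Kelmans move that is not an isomorphism, so $G\ncong G'$ forces a strict increase. This collapses $l$, $q$, and $t$ down to their minimal values, reducing the base to the handful of minimal bicyclic configurations realized by $B_{1},B_{2},B_{3},B_{4}$ (the two triangles sharing an edge, the theta-type graph, and so on). Throughout I would invoke the hypothesis $n\ge6$ to guarantee that the pendant cluster is nonempty and that the relevant vertex degrees exceed $1$, so that $f(x,y,p)$ is being evaluated in the region $x,y\ge1$ where property $P^{\ast}$ and Lemma~\ref{lemma 2.1} apply.

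The main obstacle is the final head-to-head comparison among the few surviving skeleton graphs, in particular confirming that $B_{3}$ beats $B_{1}$, $B_{2}$, and $B_{4}$ and not merely that $G$ is dominated by \emph{some} extremal graph. Here the Kelmans and edge-shift theorems no longer apply directly (the graphs are pairwise non-isomorphic minima of their families), so I would fall back on the Rayleigh-quotient technique of Theorem~\ref{theorem 2.3}: taking the principal eigenvector $\textbf{x}$ of the smaller graph and evaluating $\textbf{x}^{\top}\textbf{S}_{\textbf{p}}(B_{3})\textbf{x}-\textbf{x}^{\top}\textbf{S}_{\textbf{p}}(G)\textbf{x}$, exactly as in Lemma~\ref{lemma 4.1}, reducing the sign to inequalities among the weights $f(d_{i},d_{j},p)$. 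Because these weight comparisons can go the wrong way for small degrees, I expect to need the monotonicity in $p$ from Lemma~\ref{lemma 2.1} (to pass to a worst-case or limiting $p$) together with a quotient-matrix computation as in Lemma~\ref{lemma 4.2}, and possibly a finite MATLAB-assisted check for the smallest orders $n$ where the asymptotic degree estimates are not yet decisive.
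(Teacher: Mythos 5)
Your overall toolkit (split by base type, collapse attached trees with Theorem~\ref{theorem 2.9}, shift pendants with Theorem~\ref{theorem 2.10}, shorten cycles and paths with Kelmans operations via Theorem~\ref{theorem 2.8}) is the same machinery the paper uses, but the way you deploy it contains a fatal gap. Your reduction ends by concentrating \emph{all} pendant edges at a single high-degree vertex of the reduced base; the graphs you land on are then precisely $B_{1}$ (all pendants at a shared vertex of two triangles with a common edge), $B_{4}$ (all pendants at a non-shared vertex), or $B_{2}$ (all pendants at the cut vertex of two triangles sharing a vertex). Since every operation you apply strictly \emph{increases} the spectral radius, this chain only yields $\rho(A_{f}(G))<\rho(A_{f}(B_{1}))$ (or $B_{2}$, $B_{4}$), which does not give the claim: a single Kelmans operation on the two shared vertices of $B_{3}$ produces $B_{1}$, so by Theorem~\ref{theorem 2.8} one has $\rho(A_{f}(B_{3}))<\rho(A_{f}(B_{1}))$, and Lemma~\ref{lemma 5.2} gives $\rho(\textbf{S}_{\textbf{p}}(B_{3}))<\rho(\textbf{S}_{\textbf{p}}(B_{2}))$. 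Consequently your proposed repair --- a head-to-head verification that ``$B_{3}$ beats $B_{1}$, $B_{2}$, and $B_{4}$'' --- asks you to prove statements that are false. This reflects a misreading of the theorem: $B_{1}$, $B_{2}$, $B_{4}$ are excluded from its hypothesis precisely because they are \emph{not} dominated by $B_{3}$; the claim of domination is only for the remaining bicyclic graphs.

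The idea your proposal is missing, which is the actual content of the paper's proof, is that the increasing chain of operations must be \emph{steered so that it terminates exactly at $B_{3}$} (which has $n-5$ pendants at one shared base vertex and one pendant at the other), never ending at $B_{1}$, $B_{2}$ or $B_{4}$. Concretely, the paper arranges the Kelmans operations so that \emph{two} base vertices retain attached trees rather than piling everything onto one; it uses the hypotheses $G\ncong B_{1}$, $G\ncong B_{4}$, $G\ncong B_{2}$ to guarantee that after the operations the second attached tree is nontrivial (e.g.\ $v(T''_{2})\geq 2$ in its Subcase 1.1.1), so the resulting graph is not one of the excluded extremal graphs; and when there is a choice of direction for a Kelmans operation it picks the direction by a degree-sum comparison ($d_{z_{1}}+d_{v_{1}}\leq d_{z_{2}}+d_{v_{q-2}}$) precisely to avoid producing $B_{1}$. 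This steering also keeps the entire argument inside Theorems~\ref{theorem 2.8}, \ref{theorem 2.9} and \ref{theorem 2.10}, which hold for every weighted adjacency matrix with property $P^{\ast}$; your fallback to Lemma~\ref{lemma 2.1}, quotient-matrix computations and MATLAB checks is specific to the $p$-Sombor matrix with $p\geq2$ (and to finitely many small $n$), so even where your comparisons point in the right direction they could not establish the theorem as stated for general $A_{f}$.
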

\begin{proof}
	We consider the following two cases.
	
	$\textbf{Case 1.}$ $G \in \mathcal{B}_{1}(n)$.
	
	$\textbf{Subcase 1.1.}$ $\hat{G}\in\mathcal{P}$ such that $l=2, q=t=3$.
	
	$B_{5}$ is the bicyclic graph of order $n\geq6$ as shown in Figure \ref{fig1054}, where $T_{i}$ is a tree and has a unique common vertex $v_{i}$ with $P(3,2,3)$ in $B_{5}$ for $1\leq i\leq4$.  Without loss of generality, let $v(T_{i})$ be the order of $T_{i}$ such that $v(T_{1})\geq v(T_{2})\geq1$ and $v(T_{3})\geq v(T_{4})\geq1$. If $\hat{G}\in\mathcal{P}$ such that $l=2, q=t=3$, then $G$ has the same structure as $B_{5}$.
	\begin{figure}[H]
		\centering
		\includegraphics[width=3cm]{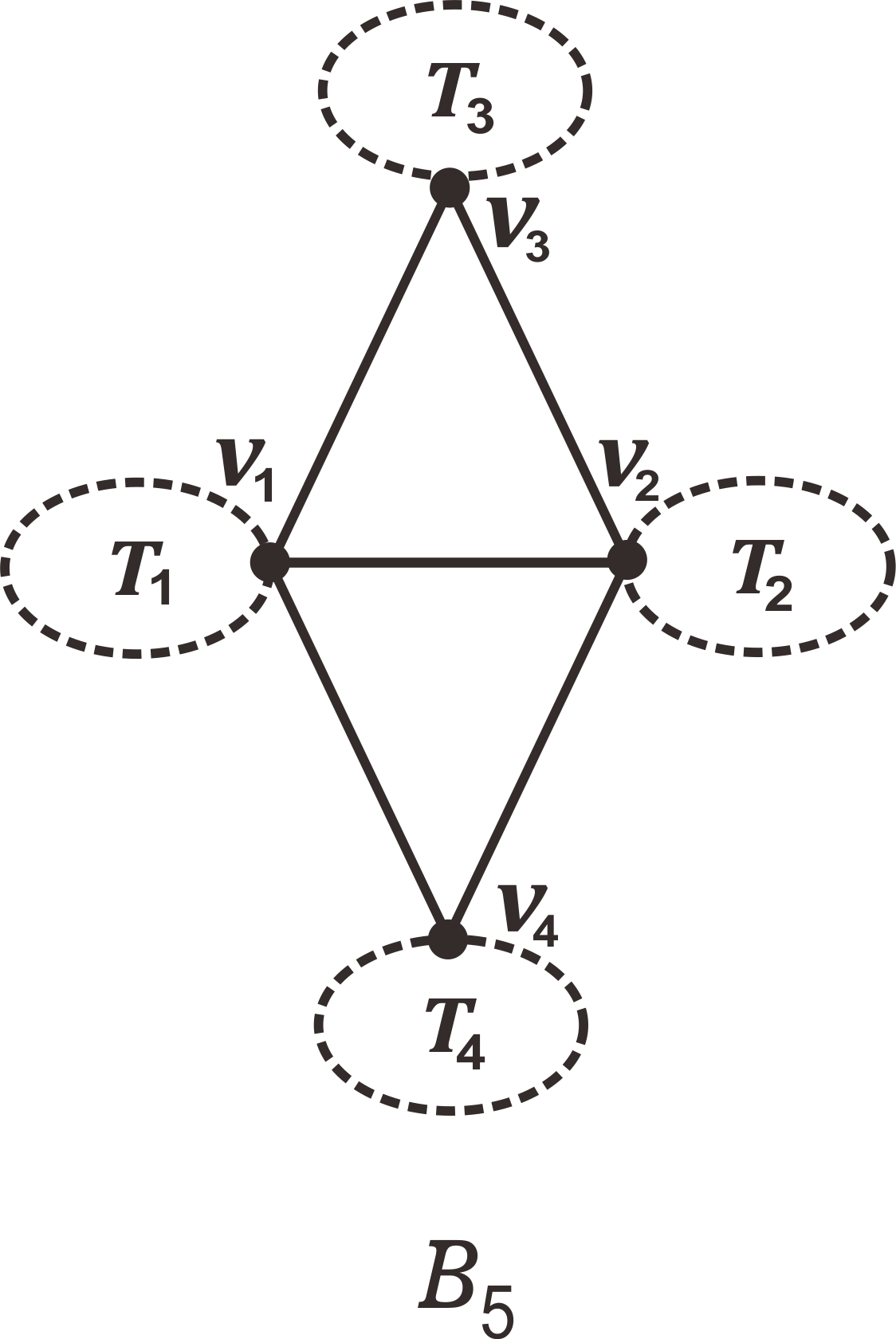}
		\caption{\small  The bicyclic graph $B_{5}$.}
		\label{fig1054}
	\end{figure}
	
	$\textbf{Subcase 1.1.1.}$ $v(T_{1})\geq2$.
	
	If there are non-pendent vertices $w_{1},\ldots,w_{s}$ ($1\leq s\leq\lfloor\frac{n-4}{2}\rfloor$) adjacent to $v_{1}$ in $T_{1}$, then we use Kelmans operations on the vertices $v_{2}$ and $w_{1},\ldots,w_{s}$, $v_{2}$ and $v_{3}$,  $v_{2}$ and $v_{4}$, respectively. Otherwise, we only use Kelmans operations on the vertices $v_{2}$ and $v_{3}$,  $v_{2}$ and $v_{4}$. We denote the obtained graph by $G'$ (see Figure \ref{fig181}). Because $G\ncong B_{1}$, we have $v(T''_{2})\geq2$.
	
	If $G'\cong B_{3}$, then by Theorem \ref{theorem 2.8}, we get $\displaystyle
	\rho(A_{f}(G))<\rho(A_{f}(B_{3}))$.
	
	Otherwise, by Theorems \ref{theorem 2.8}, \ref{theorem 2.9} and \ref{theorem 2.10}, we obtain $\displaystyle
	\rho(A_{f}(G))<\rho(A_{f}(B_{3}))$.
	
	\begin{figure}[H]
		\centering
		\includegraphics[width=7cm]{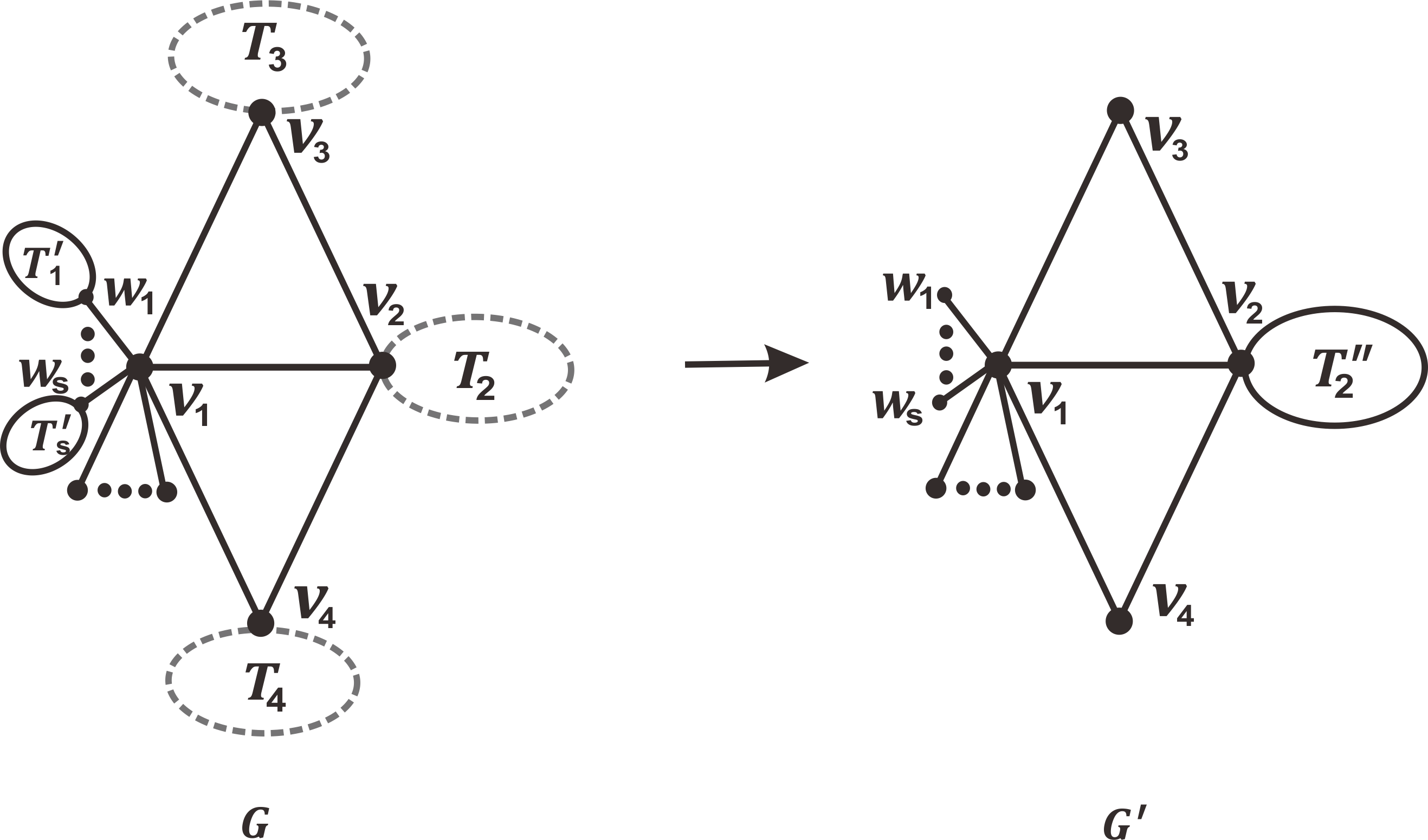}
		\caption{\small  The Kelmans operations for Subcase 1.1.1.}
		\label{fig181}
	\end{figure}
	
	$\textbf{Subcase 1.1.2.}$ $v(T_{1})=1$.
	
	As $v(T_{1})\geq v(T_{2})$, we have $v(T_{1})=v(T_{2})=1$. If there are non-pendent vertices $w_{1},\ldots,w_{s}$ ($1\leq s\leq\lfloor\frac{n-4}{2}\rfloor$) adjacent to $v_{3}$ in $T_{3}$, then we use Kelmans operations on the vertices $v_{1}$ and $w_{1},\ldots,w_{s}$, $v_{1}$ and $v_{4}$,  $v_{2}$ and $v_{3}$, respectively. Otherwise, we only use Kelmans operations on the vertices $v_{1}$ and $v_{4}$,  $v_{2}$ and $v_{3}$. We denote the obtained graph by $G'$ (see Figure \ref{fig182}). Since $G\ncong B_{4}$, then $v(T''_{1})\geq2$.
	
	If $G'\cong B_{3}$, then by Theorem \ref{theorem 2.8}, we have $\displaystyle
	\rho(A_{f}(G))<\rho(A_{f}(B_{3}))$. Otherwise, according to Theorems \ref{theorem 2.8}, \ref{theorem 2.9} and \ref{theorem 2.10}, we obtain $\displaystyle
	\rho(A_{f}(G))<\rho(A_{f}(B_{3}))$.
	
	\begin{figure}[H]
		\centering
		\includegraphics[width=7cm]{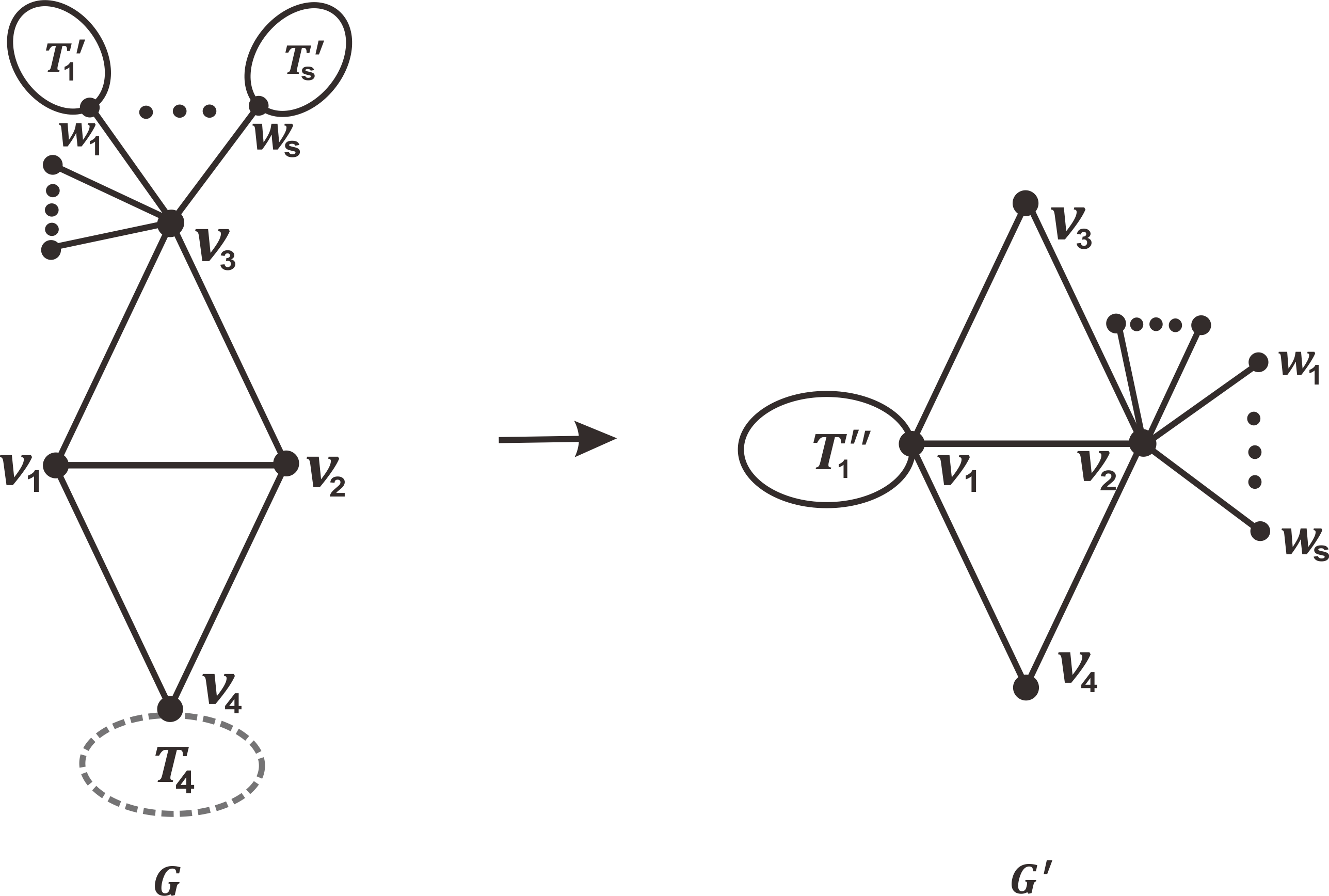}
		\caption{\small  The Kelmans operations for Subcase 1.1.2.}
		\label{fig182}
	\end{figure}
	
	$\textbf{Subcase 1.2.}$  $\hat{G}\in\mathcal{P}$ such that $l=2, q>3$.
	
	Let $d_{z_{1}}+d_{v_{1}}\leq d_{z_{2}}+d_{v_{q-2}}$. We firstly use Kelmans operations on the vertices $v_{1}$ and $z_{1}$, then $z_{2}$ and $v_{q-2}$, $\ldots$, $v_{2}$, $u_{t-2}$, $\ldots$, $u_{1}$ in order to obtain a resulting graph $G'$. Because $d_{z_{1}}+d_{v_{1}}\leq d_{z_{2}}+d_{v_{q-2}}$, then $G'\ncong B_{1}$.
	
	If $G'\cong B_{3}$, according to Theorem \ref{theorem 2.8}, we have $\displaystyle
	\rho(A_{f}(G))<\rho(A_{f}(B_{3}))$.
	Otherwise, by Theorems \ref{theorem 2.8}, \ref{theorem 2.9} and \ref{theorem 2.10}, we obtain $\displaystyle
	\rho(A_{f}(G))<\rho(A_{f}(B_{3}))$.

	$\textbf{Subcase 1.3.}$  $\hat{G}\in\mathcal{P}$ such that $l\geq3$.
	
	We firstly use Kelmans operations on the vertices $z_{l}$ and $v_{q-l}$, $\ldots$, $v_{2}$, $u_{t-l}$, $\ldots$, $u_{2}$ (the vertices $v_{q-l}$, $\ldots$, $v_{2}$, $u_{t-l}$, $\ldots$, $u_{2}$ are not necessarily all exist, we use Kelmans operations on the existing vertices) in order. Then we use Kelmans operations on the vertices $z_{3}$ and $z_{3}$, $\ldots$, $z_{l}$ in order.
	Suppose the resulting graph is $G'$. Then $G'$ has the same structure as $B_{6}$ (see Figure \ref{fig12117} (left)). Afterward, we use Kelmans operations on the vertices $u_{1}$ and $z_{2}$, then $z_{2}$ and $v_{1}$ to obtain a resulting graph that has the same structure as $B_{7}$ (see Figure \ref{fig12117} (center)). Without loss of generality, we assume that $v(T_{2})\geq v(T_{3})\geq 1$. Next, we use Kelmans operation on the vertices $z_{2}$ and $z_{3}$ to obtain a new graph $G''$ that has the same structure as $B_{8}$ (see Figure \ref{fig12117} (right)). Clearly, $G''$ is not isomorphic to $B_{1}$ and $B_{4}$.
	
	If $G''\cong B_{3}$, then by Theorem \ref{theorem 2.8}, we have $\displaystyle
	\rho(A_{f}(G))<\rho(A_{f}(B_{3}))$. Otherwise, by Subcase 1.1, we get $\displaystyle
	\rho(A_{f}(G))<\rho(A_{f}(B_{3}))$.
	
	\begin{figure}[H]
		\centering
		\includegraphics[width=12.5cm]{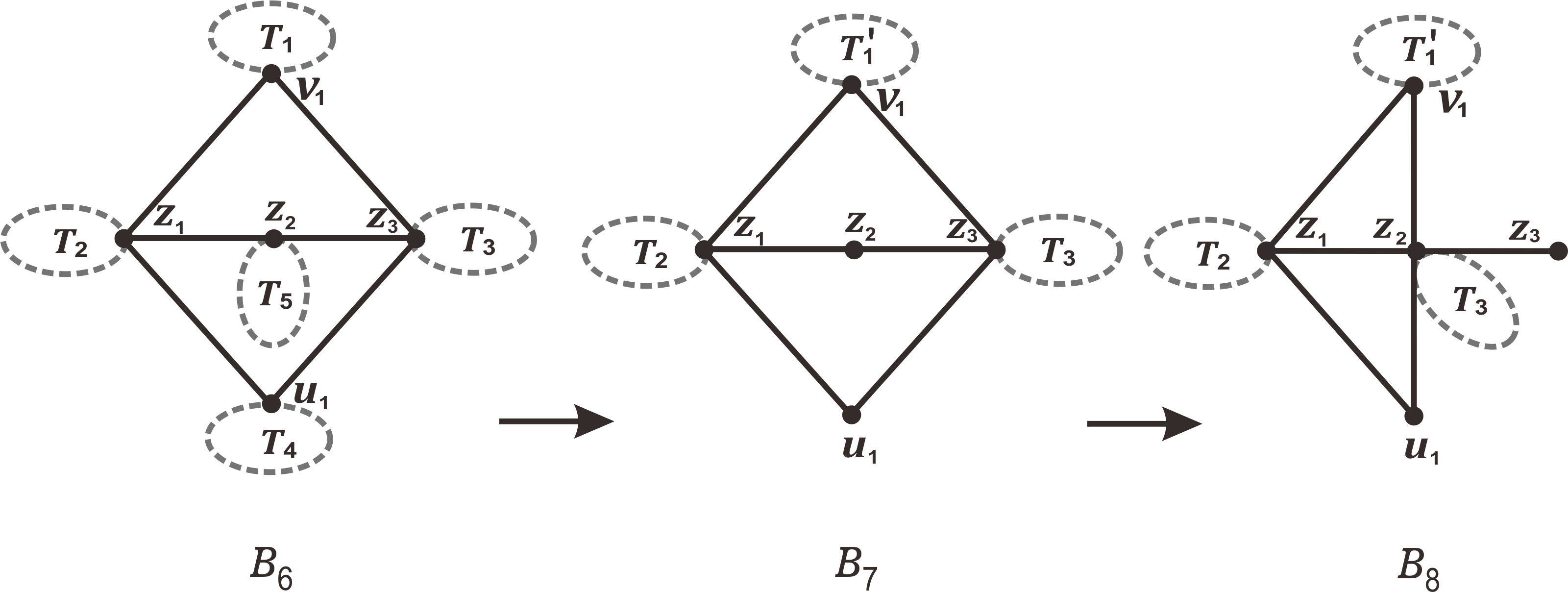}
		\caption{\small  The bicyclic graphs $B_{6}$, $B_{7}$ and $B_{8}$.}
		\label{fig12117}
	\end{figure}
	
	$\textbf{Case 2.}$ $G \in \mathcal{B}_{2}(n)$.
	
	$\textbf{Subcase 2.1.}$  $\hat{G}\in\mathcal{B}$ such that $l=1$.
	
	Firstly, we use Kelmans operations on the vertices $v_{1}$ and $v_{2}$, $\ldots$, $v_{q-1}$, $u_{1}$ and $\ldots$, $u_{t-1}$ in order.
	Suppose the resulting graph is $G'$. We obtain $G'$ has the same structure as $B_{9}$ (see Figure \ref{fig12115} (left)).
	Then we use Kelmans operation on the vertices $v_{1}$ and $u_{t-1}$ to obtain a new graph $G''$ as shown in Figure \ref{fig12115}. It is clear that $G''\ncong B_{4}$. Because $G\ncong B_{2}$, we have $G''\ncong B_{1}$.
	
	If $G''\cong B_{3}$, then by Theorem \ref{theorem 2.8}, we have $\displaystyle
	\rho(A_{f}(G))<\rho(A_{f}(B_{3}))$. Otherwise, according to Subcase 1.1, we get $\displaystyle
	\rho(A_{f}(G))<\rho(A_{f}(B_{3}))$.
	\begin{figure}[H]
		\centering
		\includegraphics[width=10.5cm]{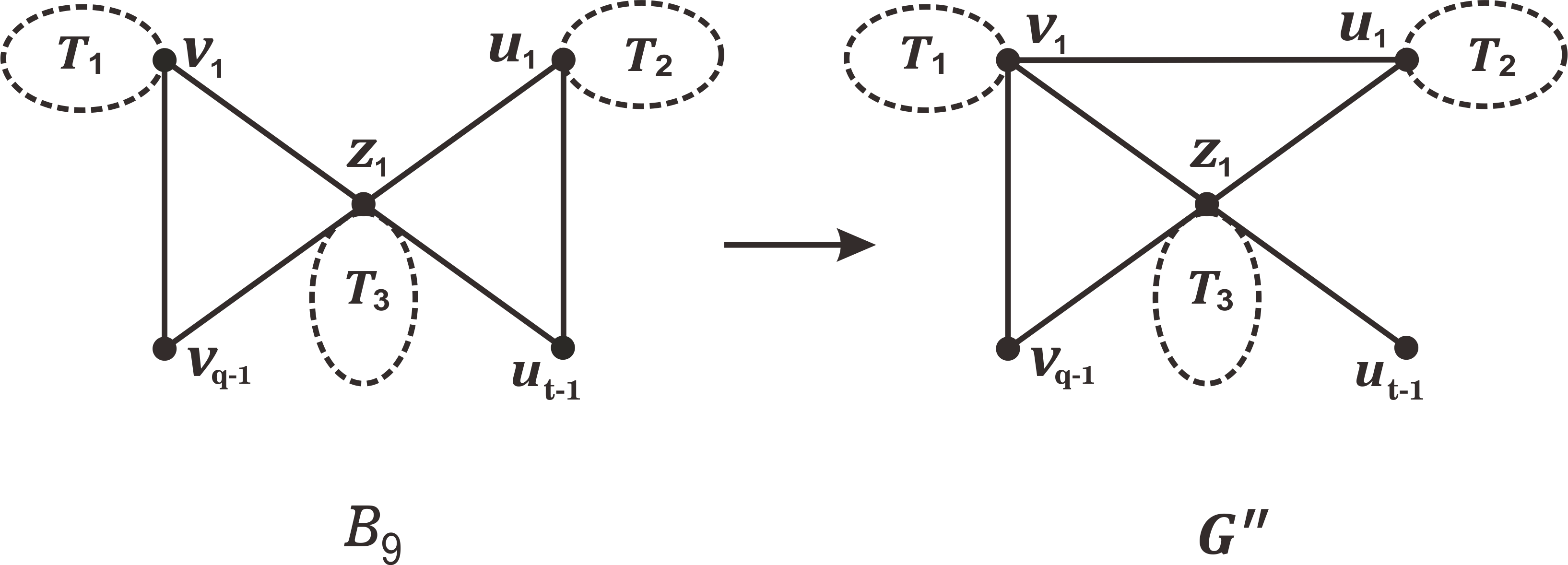}
		\caption{\small  The bicyclic graph $B_{9}$ and the Kelmans operation for Subcase 2.1.}
		\label{fig12115}
	\end{figure}

	$\textbf{Subcase 2.2.}$  $\hat{G}\in\mathcal{B}$ such that $l\geq2$.
	
	Firstly, we use Kelmans operations on the vertices $v_{1}$ and $v_{2}$, $v_{3}$, $\ldots$, $v_{q-1}$, the vertices $u_{1}$ and $u_{2}$, $u_{3}$, $\ldots$, $u_{t-1}$, the vertices $z_{2}$ and $z_{2}$, $z_{3}$, $\ldots$, $z_{l-1}$, $z_{l}$ of $G$ in order. Suppose the resulting graph is $G'$. Then we use Kelmans operation on the vertices $v_{1}$ and $z_{2}$ to obtain a new graph $G''$ as shown in Figure \ref{fig12113}. We have $G''\ncong B_{2}$. Analogously, we get $\displaystyle
	\rho(A_{f}(G))<\rho(A_{f}(B_{3}))$.
	
	This proof is completed.
\end{proof}
\begin{figure}[H]
	\centering
	\includegraphics[width=11cm]{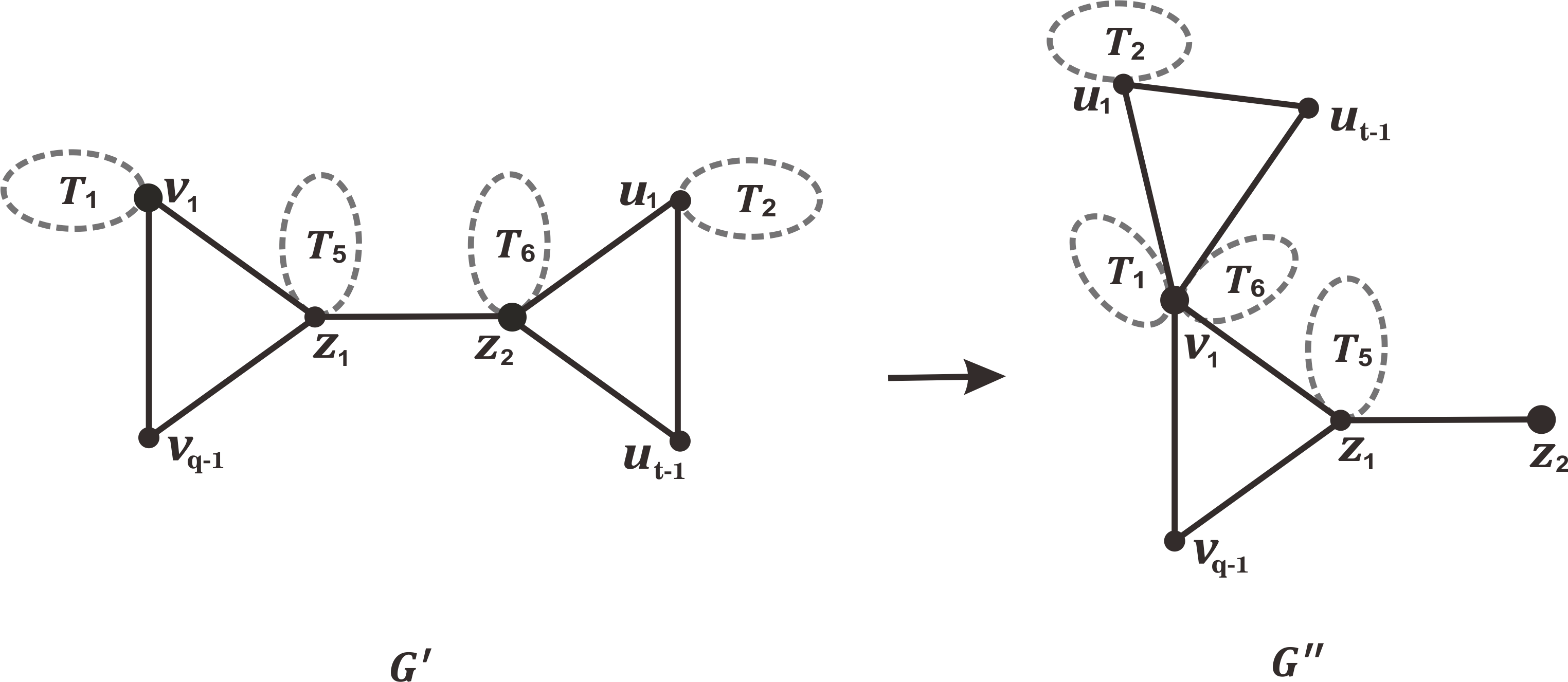}
	\caption{\small  The Kelmans operation for Subcase 2.2.}
	\label{fig12113}
\end{figure}

\begin{theorem}\label{theorem 5.2} Among all bicyclic graphs of order $n\geq6$, $B_{1}$ is the unique bicyclic graph with the largest spectral radius of weighted adjacency matrices with property $P^{\ast}$.
\end{theorem}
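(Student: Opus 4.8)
The plan is to read off the result from Theorem~\ref{theorem 5.1} together with three short pairwise comparisons, all carried out with the Kelmans operation of Theorem~\ref{theorem 2.8}. By Theorem~\ref{theorem 5.1}, every bicyclic graph of order $n\ge 6$ that is not one of $B_1,B_2,B_3,B_4$ (Figure~\ref{fig3}) satisfies $\rho(A_f(\cdot))<\rho(A_f(B_3))$, so any graph attaining the maximum of $\rho(A_f(\cdot))$ must already lie in $\{B_1,B_2,B_3,B_4\}$. Hence it suffices to prove the three strict inequalities $\rho(A_f(B_2))<\rho(A_f(B_1))$, $\rho(A_f(B_3))<\rho(A_f(B_1))$ and $\rho(A_f(B_4))<\rho(A_f(B_1))$, after which $B_1$ is the unique maximizer.

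For the two graphs $B_3,B_4\in\mathcal B_1(n)$, which share the diamond base $P(3,2,3)$ with $B_1$ and differ from it only in where the $n-4$ pendant vertices are attached, I would relocate all pendants onto the distinguished degree-$(n-1)$ vertex $w$ of $B_1$ (a vertex of the shared edge). Taking the ordered pair $(u,w)$ with $u$ a vertex presently carrying pendants, one checks that the private neighbourhood $N(u)-N[w]$ consists exactly of those pendants, so a single Kelmans operation (repeated once more if $B_4$ spreads its pendants over both tips of the diamond) transports them onto $w$ and yields $B_1$. As $B_3\not\cong B_1$ and $B_4\not\cong B_1$, Theorem~\ref{theorem 2.8} delivers the two inequalities immediately; alternatively Theorem~\ref{theorem 2.10} applies verbatim, since the diamond around the shared edge is exactly the local configuration $F$.

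The delicate comparison is $B_2$ against $B_1$, because $B_2\in\mathcal B_2(n)$ has the bowtie base $B(3,1,3)$ (two triangles meeting in a single centre $c$ of degree $n-1$), while $B_1\in\mathcal B_1(n)$ has the diamond base; merely moving pendants cannot change one base into the other. The key observation is that a suitable Kelmans operation does change it: writing the two triangles as $\{c,a_1,a_2\}$ and $\{c,b_1,b_2\}$ and applying the operation to the ordered pair $(a_2,b_1)$, one finds $N(a_2)-N[b_1]=\{a_1\}$, so the edge $a_1a_2$ is deleted and $a_1b_1$ inserted. The two triangles now share the edge $cb_1$, the vertex $a_2$ becomes a pendant at $c$, and the $n-5$ original pendants at $c$ are untouched; the result is precisely $B_1$ with its $n-4$ pendants at a shared-edge vertex. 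Since $B_2\not\cong B_1$, Theorem~\ref{theorem 2.8} gives $\rho(A_f(B_2))<\rho(A_f(B_1))$, completing the three comparisons.

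I expect this base-changing step for $B_2$ to be the main obstacle: the point is to pinpoint an ordered pair of vertices whose private-neighbour set is a single edge bridging the two triangles, so that the bowtie collapses to a diamond while the whole pendant bundle at $c$ is preserved. The remaining bookkeeping is routine, and the small-order case $n=6$ needs no separate treatment: the four graphs stay pairwise non-isomorphic (now with two pendants), and the same three Kelmans operations still land on $B_1$, so the argument drawn for $n\ge 7$ specializes without change.
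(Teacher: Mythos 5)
Your proposal is correct and follows essentially the same route as the paper: Theorem~\ref{theorem 5.1} reduces the problem to the four graphs $B_{1},B_{2},B_{3},B_{4}$, and Theorem~\ref{theorem 2.8} (Kelmans operations) settles the pairwise comparisons in favour of $B_{1}$. The paper states this in two lines without exhibiting the operations, whereas you make them explicit (including the bowtie-to-diamond operation turning $B_{2}$ into $B_{1}$), which is exactly the detail the paper leaves implicit; the only slight inaccuracy is the parenthetical claim that Theorem~\ref{theorem 2.10} applies ``verbatim'' to $B_{4}$, where the configuration $F$ is not met since one private neighbour is a non-pendant tip, but your main argument via Theorem~\ref{theorem 2.8} does not rely on it.
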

\begin{proof}
	Among the bicyclic graphs $B_{1}$, $B_{2}$, $B_{3}$ and $B_{4}$, according to Theorem \ref{theorem 2.8}, we get $B_{1}$ with the largest spectral radius of weighted adjacency matrices with property $P^{\ast}$. Incorporate with Theorem \ref{theorem 5.1}, we have thus proved the theorem.
\end{proof}

Next, we will think over the unique bicyclic graphs with the first three maximum $p$-Sombor spectral radii.

\begin{lemma}\label{lemma 5.1}
	$B_{3}$, $B_{4}$ are the bicyclic graphs of order $n\geq6$ as shown in Figure \ref{fig3}. Then we have $\displaystyle \rho(\textbf{S}_{\textbf{p}}(B_{4}))<\rho(\textbf{S}_{\textbf{p}}(B_{3}))$.
\end{lemma}
\begin{proof} In the proof, we meditate the following two cases.
	
	$\textbf{Case 1.}$ $n\geq10$.
	
	Let the vertices of $B_{4}$ be denoted as Figure \ref{fig1055} (left).
	\begin{figure}[H]
		\centering
		\includegraphics[width=8cm]{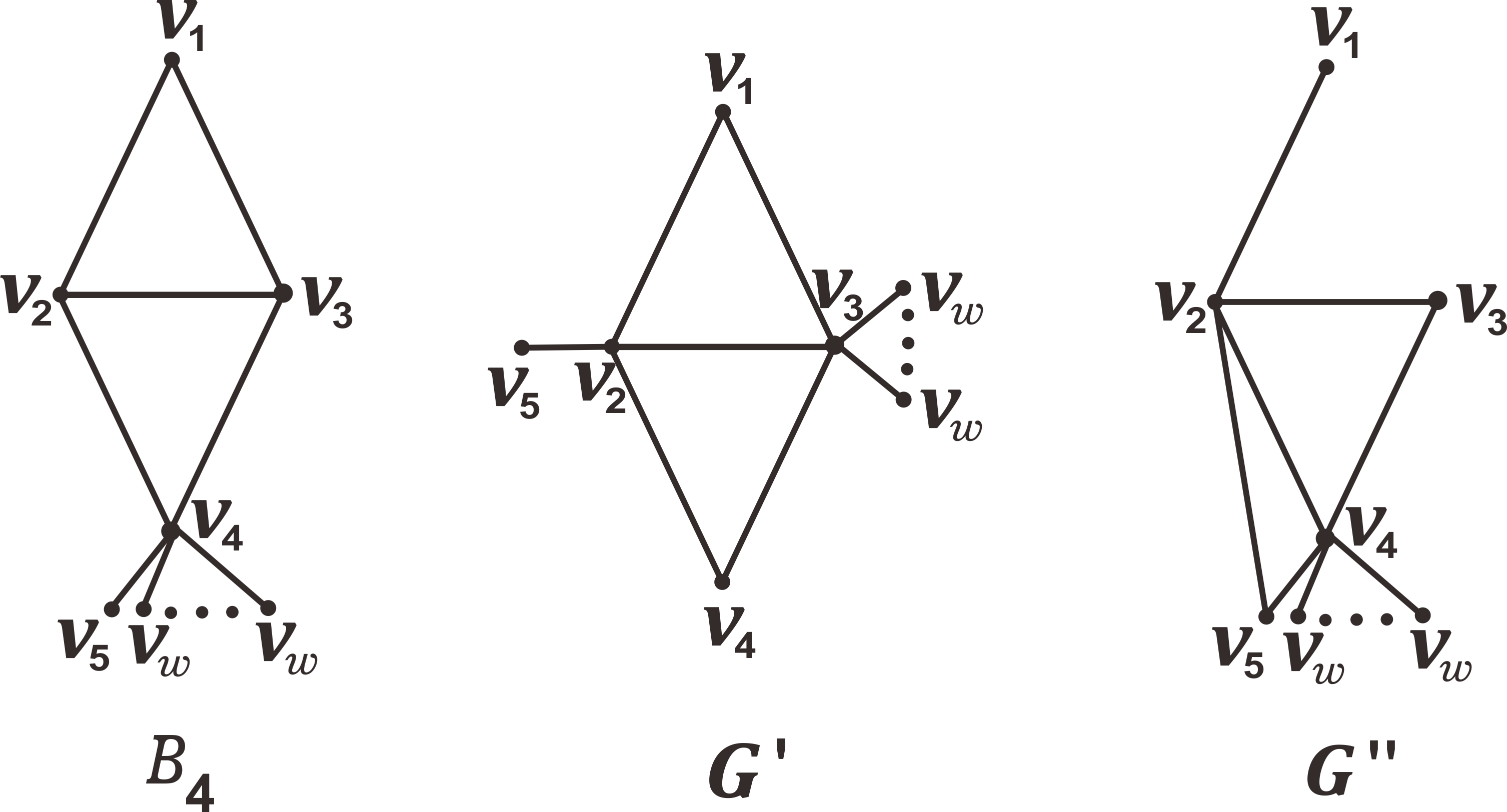}
		\caption{\small  The bicyclic graphs $B_{4}$, $G'$ and $G''$ for Lemma \ref{lemma 5.1}.}
		\label{fig1055}
	\end{figure}
	
	Suppose that $\textbf{x}$ is the principal eigenvector of $B_{4}$.
	Thanks to $\textbf{S}_{\textbf{p}}(B_{4})\textbf{x}=\rho(\textbf{S}_{\textbf{p}}(B_{4}))\textbf{x}$, the entries corresponding to the pendent vertices $v_{w}$ and $v_{5}$ in $\textbf{x}$ are equal. We get
	$$\begin{aligned}
		&\rho(\textbf{S}_{\textbf{p}}(B_{4}))x_{1}=2f(2,3,p)x_{2},\\
		& \rho(\textbf{S}_{\textbf{p}}(B_{4}))x_{2}=f(2,3,p)x_{1}+f(3,3,p)x_{2}+f(3,n-2,p)x_{4},\\
		& \rho(\textbf{S}_{\textbf{p}}(B_{4}))x_{4}=2f(3,n-2,p)x_{2}+(n-4)f(1,n-2,p)x_{5},\\
		& \rho(\textbf{S}_{\textbf{p}}(B_{4}))x_{5}=f(1,n-2,p)x_{4}.
	\end{aligned}$$
	We consider the following two cases.
	
	\textbf{Subcase 1.1.} $\displaystyle x_{1}>x_{5}$.
	
	Since $\displaystyle f(1,n-2,p)>2f(2,3,p)$ for $n\geq10$, we obtain if $x_{1}>x_{5}$, then $x_{2}>x_{4}$. We have
	$x_{2}=x_{3}>x_{4}>x_{1}>x_{5}$. Replace edge $v_{4}v_{5}$ in $B_{4}$ by a new edge
	$v_{2}v_{5}$ and $v_{4}v_{w}$ by a new edge $v_{3}v_{w}$ for $n-5$ pendent vertices $v_{w}\in N(v_{4})$ to obtain a new
	graph $G'\cong B_{3}$ as shown in Figure \ref{fig1055} (centre). Then we have
	$$\begin{aligned}
		&\quad
		\textbf{x}^{\top}\textbf{S}_{\textbf{p}}(G')\textbf{x}-\textbf{x}^{\top}\textbf{S}_{\textbf{p}}(B_{4})\textbf{x}=\\
		& 2\big[f(2,4,p)-f(2,3,p)\big]x_{1}x_{2}+2\big[f(2,n-2,p)-f(2,3,p)\big]x_{1}x_{3}\\
		+&2\big[f(1,4,p)x_{2}x_{5}-f(1,n-2,p)x_{4}x_{5}\big]\\
		+&2\big[f(4,n-2,p)-f(3,3,p)\big]x_{2}x_{3}+2\big[f(2,4,p)-f(3,n-2,p)\big]x_{2}x_{4}\\
		+&2\big[f(2,n-2,p)-f(3,n-2,p)\big]x_{3}x_{4}\\
		+&2(n-5)\big[f(1,n-2,p)x_{3}x_{w}-f(1,n-2,p)x_{4}x_{w}\big]\\
		>&2\big[f(2,4,p)-f(2,3,p)+f(2,n-2,p)-f(2,3,p)+f(1,4,p)-f(1,n-2,p)\big]x_{2}x_{5}\\
		+&2\big[f(4,n-2,p)-f(3,3,p)+f(2,4,p)-f(3,n-2,p)+f(2,n-2,p)\\
		-&f(3,n-2,p)\big]x_{2}x_{3}+2(n-5)\big[f(1,n-2,p)x_{3}x_{w}-f(1,n-2,p)x_{4}x_{w}\big]>0.
	\end{aligned}$$
	Thus we have $\displaystyle \rho(\textbf{S}_{\textbf{p}}(B_{4}))<\rho(\textbf{S}_{\textbf{p}}(B_{3}))$.
	
	\textbf{Subcase 1.2.} $\displaystyle x_{1}\leq x_{5}$.
	
	We replace edge $v_{1}v_{3}$ in $B_{4}$ by a new
	edge $v_{2}v_{5}$ to obtain a new graph $G''\cong B_{3}$ as shown in Figure \ref{fig1055} (right). Then we have
	$$\begin{aligned}
		&\quad
		\textbf{x}^{\top}\textbf{S}_{\textbf{p}}(G'')\textbf{x}-\textbf{x}^{\top}\textbf{S}_{\textbf{p}}(B_{4})\textbf{x}=\\
		& 2\big[f(1,4,p)-f(2,3,p)\big]x_{1}x_{2}+2\big[f(2,4,p)-f(3,3,p)\big]x_{2}x_{3}\\
		+&2\big[f(2,n-2,p)-f(3,n-2,p)\big]x_{3}x_{4}+2\big[f(4,n-2,p)-f(3,n-2,p)\big]x_{2}x_{4}\\
		+&2\big[f(2,n-2,p)-f(1,n-2,p)\big]x_{4}x_{5}+2\big[f(2,4,p)x_{2}x_{5}-f(2,3,p)x_{1}x_{3}\big]>0.
	\end{aligned}$$
	Thus we obtain $\displaystyle \rho(\textbf{S}_{\textbf{p}}(B_{4}))<\rho(\textbf{S}_{\textbf{p}}(B_{3}))$ for $n\geq10$.
	
	$\textbf{Case 2.}$ $6\leq n\leq9$.
	
	By Lemma \ref{lemma 2.1} and under the aid of MATLAB, we also can get $\displaystyle \rho(\textbf{S}_{\textbf{p}}(B_{4}))<\rho(\textbf{S}_{\textbf{p}}(B_{3}))$ for $6\leq n\leq9$.
	The proof of this result is quite similar to that given for Case 2 of Lemma \ref{lemma 4.1}.
	
	We have thus proved the lemma.
\end{proof}

\begin{lemma}\label{lemma 5.2}
	$B_{3}$ and $B_{2}$ are the bicyclic graphs of order $n\geq6$ as shown in Figure \ref{fig3}. Then we have $\displaystyle
	\rho(\textbf{S}_{\textbf{p}}(B_{3}))<\rho(\textbf{S}_{\textbf{p}}(B_{2}))$.
\end{lemma}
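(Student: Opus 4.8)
The plan is to bound $\rho(\textbf{S}_{\textbf{p}}(B_{2}))$ from below and $\rho(\textbf{S}_{\textbf{p}}(B_{3}))$ from above and then compare the two bounds, following the scheme of Lemma \ref{lemma 4.2} rather than an edge-moving argument (the latter is awkward here since $B_{2}\in\mathcal{B}_{2}(n)$ and $B_{3}\in\mathcal{B}_{1}(n)$ have genuinely different bases). The governing structural fact is that in $B_{2}$ the two triangles meet in a single vertex $u$ carrying all $n-5$ remaining pendants, so $d_{u}=n-1$, whereas in $B_{3}$ the two triangles share an edge and the pendants hang from its two ends, giving maximum degree $n-2$; this unit gap in the dominant degree is exactly what forces $\rho(\textbf{S}_{\textbf{p}}(B_{3}))<\rho(\textbf{S}_{\textbf{p}}(B_{2}))$. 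Both graphs possess an obvious equitable partition (center / cycle-vertices / pendants for $B_{2}$, and the analogous refinement for $B_{3}$), so by Theorem \ref{theorem 2.4} each $p$-Sombor spectral radius equals the spectral radius of a small quotient matrix.

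For the lower bound I would write the $3\times 3$ quotient matrix of $B_{2}$ with entries built from $a=f(n-1,2,p)$, $b=f(n-1,1,p)$ and $c=f(2,2,p)$, whose characteristic polynomial is $P(x)=x^{3}-cx^{2}-\big[4a^{2}+(n-5)b^{2}\big]x+(n-5)b^{2}c$. Evaluating at $s=\sqrt{4a^{2}+(n-5)b^{2}}$ gives $P(s)=-4a^{2}c<0$, and since $P(x)\to+\infty$ the largest root exceeds $s$; hence $\rho(\textbf{S}_{\textbf{p}}(B_{2}))^{2}>4f^{2}(n-1,2,p)+(n-5)f^{2}(n-1,1,p)$. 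This is exactly the polynomial trick used for $U_{3}$ in Lemma \ref{lemma 4.2}, and its $p\to\infty$ value equals $(n-1)^{2}(n-1)=(n-1)^{3}$.

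For the upper bound I cannot use Theorem \ref{theorem 2.7}, which applies only to unicyclic graphs while $B_{3}$ is bicyclic. Instead I would use $\rho(A)^{2}=\rho(A^{2})\le\max_{i}\sum_{k}A_{ik}R_{k}$, the maximum row sum of $(\textbf{S}_{\textbf{p}}(B_{3}))^{2}$, where $R_{k}$ is the $k$-th row sum of $\textbf{S}_{\textbf{p}}(B_{3})$. A short computation shows the maximum is attained at one of the high-row-sum vertices (the center, the adjacent degree-$4$ vertex, or a pendant at the center), and in each case the bound has dominant term of order $(n-5)\,f(4,n-2,p)\,f(n-2,1,p)$; at $p=2$ the $n^{3}$ and $n^{2}$ coefficients coincide for all three candidates and give $n^{3}-6n^{2}+O(n)=(n-2)^{3}+O(n)$.

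It then remains to compare. Using the monotonicity in $p$ from Lemma \ref{lemma 2.1} (every entry $f(\cdot,\cdot,p)$, hence every $R_{k}$ and every $\rho(\textbf{S}_{\textbf{p}}(\cdot))$, is decreasing in $p$), I would replace the lower bound by its $p\to\infty$ value $(n-1)^{3}$ and the upper bound by its $p=2$ value $(n-2)^{3}+O(n)$; the gap $(n-1)^{3}-(n-2)^{3}=3n^{2}-9n+7$ then dominates the $O(n)$ correction, yielding the claim for all $p\ge 2$ once $n\ge n_{0}$ for an explicit threshold $n_{0}$. For the finitely many small orders $6\le n<n_{0}$ I would, exactly as in Case~2 of Lemma \ref{lemma 4.1}, verify with MATLAB that $\rho(\textbf{S}_{\textbf{p=2}}(B_{3}))<\lim_{p\to\infty}\rho(\textbf{S}_{\textbf{p}}(B_{2}))$ and conclude $\rho(\textbf{S}_{\textbf{p}}(B_{3}))<\rho(\textbf{S}_{\textbf{p}}(B_{2}))$ for all $p\ge 2$ by Lemma \ref{lemma 2.1}. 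The main obstacle is the upper bound on $\rho(\textbf{S}_{\textbf{p}}(B_{3}))$: the crude maximum-row-sum bound is of order $n^{2}$, far above the true value of order $n^{3/2}$, so one is forced to pass to $A^{2}$ and then to control the several competing maximizing vertices together with their $O(n^{2})$ lower-order terms carefully enough to pin down $n_{0}$.
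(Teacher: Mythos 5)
Your plan is correct and would prove the lemma, but the decisive step is carried out by a genuinely different mechanism than the paper's. The skeleton is the same: both you and the paper lower-bound $\rho(\textbf{S}_{\textbf{p}}(B_{2}))$ through the $3\times3$ equitable-partition quotient matrix (your evaluation $P(s)=-4a^{2}c<0$ at $s=\sqrt{4a^{2}+(n-5)b^{2}}$ is a cleaner packaging of exactly the information the paper uses), both invoke Lemma \ref{lemma 2.1} to pass between $p=2$ and $p\to\infty$, and both finish the finitely many small orders by MATLAB. The difference is the upper bound on $\rho(\textbf{S}_{\textbf{p}}(B_{3}))$: the paper never squares the matrix; it uses the entrywise bound $\textbf{S}_{\textbf{p}}(B_{3})\leq f(4,n-2,p)\,A(B_{3})$, computes the \emph{unweighted} adjacency radius via a second (quartic) quotient polynomial $P'(x,B_{3})=x^{4}-(n+1)x^{2}-4x+3n-13$ to get $\rho(A(B_{3}))<\sqrt{n}$ for $n\geq18$, and then shows $P(\sqrt{n}f(4,n-2,p),B_{2})<0$. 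You instead stay with the weighted matrix at $p=2$ and bound the row sums of its square. What each buys: the paper's bound is easier to certify (one quartic, one sign check) but weaker, since $nf^{2}(4,n-2,2)=(n-2)^{3}+2(n-2)^{2}+O(n)$, which is why its threshold is $n\geq18$ and MATLAB must cover $6\leq n\leq17$; your bound $(n-2)^{3}+O(n)$ is tighter and would yield a smaller threshold $n_{0}$, at the cost of having to control \emph{every} row type of $(\textbf{S}_{\textbf{2}}(B_{3}))^{2}$ with explicit constants. On that point, one small imprecision: the maximizing row is not confined to your three candidates — the degree-2 triangle vertices $v_{1},v_{4}$ also produce $f(2,n-2,2)R_{3}=(n-2)^{3}+O(n)$ — so a complete write-up must bound all six row types, though this does not change the conclusion since every row is $(n-2)^{3}+O(n)$ or smaller, and the gap $(n-1)^{3}-(n-2)^{3}=3n^{2}-9n+7$ comfortably absorbs the linear error once the constants are pinned down.
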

\begin{proof} The proof of the lemma is divided into the following two cases.
	
	$\textbf{Case 1.}$ $n\geq18$.
	
	Let the vertices of $B_{2}$ be denoted as Figure \ref{fig231201}.
	\begin{figure}[H]
		\centering
		\includegraphics[width=3cm]{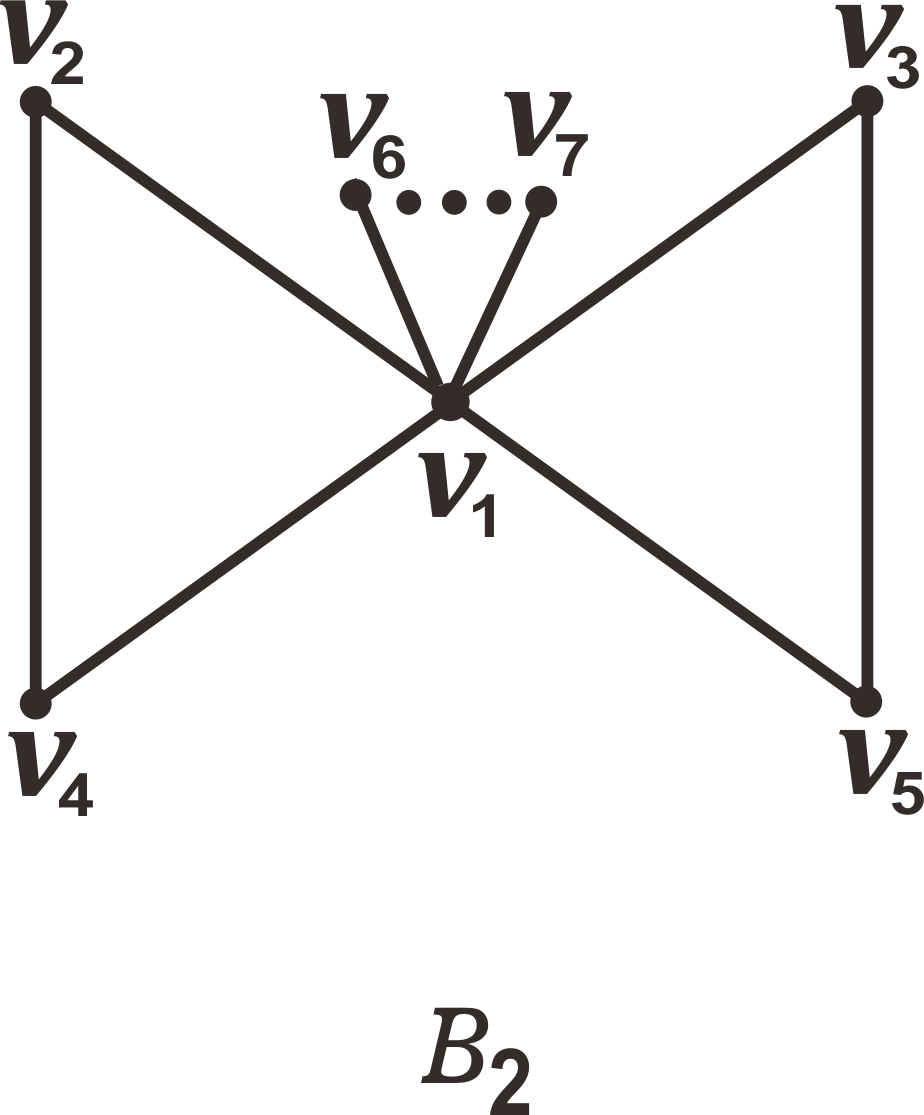}
		\caption{\small  The bicyclic graph $B_{2}$ of order $n$.}
		\label{fig231201}
	\end{figure}
	We denote the quotient matrix of the equitable partition $\{\{v_{1}\}, \{v_{2},v_{3},v_{4},v_{5}\},$
	
	\noindent $\{v_{6},v_{7},\ldots,v_{n}\}\}$ of $\textbf{S}_{\textbf{p}}(B_{2})$ as
	\begin{equation*}
		Q=\begin{pmatrix}
			0 & 4f(2,n-1,p) & (n-5)f(1,n-1,p)\\
			f(2,n-1,p) & f(2,2,p) & 0 \\
			f(1,n-1,p) & 0 & 0\\
		\end{pmatrix}.
	\end{equation*}
	Let $\displaystyle P(x, B_{2})$ be the characteristic polynomial of the quotient matrix $Q$.
	We can obtain
	\begin{eqnarray*}
		P(x, B_{2})&=&x^{3}-f(2,2,p)x^{2}-\big[4f^{2}(2,n-1,p)+(n-5)f^{2}(1,n-1,p)\big]x\\
		&&+(n-5)f(2,2,p)f^{2}(1,n-1,p).
	\end{eqnarray*}
	
	Similarly, let $\displaystyle P'(x, B_{3})$ be the characteristic polynomial of the quotient matrix of an equitable partition of the adjacency matrix
	$A(B_{3})$, we have
	
	\begin{center}
		$\displaystyle P'(x, B_{3})=x^{4}-(n+1)x^{2}-4x+3n-13$.
	\end{center}
	
	By calculation, we obtain $\displaystyle \rho(B_{3})<\sqrt{n}$ for $n\geq18$. Thus $\displaystyle
	\rho(\textbf{S}_{\textbf{p}}(B_{3}))<\sqrt{n}f(4,n-2,p)$.
	It is easy to prove
	$$\begin{aligned}
		&\displaystyle  P(\sqrt{n}f(4,n-2,p), B_{2})\\
		=&n\sqrt{n}f^{3}(4,n-2,p)-nf(2,2,p)f^{2}(4,n-2,p)\\
		-&\sqrt{n}f(4,n-2,p)\big[4f^{2}(2,n-1,p)+(n-5)f^{2}(1,n-1,p)\big]+(n-5)f(2,2,p)f^{2}(1,n-1,p)\\
		=&\sqrt{n}f(4,n-2,p)\big[nf^{2}(4,n-2,p)-4f^{2}(2,n-1,p)-(n-5)f^{2}(1,n-1,p)\big]\\
		+&f(2,2,p)\big[(n-5)f^{2}(1,n-1,p)-nf^{2}(4,n-2,p)\big]\\
		\leq&\sqrt{n}f(4,n-2,p)\big\{n[16+(n-2)^{2}]-4(n-1)^{2}-(n-5)(n-1)^{2}\big\}\\
		+&f(2,2,p)\big\{(n-5)[1+(n-1)^{2}]-n(n-2)^{2}\big\} \ (\textrm{by} \ \textrm{Lemma} \ \ref{lemma 2.1})\\
		=&\sqrt{n}f(4,n-2,p)(-n^{2}+17n+1)+f(2,2,p)(-3n^{2}+8n-10)<0
	\end{aligned}$$
	for $n\geq18$.
	
	Then $\displaystyle
	\rho(\textbf{S}_{\textbf{p}}(B_{3}))<\rho(\textbf{S}_{\textbf{p}}(B_{2}))$ for $n\geq18$.
	
	$\textbf{Case 2.}$  $6\leq n\leq17$.
	
	Analogously, with the aid of MATLAB and by Lemma \ref{lemma 2.1}, we have $\displaystyle \rho(\textbf{S}_{\textbf{p}}(B_{3}))<\rho(\textbf{S}_{\textbf{p}}(B_{2}))$ for $6\leq n\leq17$. The proof of this result is quite similar to that given for Case 2 of Lemma \ref{lemma 4.1}.
	
	This completes the proof.
\end{proof}

\begin{theorem}\label{theorem 5.2}
	Among all bicyclic graphs of order $n\geq6$, $B_{1}$, $B_{2}$ and $B_{3}$ (see Figure \ref{fig3}) are,
	respectively, the unique bicyclic graphs with the first three maximum $p$-Sombor spectral radii.
\end{theorem}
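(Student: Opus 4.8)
The plan is to establish the ordering $\rho(\textbf{S}_{\textbf{p}}(B_3))<\rho(\textbf{S}_{\textbf{p}}(B_2))<\rho(\textbf{S}_{\textbf{p}}(B_1))$ among the three candidate graphs and then use Theorem \ref{theorem 5.1} to rule out all other bicyclic graphs. The theorem then follows by a short combination of the pieces already assembled. Concretely, I would argue as follows.

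\textbf{Step 1: Reduce to the four candidates.} By Theorem \ref{theorem 5.1}, any bicyclic graph $G$ of order $n\geq 6$ not isomorphic to $B_1$, $B_2$, $B_3$, $B_4$ satisfies $\rho(A_f(G))<\rho(A_f(B_3))$. Since the $p$-Sombor matrix with $p\geq 2$ is a weighted adjacency matrix with property $P^\ast$, the same inequality holds for $\rho(\textbf{S}_{\textbf{p}}(\cdot))$. Hence every non-candidate graph has $p$-Sombor spectral radius strictly below that of $B_3$, so it cannot be among the top three. It remains only to order $B_1,B_2,B_3,B_4$.

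\textbf{Step 2: Order the four candidates.} Lemma \ref{lemma 5.1} already gives $\rho(\textbf{S}_{\textbf{p}}(B_4))<\rho(\textbf{S}_{\textbf{p}}(B_3))$, and Lemma \ref{lemma 5.2} gives $\rho(\textbf{S}_{\textbf{p}}(B_3))<\rho(\textbf{S}_{\textbf{p}}(B_2))$. The one comparison not yet recorded is $\rho(\textbf{S}_{\textbf{p}}(B_2))<\rho(\textbf{S}_{\textbf{p}}(B_1))$. For this I would apply Theorem \ref{theorem 2.8}: since $B_2$ can be transformed into $B_1$ by a single Kelmans operation (moving the pendent structure so as to concentrate the two triangles at one high-degree vertex), and $B_2\ncong B_1$, the strict inequality $\rho(\textbf{S}_{\textbf{p}}(B_2))<\rho(\textbf{S}_{\textbf{p}}(B_1))$ follows immediately. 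Chaining these three strict inequalities yields
\begin{center}
$\rho(\textbf{S}_{\textbf{p}}(B_4))<\rho(\textbf{S}_{\textbf{p}}(B_3))<\rho(\textbf{S}_{\textbf{p}}(B_2))<\rho(\textbf{S}_{\textbf{p}}(B_1))$.
\end{center}

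\textbf{Step 3: Conclude.} Combining Step 1 and Step 2, the three largest $p$-Sombor spectral radii among all bicyclic graphs of order $n\geq 6$ are achieved uniquely by $B_1$, $B_2$, $B_3$ in that order, which is the assertion of the theorem. I expect the genuine mathematical content to reside entirely in Lemmas \ref{lemma 5.1} and \ref{lemma 5.2} (which are already proved) and in Theorem \ref{theorem 5.1}; the present theorem is essentially a bookkeeping argument. The only point requiring care is verifying that $B_2\to B_1$ is realizable by a legitimate Kelmans operation so that Theorem \ref{theorem 2.8} applies, and that $B_4$ genuinely sits strictly below $B_3$ (supplied by Lemma \ref{lemma 5.1}) so it does not intrude into the top three. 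No separate small-order case analysis is needed here, since all constituent results already hold for every $n\geq 6$.
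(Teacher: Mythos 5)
Your proposal is correct and follows essentially the same route as the paper: the paper's proof likewise obtains $\rho(\textbf{S}_{\textbf{p}}(B_{2}))<\rho(\textbf{S}_{\textbf{p}}(B_{1}))$ from Theorem \ref{theorem 2.8} and then combines Theorem \ref{theorem 5.1} with Lemmas \ref{lemma 5.1} and \ref{lemma 5.2} to place $B_{1}$, $B_{2}$, $B_{3}$ as the unique top three. The theorem is indeed just the bookkeeping step you describe, with all the substantive work residing in those previously established results.
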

\begin{proof} By Theorem \ref{theorem 2.8}, we have $\displaystyle
	\rho(\textbf{S}_{\textbf{p}}(B_{2}))<\rho(\textbf{S}_{\textbf{p}}(B_{1}))$.
	Then according to Theorem \ref{theorem 5.1} and Lemmas \ref{lemma 5.1} and \ref{lemma 5.2}, it is easy to see $B_{1}$,
	$B_{2}$ and $B_{3}$ are, respectively, the unique bicyclic graphs with the first three
	maximum $p$-Sombor spectral radii for $n\geq6$.
\end{proof}

\noindent  \textbf{Remark 5.1.} For $n=5$, we have $B_{1}\cong B_{3}$. Under the help of MATLAB and by Theorem \ref{theorem 2.8}, we get $B_{1}$ is the unique bicyclic graph with the largest $p$-Sombor spectral radius, the bicyclic graphs with the secondly and thirdly largest $p$-Sombor spectral radii depend on the choice of parameter $p$ and they are $B_{2}$ and $B_{4}$.

\section{Conclusions}
\noindent
In this paper, among all trees of order $n\geq6$, we obtain $S_{n}$, $S_{2,n-2}$, $S_{3,n-3}$ are, respectively, the unique trees with the first three maximum spectral radii for the weighted adjacency matrix with property $P^{\ast}$. In addition, we get
\begin{center}
	$\rho(A_{f}(P_{4}))=\rho(A_{f}(S_{2,2}))<\rho(A_{f}(S_{4}))$ for $n=4$
\end{center}
\noindent and
\begin{center}
	$\rho(A_{f}(P_{5}))<\rho(A_{f}(S_{2,3}))<\rho(A_{f}(S_{5}))$ for $n=5$.
\end{center}
The result also holds for the $p$-Sombor matrix with $p\geq1$.

Among all unicyclic graphs of order $n\geq7$, we get $S_{n}+e$, $U_{1}$, $U_{2}$, $U_{3}$ and $U_{4}$ are, respectively, the unique unicyclic graphs with the first five maximum $p$-Sombor spectral radii for $p\geq2$.
For $n=6$, we
get $S_{6}+e$, $U_{1}$ and $U_{2}$ are, respectively, the unique unicyclic
graphs with the first three maximum $p$-Sombor spectral radii and the unicyclic
graph with the fourthly largest $p$-Sombor spectral radius depends on the choice of parameter $p$.
Furthermore, we obtain
\begin{center}
	$\rho(\textbf{S}_{\textbf{p}}(C_{5}))<\rho(\textbf{S}_{\textbf{p}}(U_{3}))<\rho(\textbf{S}_{\textbf{p}}(U_{2}))<\rho(\textbf{S}_{\textbf{p}}(U_{1}))<\rho(\textbf{S}_{\textbf{p}}(S_{5}+e))$
\end{center}
\noindent for $n=5$.

Moreover, among all bicyclic graphs of order $n\geq6$, we obtain $B_{1}$ is the unique bicyclic graph with the largest spectral radius of the weighted adjacency matrices with property $P^{\ast}$. In addition, $B_{1}$, $B_{2}$ and $B_{3}$ are, respectively, the unique bicyclic graphs with the first three maximum $p$-Sombor spectral radii for $p\geq2$. For $n=5$, we have $B_{1}\cong B_{3}$ is the unique bicyclic graph with the largest $p$-Sombor spectral radius, the bicyclic graphs with the secondly and thirdly largest $p$-Sombor spectral radii depend on the choice of parameter $p$ and they are $B_{2}$ and $B_{4}$.

In Appendix 1, we give an algorithm to obtain the extremal graphs with the first three minimum Sombor spectral radii among all graphs with order $n$ and size $m$. For example, we get $B'(n,1)$, $B'(n,2)$ and $B'(n,3)$ (see Figure \ref{fig1060}) are, respectively, the bicyclic graphs with the first three minimum Sombor spectral radii for $n=5,6,7$, which dose not show much sign of regularity. We find the problem for the first three minimum Sombor spectral radii of these three classes of graphs in Problem \ref{problem 1.1} is hard for us. The authors have obtained $P_{n}$ (resp. $C_{n}$) is the unique tree (resp. unicyclic graph) with the smallest spectral radius for the weighted adjacency matrix with property $P^{\ast}$ in \cite{107}. Thus we are interested in the following problems. Determine the unique bicyclic graph with the smallest spectral radius for the weighted adjacency matrix with property $P^{\ast}$ among all bicyclic graphs of order $n$. Consider the extremal graph with respect to the spectral radius of weighted adjacency matrix with property $P^{\ast}$ in the set of graphs with prescribed degree sequence.

\begin{figure}[H]
	\centering
	\includegraphics[width=9cm]{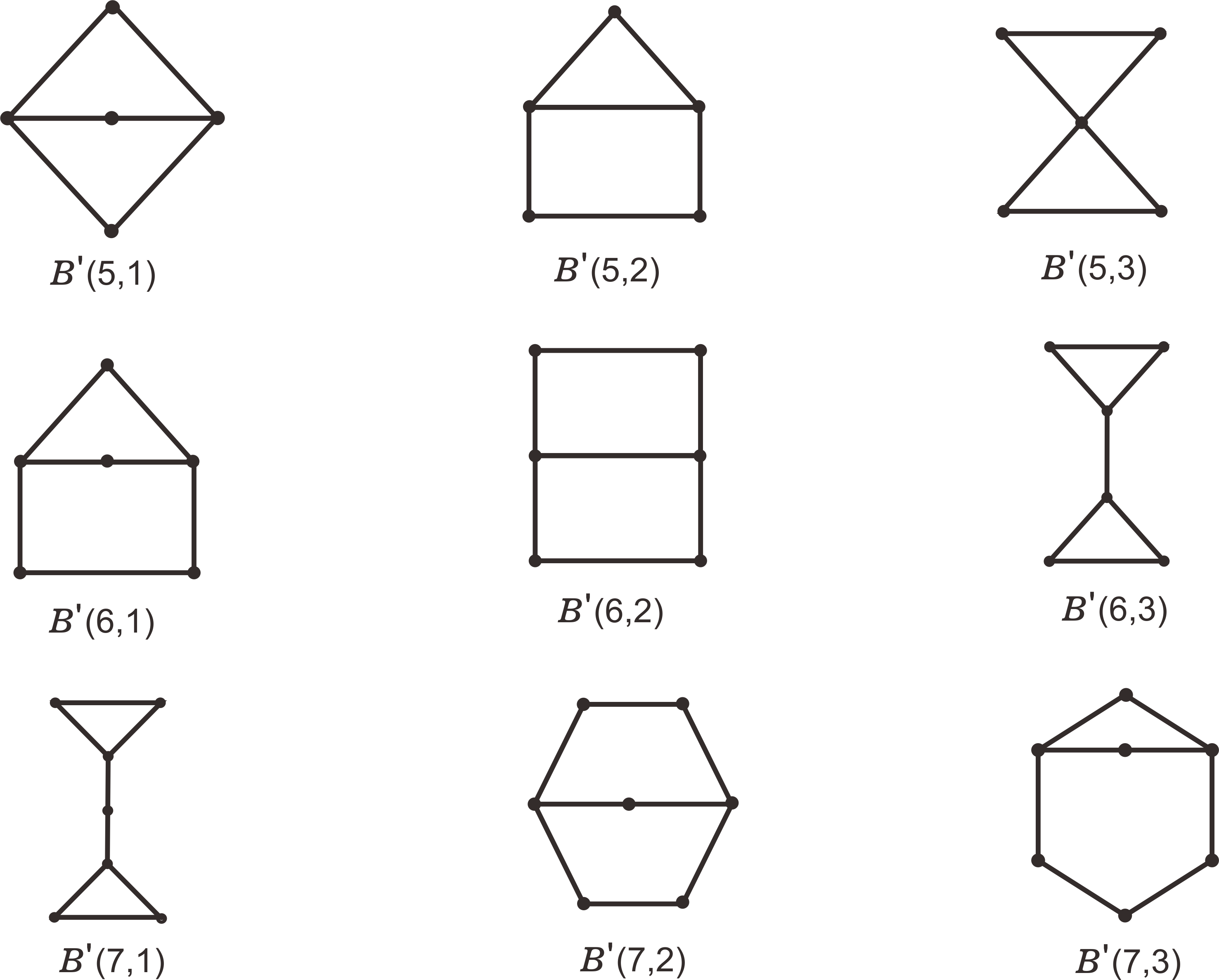}
	\caption{\small  The bicyclic graphs $B'(n,1)$, $B'(n,2)$ and $B'(n,3)$ for $n=5,6,7$.}
	\label{fig1060}
\end{figure}

\section*{Acknowledgements}
\noindent
This work is supported by NSFC (No. 12171402).

\vskip0.5cm

\bibliographystyle{abbrv}
\bibliography{References}

\newpage

\textbf{Appendix 1}\label{Appendix 1}

To find the extremal graphs with the first three minimum Sombor spectral radii among all graphs with order $n$ and size $m$, we present an algorithm which can be achieved by a computer program written in the MATLAB language.
For given $n$ and $m$, it is not difficult to construct recurrently all the graphs with order $n$ and
size $m$ which are non-isomorphic to each other by using graph theory toolbox in MATLAB. In the following algorithm, we assume that all graphs with order $n$ and size $m$ which are non-isomorphic to each other have been produced, and store them as a matrix $M$, where $(2i-1)$-th row and $2i$-th row of $M$ are edge-information-table (that is, EdgeTable in MATLAB) for $i=1,2,\ldots$.

\begin{algorithm}
	\scriptsize
	\SetAlgoLined
	\KwIn{The order $n$ and size $m$.}
	\KwOut{The extremal graphs with the first three minimum Sombor spectral radii among all graphs with order $n$ and
		size $m$. }
	Let $\mathcal{G}=\{G_1,G_2,\ldots,G_t\}$ be the set of graphs with $n$ and size $m$ which are non-isomorphic to each other. Let $S=\emptyset$.\\
	\For{i=1:t}{
		compute $S_p(G_i)$;\\
		compute $\rho(S_p(G_i))$;\\
		set $S=S\cup \rho(S_p(G_i))$.\\	
	}
		\For{s=1:3}{
      find the maximum element $s$ in $S$, that is, the maximum $p$-Sombor spectral radius among all graphs with order $n$ and
      size $m$;\\
      determine the graph $G$ with maximum $p$-Sombor spectral radius in $\mathcal{G}$;\\
      set $S=S\setminus \{s\}$ and $\mathcal{G}=\mathcal{G}\setminus \{G\}$.\\
		}

	\caption{Algorithm to obtain the extremal graphs with the first three minimum Sombor spectral radii among all
		graphs with order $n$ and size $m$.}
\end{algorithm}

\newpage
\begin{verbatim}
Numbergraphs=size(M,1);	
Spectralradius=[];

for k=1:Numbergraphs/2;
    edgetablerow1=M(2*k-1,:);
    edgetablerow2=M(2*k,:);
    Graph=graph(edgetablerow1,edgetablerow2);
    Degreeofvertexu=degree(Graph,edgetablerow1);
    Degreeofvertexv=degree(Graph,edgetablerow2);
    Weights=[(Degreeofvertexu.^(2)+Degreeofvertexv.^(2)).^(1/2)];
    Weightedgraph=graph(edgetablerow1,edgetablerow2,Weights);
    Weightedadjacency=adjacency(Weightedgraph,'weighted');
    Spectrum=eig(Weightedadjacency);
    Spectralradius(k)=max(abs(Spectrum));
end
for s=1:3;
    [Maxspec,Maxspecpposition]=max(Spectralradius);
    Maxgrow1=M(2*Maxspecpposition-1,:);
    Maxgrow2=M(2*Maxspecpposition,:);
    Maxgraph=graph(Maxgrow1,Maxgrow2);
    Maxgraphdegu=degree(Maxgraph,Maxgrow1);
    Maxgraphdegv=degree(Maxgraph,Maxgrow2);
    Maxgraphweights=[(Maxgraphdegu.^(2)+Maxgraphdegv.^(2)).^(1/2)];
    Maxggraphweighted=graph(Maxgrow1,Maxgrow2,Maxgraphweights);
    Spectralradius(Maxspecpposition)=0;
    figure
    plot(Maxggraphweighted,'EdgeLabel',Maxggraphweighted.Edges.Weight);
    hold off
end
\end{verbatim}

\end{document}